\newif\ifcomments
\numberwithin{equation}{section}
\theoremstyle{plain}
\newtheorem{theorem}{Theorem}[section]
\newtheorem{remark}[theorem]{Remark}
\newtheorem{lemma}[theorem]{Lemma}
\newtheorem{corollary}[theorem]{Corollary}
\newtheorem{proposition}[theorem]{Proposition}
\newtheorem{problem}[theorem]{Problem}
\theoremstyle{definition}
\newtheorem{definition}[theorem]{Definition}
\def\ed{\mathrm{d}}
\def\cone{{\mc{C}}}  
\def\fmap{h}  
\newcommand{\corr}[1]{#1}
\newcommand{\mres}{\mathbin{\vrule height 1.6ex depth 0pt width
0.13ex\vrule height 0.13ex depth 0pt width 1.3ex}}
\let\bs=\boldsymbol
\let\mb=\mathbb
\let\mc=\mathcal
\let\mf=\mathfrak
\let\mr=\mathrm
\begin{document}
\date{\today}

\title[Generalized solutions of the $H(\mr{div})$ geodesic problem]{Generalized compressible flows and solutions of the $H(\mathrm{div})$ geodesic problem}

\author{Thomas Gallou{\"e}t,  Andrea Natale and Fran\c{c}ois-Xavier Vialard}
\maketitle
\begin{abstract}
We study the geodesic problem on the group of diffeomorphism of a domain $M\subset \mathbb{R}^d$, equipped with the $H(\mr{div})$ metric. The geodesic equations coincide with the Camassa-Holm equation when $d=1$, and represent one of its possible multi-dimensional generalizations when $d>1$. We propose a relaxation \`a la Brenier of this problem, in which solutions are represented as probability measures on the space of continuous paths on the cone over $M$.  
We use this relaxation to prove that smooth $H(\mr{div})$ geodesics are globally length minimizing for short times. We also prove that there exists a unique pressure field associated to solutions of our relaxation.   
Finally, we propose a numerical scheme to construct generalized solutions on the cone 
and present some numerical results illustrating the relation between the generalized Camassa-Holm and incompressible Euler solutions.
\end{abstract}

\section{Introduction}

\corr{

The $H(\mr{div})$ minimizing geodesic problem on the group of diffeomorphisms of a compact domain in $\mathbb{R}^d$ can be stated as follows:
\begin{problem}[$H(\mr{div})$ geodesic problem]\label{prob:det}
Let $M$ be a compact domain in $\mathbb{R}^d$ and let $\mr{Diff}(M)$ be the group of smooth diffeomorphisms of $M$. Denote by $\rho_0$ the Lebesgue measure on $M$. Given  $\fmap\in \mr{Diff}(M)$, find a smooth curve $t\in[0,T] \mapsto \varphi_t \in \mr{Diff}(M)$ satisfying 
\begin{equation}\label{eq:boundaries}
\varphi_0 = \mr{Id}\,, \quad \varphi_T= h\,,
\end{equation}
and minimizing the action $\int_0^T l(u) \,\ed t$, with Lagrangian given by 
\begin{equation}\label{eq:ablag}
l(u)  =  \int_M  | u |^2\, \ed \rho_0 + \frac{1}{4} \int_M  |\mr{div}\,u|^2\,\ed \rho_0\,,
\end{equation}
where $u: [0,T] \times M \rightarrow \mathbb{R}^d$ is the Eulerian velocity field defined by the equation $\partial_t {\varphi}_t(\cdot) = u(t,\varphi_t(\cdot))$.
\end{problem} 


Michor and Mumford proved in \cite{michor2005vanishing} that the $H(\mr{div})$ Lagrangian $\eqref{eq:ablag}$ defines a non-vanishing distance on the diffeomorphism group, in contrast to the $L^2$ case for which the metric is degenerate (i.e., there exist non trivial maps $h$  for which the infimum of the action vanishes). 
Note also that in dimension $d\geq 2$ the distance induced by the $H^s$ metric vanishes if and only if $s<1$, as recently proved by Jerrard and Maor \cite{jerrard2018vanishing}. Local well-posedness and existence of $H(\mr{div})$ geodesics is guaranteed if $h$ is close to the identity $\mr{Id}$ in a sufficiently strong topology, due to Ebin and Marsden \cite{ebin1970groups}. 

In one dimension, the Lagrangian in \eqref{eq:ablag} is equivalent to the square of the $H^1$ norm, and if we replace $M$ by the real line, the Euler-Lagrange equations coincide with the Camassa-Holm (CH) equation. For the choice of coefficients in \eqref{eq:ablag} the CH equation reads as follows:
\begin{equation} \label{eq:ch1d}
\partial_{t} u - \frac{1}{4} \partial_{txx} u + 3  u\partial_x u - \frac{1}{2} \partial_{xx} u \partial_x u - \frac{1}{4}  u \partial_{xxx}u = 0\,.
\end{equation}
This equation was shown to model shallow water waves \cite{camassa1993integrable}, and in this context, the Lagrangian in \eqref{eq:ablag} yields the appropriate generalization to a higher dimensional domain in $\mathbb{R}^d$ \cite{kruse2001two}. The CH equation has been intensively studied in literature, mostly because it represents an example of bi-Hamiltonian and integrable equation \cite{fuchssteiner1981symplectic}, and its smooth solution blow up in finite time in a process known as wave breaking. Furthermore, even weak solutions cannot be defined globally \cite{molinet2004well}, their blow up being related to the emergence of non-injective Lagrangian maps.

Problem \ref{prob:det} was recentently reinterpreted as an $L^2$ geodesic problem on the cone over $M$ \cite{gallouet2017camassa}, establishing therefore a link with the incompressible Euler equations which share a similar structure, as shown in the pioneering work of Arnold \cite{arnold1966geometrie}. 

\subsection{Contributions}
In this paper we construct a relaxation of problem \ref{prob:det} inspired by Brenier's relaxation of the incompressible Euler equation.  We call the minimizers of such a relaxation \emph{generalized solutions}. This approach allows us to obtain several results on the $H(\mr{div})$ geodesic problem. In particular, we show that:
\begin{itemize}
\item if $M$ is convex, smooth $H(\mr{div})$ geodesics are globally length-minimizing for short times and in any dimension (theorem \ref{th:correspondence}). This result generalizes the one in  \cite{gallouet2017camassa}, which was only valid on the circle of unit radius $S^1_1$ and it was local otherwise;
\item on the torus $S^1\times S^1$, there exists $h\in\mr{Diff}(S^1\times S^1)$ such that the infimum of the action in problem \ref{prob:det} cannot be attained by any smooth flow (theorem \ref{th:rott2}); on the contrary, for the same $h$ there exists a generalized solution that arises as the limit of a minimizing sequence of smooth flows (theorem \ref{th:approxt2}); 
\item there exists a unique pressure field in the sense of distribution associated with generalized solutions (theorem \ref{th:existpres}). To the best of the authors' knowledge, the pressure field we consider is a variable that has not been studied before in the literature of the CH equation or the $H(\mr{div})$ geodesic problem (see remark \ref{rem:pres}). It appears however as a natural variable in the generalized setting.
\end{itemize}


\subsection{The $a$-$b$-$c$ metric} 
The Lagrangian in \eqref{eq:ablag}  is a particular instance of a class of right-invariant Lagrangians on the diffeomorphism group of $M$ considered in \cite{khesin2013geometry}, which for $d=3$ can be written  as
\begin{equation}\label{eq:ablagg}
l(u) = a \int_M  | u |^2\, \ed \rho_0 + b \int_M  |\mr{div}\,u|^2\,\ed \rho_0 +  c \int_M  |\mr{curl} \,u|^2\,\ed \rho_0\,,
\end{equation}
where $a,b,c$ are positive constants. Such Lagrangians give rise to several important nonlinear evolution equations, including the EPDiff equation for the $H^1$ Sobolev norm of vector fields and the Euler-$\alpha$ model \cite{holm1998euler,holm1998eulerpoi}, both of which have also been regarded as possible multi-dimensional versions of the CH equation. 

\subsection{The $\dot{H}^1$ metric and the Hunter-Saxton equation}
The Hunter-Saxton equation \cite{hunter1991dynamics, khesin2013geometry} corresponds to choosing $a=c=0$ in \eqref{eq:ablagg}, in which case the metric is denoted by $\dot{H}^1$, and in one dimension. Lenells provided a simple description of the solutions to this equation as geodesic flows on the infinite-dimensional sphere of $L^2$ functions with constant norm \cite{lenells2007hunter}. This was established by constructing an explicit isometry between the group of orientation-preserving diffeomorphism of the circle $S^1$ (modulo rotations) and a subset of the sphere, given by the map 
\begin{equation}\label{eq:isom}
 \varphi \mapsto \sqrt{\partial_x \varphi}\,.
\end{equation}
This geometric point of view was particularly fruitful and led to a number of important results, namely a bound on the diameter of the diffeomeorphism group endowed with the $\dot{H}^1$ metric; that its curvature is positive and constant; that geodesics are gobally length-minimizing. Lenell's interpretation still holds when the domain is a higher dimensional manifold, as showed in \cite{khesin2013geometry}, which allowed the authors to prove complete integrability of the geodesic equations and that all solutions blow up in finite time. 
The simplifications that arise for the Hunter-Saxton equation are related to the fact that the $\dot{H}^1$ descends to a non-degenerate metric on the space of densities via the isometry \eqref{eq:isom}. This however does not apply to the full $H^1$ metric or the $H(\mr{div})$ metric because of the presence of the transport term, given by the $L^2$ norm of the velocity.
   
\subsection{The $L^2$ metric and the incompressible Euler equations}
The $L^2$ metric was used by Arnold \cite{arnold1966geometrie} to interpret the solutions to the incompressible Euler equations as geodesic curves on the group of volume-perserving diffeomorphisms $\mr{Diff}_{\rho_0}(M)$. As for the $H(\mr{div})$ problem, the existence of length-minimizing geodesics is guaranteed a priori only in a sufficiently strong topology \cite{ebin1970groups}. In fact, Shnirelman proved that the infimum is generally not attained when $d\geq 3$ and that even when $d=2$ there exist final configurations $h$ which cannot be connected to the identity map with finite action \cite{shnirelman1994generalized}. This motivated Brenier to adopt an extrinsic approach, viewing 
\begin{equation}
\mr{Diff}_{\rho_0}(M) \subset \{ \varphi \in L^2(M;M)\,;\,\varphi_\# \rho_0 = \rho_0\}
\end{equation}
and reinterpreting incompressible flows 
as probability measures $\bs{\mu}$  on $\Omega(M)$, the space of continuous curves on the domain $\mr{x}: t\in[0,T] \rightarrow x_t \in M$, satisfying 
\begin{equation}\label{eq:isoteulergen}
(e_t)_\# \bs{\mu} = \rho_0\,,
\end{equation}
where $e_t: \Omega(M) \rightarrow M$ is the evaluation map at time $t$ defined by $e_t(\mr{x})=x_t$, and $\rho_0$ is normalized so that $\rho_0(M) =1$. In this interpretation, the marginals $(e_0,e_t)_\# \bs{\mu}$ are probability measures on $M\times M$ and describe how particles move and spread their mass across the domain. Classical deterministic solutions, i.e.\ curves of volume preserving diffeomorphisms $t \mapsto \varphi_t$, correspond to the case where the marginals $(e_0,e_t)_\# \bs{\mu}$ are concentrated on the graph of $\varphi_t$. Then, equation \eqref{eq:isoteulergen} is equivalent to the incompressibility constraint $\varphi_\# \rho_0 = \rho_0$. Note also that formulating the incompressibility via the push-forward of $\varphi$, we allow changes in orientation. The minimization problem in terms of generalized flows consists in minimizing the action
\begin{equation}
\int_{\Omega(M)} \int_0^T \frac{1}{2} |\dot{x}_t|^2 \,\ed t \,\ed \bs{\mu}(\mr{x})\,
\end{equation}
among generalized incompressible flows, with the constraint $(e_0,e_T)_\# \bs{\mu} = (\mr{Id},h)_\# \rho_0$.  
Brenier proved that smooth solutions correspond to the unique minimizers of the generalized problem for sufficiently small times, and are therefore globally length-minimizing \cite{brenier1989least}. On the other hand, for any coupling there exists a unique pressure, defined as a distribution (but that can actually be defined as a function \cite{ambrosio2008regularity}), associated with generalized solutions.



\subsection{The $H(\mr{div})$ metric as an $L^2$ cone metric}
The link between the incompressible Euler equation and the $H(\mr{div})$ geodesic problem was established in \cite{gallouet2017camassa}, where it was proven that  problem \ref{prob:det} can be reformulated as a geodesic problem for the $L^2$ cone metric (see equation \eqref{eq:l2cone}; see also section \ref{sec:conemetric} for the cone metric structure) on a subgroup of the diffeomorphism group of $M\times \mathbb{R}_{>0}$. More precisely, Lagrangian flows are represented by time dependent automorphisms on $M\times \mathbb{R}_{>0}$, i.e.\
maps in the form
\begin{equation}
  (x,r) \in M\times \mathbb{R}_{>0} \mapsto (\varphi(x), \lambda(x) r) \in M\times \mathbb{R}_{>0}\,,
\end{equation}
where $\varphi: M \rightarrow M$ and $\lambda: M \rightarrow \mathbb{R}_{>0}$, satisfying
\begin{equation}\label{eq:isotch}
\varphi_\# (\lambda^2 \rho_0) = \rho_0\,.
\end{equation}
This condition relates $\varphi$ and $\lambda$ by requiring $\lambda = \sqrt{|\mr{Jac}(\varphi)|}$. 
Importantly, in this picture we cannot capture the blow up of solutions as induced by peakon collisions, as in this case the Jacobian would locally vanish. In addition, the metric space $M\times \mathbb{R}_{>0}$ equipped with the cone metric is not complete. We are then led to work with the cone $\cone =  (M \times  \mathbb{R}_{\geq 0})/ (M \times  \{0\})$, which allows us to represent solutions with vanishing Jacobian by paths on the cone reaching the apex. 

Interestingly, the decoupling between the Lagrangian flow map and its Jacobian has also been used in \cite{lee2017global} to construct global weak solutions of the CH equation. However, in their case, one continues solutions after the blowup by allowing the square root of the Jacobian to become negative, which does not occur in the formulation described above.  

\subsection{Generalized compressible flows and unbalanced optimal transport}
By analogy with the incompressible Euler case,  we reformulate the $H(\mr{div})$ geodesic problem using generalized flows interpreted as probability measures $\bs{\mu}$ on the space $\Omega(\cone)$ of continuous paths on the cone $\mr{z}:t\in[0,T] \rightarrow z_t = [x_t,r_t] \in \cone$. Our relaxed formulation consists in minimizing the action
\begin{equation}
\int_{\Omega(\cone)} \int_0^T |\dot{z}_t|^2_{g_{\cone}} \,\ed t \,\ed \bs{\mu}(\mr{z})\,
\end{equation}
among generalized flows satisfying appropriate constraints enforcing a generalized version of \eqref{eq:isotch} and the coupling between initial and final times. Choosing the correct form for such constraints is not trivial. It is the first contribution of the paper to define a formulation that allows to prove existence of minimizers while retaining uniqueness for short time in the smooth setting.


It should be noted that the cone construction has been developed and used extensively in \cite{liero2018optimal,chizat2018unbalanced} in order to characterize the metric side of the Wasserstein-Fisher-Rao (WFR) distance (which is also called Hellinger-Kantorovich distance) on the space of positive Radon measures. In fact, as noted in \cite{gallouet2017camassa} this has the same relation to the CH equation as the Wasserstein $L^2$ distance does to the incompressible Euler equations. In the geodesic problem associated the WFR distance a relation similar to \eqref{eq:isotch} is used to prescribe the initial and final density. The resulting problem coincides with the so-called optimal entropy-transport problem, a widespread form of unbalanced optimal transport based on the Kullback-Leibler divergence \cite{chizat2018unbalanced,chizat2018interpolating,liero2018optimal}.

Taking advantage of the optimal transport point of view, we propose a numerical scheme based on multi-marginal optimal transport and entropic regularization \cite{cuturi2013sinkhorn,benamou2015iterative,benamou2017generalized} to simulate the solutions of problem \ref{prob:det}.

%
%
\subsection{Structure of the paper}
In section \ref{sec:notation}, we introduce the notations and the needed background. In section \ref{sec:variational}, we recall the $L^2$ variational formulation of the $H(\mr{div})$ geodesic problem.
\par
In section \ref{sec:generalized} we introduce our relaxation, for which we prove existence of solutions as generalized compressible flows.  
We also show that our generalized solutions can be decomposed into two parts, one of which involves directly the cone singularity. When this latter is not trivial, it implies the appearance and disappearance of mass in the domain; we refer to such minimizers as singular solutions.
\par
In section \ref{sec:existence} we prove that for any boundary conditions, there always exists a unique pressure field defined as a distribution on $(0,T)\times M$ associated with any given generalized solution. 
\par
In section \ref{sec:correspondence} we prove that smooth solutions of the $H(\mr{div})$ geodesic equations are the unique minimizers of our generalized model for sufficiently short times. This proves that such solutions are also globally length-minimizing on $\mr{Diff}(M)$.
\par
In section \ref{sec:examples} we show that for $d \geq 2$, singular solutions emerge naturally from the continuous formulation for appropriate (smooth) boundary conditions. This proves that the infimum of the action in problem \ref{prob:det} may not be attained. 
We construct approximations for such minimizers using 
a particular form of peakon collision which arises from the Hunter-Saxton equation.
\par
Finally, in section \ref{sec:discrete} we construct a numerical scheme based on entropic regularization and Sinkhorn algorithm to compute generalized $H(\mr{div})$ geodesics. 

}

\section{Notation and preliminaries}\label{sec:notation}

In this section, we describe the notation and some basic results used throughout the paper. Because of the similarities between our setting and the one of \cite{liero2018optimal}, we will adopt a similar notation for the cone construction and the measure theory objects we will employ. 

\subsection{Function spaces} 

Given two metric spaces $X$ and $Y$, we denote by $C^0(X;Y)$ the space of continuous functions $f:X\rightarrow Y$, by $C^0(X)$ the space of real-valued continuous functions $f:X\rightarrow \mathbb{R}$\corr{, and by $C^0_b(X)$ the subset of bounded functions $f\in C^0(X)$}. If $X$ is compact $C^0(X)$  is a Banach space with respect to the sup norm $\| \cdot\|_{C^0}$. The set of Lipschitz continuous function on $X$ is denoted by $C^{0,1}(X)$ and the associated seminorm and norm are given respectively by\corr{
\begin{equation}
|f|_{C^{0,1}} \coloneqq \sup_{x,y\in{X}, x\neq y} \frac{|f(x)-f(y)|}{ d_X(x,y)}\,, \quad \|f\|_{C^{0,1}} \coloneqq \|f\|_{C^0} + |f|_{C^{0,1}} \,, 
\end{equation}}
where $d_X$ denotes the distance function on $X$.

If $X$ {is a subset of $\mathbb{R}^d$}, we use standard notation for Sobolev spaces on $X$. In particular, $H(\mr{div}; X)$ or simply $H(\mr{div})$ denotes the space of $L^2$ vector fields $f:X \rightarrow \mathbb{R}^d$ whose divergence $\mr{div}(f)$ is in $L^2$, with squared norm given by $\|f\|^2_{L^2} +\|\mr{div}\, f\|^2_{L^2}$ (which is equivalent to \eqref{eq:ablag}). Moreover, we denote by $\mr{Diff}(X)$ the group of smooth diffeomorphisms of $X$.

\subsection{The cone and metric structures}\label{sec:conemetric} 
\corr{Throughout the paper, $M$ will denote the closure of an open bounded set in $\mathbb{R}^d$ with  Lipschitz  boundary}. Occasionally, we will also consider the case $M=S^1_R \coloneqq \mathbb{R}/{2\pi R}\mathbb{Z}$ the circle of radius $R$, or $M=T^2_{R_1,R_2} \coloneqq S^1_{R_1}\times S^1_{R_2}$ the torus with radii $R_1,R_2>0$.  We will denote by $g$ the Euclidean metric tensor on $M$ 
and with $| \cdot|$ the Euclidean norm. We denote by $\cone \coloneqq (M \times  \mathbb{R}_{\geq 0})/ (M \times  \{0\})$ the cone over $M$. A point on the cone is an equivalence class ${p}=[x,r]$, where the equivalence relation is given by 
\begin{equation}
(x_1,r_1) \sim (x_2,r_2) \Leftrightarrow (x_1,r_1) = (x_2, r_2)~\text{or}~r_1 = r_2 = 0\,. 
\end{equation} 
The distinguished point of the cone $[x,0]$ is the apex of $\cone$ and it is denoted by ${o}$.
Every point on the cone different from the apex can be identified with a couple $(x,r)$ where $x\in M$ and $r \in \mathbb{R}_{>0}$. Moreover, we fix a point $\bar{x}\in M$ and we introduce the projections $\pi_x : \cone \rightarrow M$ and $\pi_r : \cone \rightarrow \mathbb{R}_{\geq 0}$ defined by
\begin{equation}
\pi_x ([x,r]) = \left\{ 
\begin{array}{ll}
x & \text{if}~r>0\,, \\
\bar{x} & \text{if}~r=0\,, 
\end{array}
\right.
 \qquad
\pi_r ([x,r]) = r\,.
\end{equation} 
We endow the cone with the metric tensor $g_{\cone} = r^2 g+ \mr{d} r^2$, defined on $M\times \mathbb{R}_{>0}$. We denote the associated norm by $|\cdot|_{g_\cone}$. All differential operators, e.g., $\nabla$, $\mr{div}$ and so on, are computed with respect to the Euclidean metric on $M$; we will use the superscript $g_\cone$ to indicate when they are computed with respect to the cone metric. 
The distance on the cone $d_{{\cone}}:\cone \times \cone \rightarrow \mathbb{R}_{\geq 0}$ is given by
\begin{equation}
d_{\cone}([x_1,r_1],[x_2,r_2])^2 = r_1^2 + r_2^2 - 2 r_1 r_2 \cos(\min(|x_1-x_2|, \pi))\,
\end{equation}
(see, for example, definition 3.6.16 in \cite{burago2001course}). The closed subset of the cone composed of points below a given radius $R>0$ is denoted by $\cone_R$, or more precisely
\begin{equation}
\cone_R \coloneqq \{ [x,r] \in \cone\,;\,r\leq R\}\,.
\end{equation} 

Given an interval $I\subset \mathbb{R}$, we denote by $C^0(I;\cone)$ and $AC(I;\cone)$ the spaces of, respectively, continuous and absolutely continuous curves $\mr{z}: t\in I \rightarrow {z}_t\in \cone$. We will generally use the notation
\begin{equation}
\mr{x}: t\in I \rightarrow x_t= \pi_x(z_t)\in M\,,\qquad \mr{r}: t\in I \rightarrow r_t= \pi_r({z}_t)\in [0,+\infty)\,,
\end{equation}
so that $\mr{z}=[\mr{x},\mr{r}]$ and ${z}_t=[{x}_t,{r}_t]$. Note that if $\mr{z}$ is  continuous (resp.\ absolutely continuous), then so is the path $\mr{r}$ but not $\mr{x}$. However,  $\mr{x}$ is continuous (resp.\ locally absolutely continuous) when restricted to the open set $\{ t\in I; r_t>0\}$. Then, if we define $\dot{\mr{z}}:  t\in  I \rightarrow \dot{z}_t\in\mathbb{R}^{d+1}$  by
\begin{equation}
\dot{z}_t = \left \{ 
\begin{array}{ll}
(\dot{x}_t, \dot{r}_t) & \text{if } r_t>0 \text{ and the derivatives exist},\\
(0,0)        & \text{otherwise},
\end{array}
\right.
\end{equation}
we have that $|\dot{z}_t |_{g_\cone}$ coincides for a.e.\ $t\in I$ with the metric derivative of $\mr{z}$ with respect to the distance $d_{\cone}$ \cite{liero2018optimal}. 
We denote by $AC^p(I;\cone)$  the space of absolutely continuous curves such that $ |\dot{\mr{z}}|_{g_\cone}\in L^p(I)$. Then, the following variational formula for the distance function holds
\begin{equation}
d_\cone({p}, {q})^2 = \inf \left \{ \int_0^1 | \dot{{z}_t} |^2_{g_\cone}\, \ed t \,; \, \mr{z} \in AC^2([0,1];\cone)\, ,\, {z}_0 = {p}\,, \, {z}_1 = {q}  \right\}.
\end{equation}

We will extensively use the class of homogeneous functions on the cone defined as follows. A function $f:\cone^n \rightarrow \mathbb{R}$ is $p$-homogeneous (in the radial direction) if for any constant $\lambda>0$ and for all $n$-tuples $([x_1,r_1], \ldots, [x_n, r_n] ) \in \cone^n$,
\begin{equation}
f([x_1,\lambda r_1], \ldots, [x_n,\lambda r_n] ) = \lambda^p f([x_1,r_1], \ldots, [x_n, r_n] )\,.
\end{equation}
In particular, a $p$-homogeneous function $f:\cone \rightarrow \mathbb{R}$ satisfies $f([x,\lambda r]) = \lambda^p f([x,r])$. Similarly, a functional $\sigma: C^0(I;\cone) \rightarrow \mathbb{R}$ is $p$-homogeneous if for any constant $\lambda >0$ and for any path $\mr{z}\in C^0(I;\cone)$,
\begin{equation}
\sigma( t\mapsto [x_t,\lambda r_t] ) = \lambda^p \sigma (\mr{z})\,,
\end{equation}
where $\mr{z} : t \in I \rightarrow [x_t,r_t] \in \cone$. 

\subsection{Measure theoretic background}
Let $X$ be a Polish space, i.e.\ a complete and separable metric space. We denote by $\mc{M}(X)$ the set of non-negative and finite Borel measures on $X$. The set of probability measures on $X$ is denoted by $\mc{P}(X)$. Let $Y$ be another Polish space and $F:X \rightarrow Y$ a Borel map. Given a measure $\mu\in \mc M(X)$ we denote by $F_\#\mu \in \mc{M}(Y)$ the push-forward measure defined by $(F_\#\mu)(A) \coloneqq \mu (F^{-1}(A))$ for any Borel set $A \subset Y$. Given a Borel set $B \subset X$ we let $\mu\mres B$ the restriction of $\mu$ to $B$ defined by $\mu\mres B (C) \coloneqq \mu(B \cap C)$ for any Borel set $C\subseteq X$. Note that we will generally use bold symbols to denote measures on product spaces, e.g., $\bs{\mu} \in \mc{M}(X \times \ldots \times X)$. 

We endow $\mc{P}(X)$ with the topology induced by narrow convergence, which is the convergence in duality with the space of real-valued continuous bounded functions $C^0_b(X)$. In other words, a sequence $\mu_n\in\mc{P}(X)$, $n\in \mathbb{N}$, is said to converge narrowly to $\mu \in \mc{P}(X)$ if for any $f\in C^0_b(X)$
\begin{equation}\label{eq:narconv}
\lim_{n\to+\infty} \int_X f \, \ed \mu_n = \int_X f \, \ed \mu\,.
\end{equation}
In practice, however, to check for narrow convergence it is sufficient to verify equation \eqref{eq:narconv} for all bounded Lipschitz continuous functions. With such a topology, $\mc{P}(X)$ can be identified with a subset of $[C^0_b(X)]^*$ with the weak-* topology (see Remark 5.1.2 in \cite{ambrosio2008gradient}). In addition, given a lower semi-continuous function $f:X \rightarrow \mathbb{R} \cup \{+\infty\}$, \corr{bounded from below,} the functional $\mc{F}:\mc{P}(X) \rightarrow  \mathbb{R} \cup \{+\infty\}$ defined by
\begin{equation}
\mc{F}(\mu) \coloneqq \int_X f \,\ed \mu\,
\end{equation}
is also lower-semicontinuous (see Lemma 1.6 in \cite{santambrogio2015optimal}) .

As usual in this setting, we will use Prokhorov's theorem for a characterization of compact subsets of $\mc{P}(X)$ endowed with the narrow topology.

\begin{theorem}[Prokhorov's theorem]\label{th:prok}
A set $\mc{K} \subset \mc{P}(X)$ is relatively sequentially compact in $ \mc{P}(X)$ if and only if it is tight, i.e.\ for any $\epsilon >0$ there exists a compact set $K_\epsilon\subset X$ such that $\mu(X\setminus K_\epsilon) < \epsilon$ for any $\mu \in \mc{K}$.
\end{theorem}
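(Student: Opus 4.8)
The statement is Prokhorov's classical theorem, so the plan is to reproduce its standard proof adapted to the narrow topology on $\mc{P}(X)$. The two implications are of quite different character: the converse (\emph{relative sequential compactness} $\Rightarrow$ \emph{tightness}) is the softer one and I would treat it first. Arguing by contradiction, fix a countable dense set $\{x_i\}\subset X$ (separability) and, for $\delta>0$, put $G_n^\delta \coloneqq \bigcup_{i=1}^n B(x_i,\delta)$, an increasing family of open sets with $G_n^\delta \uparrow X$. If for some $\epsilon>0$ we had $\sup_{\mu\in\mc{K}}\mu(X\setminus G_n^\delta)\geq \epsilon$ for every $n$, I could choose $\mu_n\in\mc{K}$ with $\mu_n(G_n^\delta)\leq 1-\epsilon$, extract a narrowly convergent subsequence $\mu_{n_k}\rightharpoonup\mu\in\mc{P}(X)$, and use the Portmanteau lower semicontinuity of $\nu\mapsto\nu(G)$ on open sets to get $\mu(G_m^\delta)\leq \liminf_k \mu_{n_k}(G_m^\delta)\leq 1-\epsilon$ for every fixed $m$, contradicting $\mu(G_m^\delta)\uparrow\mu(X)=1$. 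Hence for all $\epsilon,\delta>0$ finitely many $\delta$-balls carry mass $>1-\epsilon$ uniformly over $\mc{K}$; intersecting over $j\in\N$ the finite unions $F_j$ of closed $1/j$-balls of uniform mass $>1-\epsilon\,2^{-j}$ yields a closed, totally bounded — hence, by completeness, compact — set $K_\epsilon\coloneqq\bigcap_j F_j$ with $\mu(X\setminus K_\epsilon)\leq\sum_j\epsilon\,2^{-j}=\epsilon$ for all $\mu\in\mc{K}$.

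For the substantive direction (\emph{tightness} $\Rightarrow$ \emph{relative sequential compactness}) I would pass to a compactification. Since $X$ is Polish, it is separable metric and therefore embeds homeomorphically, via some map $\iota$, onto a Borel ($G_\delta$) subset of the Hilbert cube $Q\coloneqq[0,1]^{\N}$, which is compact metrizable. Given a sequence $(\mu_n)$ in $\mc{K}$, the push-forwards $\iota_\#\mu_n$ lie in $\mc{P}(Q)$; because $C^0(Q)$ is a separable Banach space, the closed unit ball of its dual is weak-* sequentially compact and metrizable, and $\mc{P}(Q)$ is a weak-* closed subset of it, so some subsequence satisfies $\iota_\#\mu_{n_k}\rightharpoonup\nu$ for a $\nu\in\mc{P}(Q)$. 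Tightness then enters to locate $\nu$: taking compact $K_j\subset X$ with $\mu_n(X\setminus K_j)<1/j$ uniformly, each $\iota(K_j)$ is compact, hence closed in $Q$, so Portmanteau gives $\nu(\iota(K_j))\geq\limsup_k \iota_\#\mu_{n_k}(\iota(K_j))\geq 1-1/j$; letting $j\to\infty$ shows $\nu$ is concentrated on $\iota(X)$, so $\mu\coloneqq(\iota^{-1})_\#\nu$ is a well-defined element of $\mc{P}(X)$ with $\iota_\#\mu=\nu$.

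The step I expect to be the genuine obstacle is upgrading the convergence $\iota_\#\mu_{n_k}\rightharpoonup\nu$ — which only tests against $C^0(Q)$, that is, against restrictions to $X$ of functions extending continuously to $Q$ — to honest narrow convergence $\mu_{n_k}\rightharpoonup\mu$ in duality with the full space $C^0_b(X)$. Here tightness is exactly what prevents mass from escaping to the boundary $Q\setminus\iota(X)$ and makes the approximation close: given $g\in C^0_b(X)$ and $j$, I would restrict $g$ to the compact set $K_j$, extend $g\circ\iota^{-1}|_{\iota(K_j)}$ by Tietze to some $\tilde g\in C^0(Q)$ with $\|\tilde g\|_{C^0}\leq\|g\|_{C^0}$, and set $\phi\coloneqq\tilde g\circ\iota$, which agrees with $g$ on $K_j$. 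Since $\mu(X\setminus K_j)\leq 1/j$ and $\mu_{n_k}(X\setminus K_j)<1/j$, the errors $|\!\int g\,\ed\mu_{n_k}-\int\phi\,\ed\mu_{n_k}|$ and the corresponding one for $\mu$ are each bounded by $2\|g\|_{C^0}/j$, while $\int\phi\,\ed\mu_{n_k}\to\int\phi\,\ed\mu$ by the weak-* convergence; hence $\limsup_k|\!\int g\,\ed\mu_{n_k}-\int g\,\ed\mu|\leq 4\|g\|_{C^0}/j\to 0$. This closes the argument and shows $(\mu_n)$ admits a narrowly convergent subsequence in $\mc{P}(X)$.
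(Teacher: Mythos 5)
Your proof is correct. One thing to note before assessing it on its merits: the paper does not prove this statement at all --- it is quoted as the classical Prokhorov theorem (with the paper's standing assumption that $X$ is Polish) and used as a black box, so there is no internal proof to compare against. Your argument is the standard textbook one, and both halves are sound: for the soft direction you correctly combine the Portmanteau inequality for open sets with the exhaustion $G_n^\delta \uparrow X$ to get uniform mass on finite unions of balls at every scale, and then use completeness to turn the closed totally bounded set $\bigcap_j F_j$ into the required compact set; for the hard direction, the embedding into the Hilbert cube, weak-* sequential compactness of $\mc{P}(Q)$ via separability of $C^0(Q)$, the Portmanteau upper bound on the closed sets $\iota(K_j)$ to pin $\nu$ onto $\iota(X)$, and the Tietze-extension approximation to upgrade duality with $C^0(Q)$ to duality with $C^0_b(X)$ are exactly the right steps, and your error estimate $4\|g\|_{C^0}/j$ closes the argument. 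The only blemish is in the contradiction setup of the first direction: from $\sup_{\mu\in\mc{K}}\mu(X\setminus G_n^\delta)\geq\epsilon$ you cannot literally select $\mu_n$ with $\mu_n(G_n^\delta)\leq 1-\epsilon$ unless the supremum is attained; you should select $\mu_n$ with $\mu_n(X\setminus G_n^\delta)>\epsilon/2$ and run the same Portmanteau contradiction with $\epsilon/2$. This is a one-line fix, not a genuine gap.
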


We also need a criterion to pass to the limit when computing integrals of unbounded functions: for this  will use the concept of uniform integrability. Given a set $\mc{K} \subset \mc{P}(X)$, we say that a Borel function $f:X \rightarrow \mathbb{R}_{\geq 0} \cup \{+\infty\}$ is uniformly integrable with respect to $\mc{K}$ if for any $\epsilon >0$ there exists a $k>0$ such that, for any $\mu \in \mc{K}$,
\begin{equation}
\int_{f(x)>k} f(x)\, \ed \mu(x) < \epsilon \,.
\end{equation}
\begin{lemma}[Lemma 5.1.7 in \cite{ambrosio2008gradient}] \label{lem:unifint}
Let $\{\mu_n\}_{n\in \mathbb{N}}$ be a sequence in $\mc{P}(X)$ narrowly convergent to $\mu \in \mc{P}(X)$ and let $f\in C^0(X)$. If \corr{$|f|$} is uniformly integrable with respect to the set $ \{\mu_n\}_{n\in \mathbb{N}}$ then
\begin{equation}
\lim_{n\to +\infty} \int_X f \, \ed \mu_n = \int_X f \, \ed \mu\,.
\end{equation}
\end{lemma}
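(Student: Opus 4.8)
The plan is a standard truncation argument, exploiting the fact that narrow convergence tests only against bounded continuous functions while uniform integrability controls the truncation error uniformly in $n$. We may assume $f\geq 0$: otherwise write $f=f^+-f^-$ and run the argument on each nonnegative continuous part, both dominated by $|f|$. First I would introduce the truncations $f_k\coloneqq\min\{f,k\}=f\wedge k$ for $k>0$. Each $f_k$ belongs to $C^0_b(X)$, so by the definition of narrow convergence, for every fixed $k$,
\[
\lim_{n\to+\infty}\int_X f_k\,\ed\mu_n=\int_X f_k\,\ed\mu .
\]

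Next I would use uniform integrability to kill the tails uniformly in $n$. Fix $\epsilon>0$. Uniform integrability furnishes a threshold $k_1>0$, independent of $n$, such that $\int_{\{f>k\}}f\,\ed\mu_n<\epsilon$ for every $n$ and every $k\geq k_1$ (the tail integral being nonincreasing in $k$). Since $0\le f-f_k\le f\,\one_{\{f>k\}}$, this yields $0\le \int_X f\,\ed\mu_n-\int_X f_k\,\ed\mu_n\le\epsilon$ for all $n$.

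The delicate point is obtaining the same tail control for the limit measure $\mu$, which need not belong to the family $\{\mu_n\}$. Splitting the integral at $k_1$ and using $\mu_n(X)=1$ gives $\int_X f\,\ed\mu_n\le k_1+\epsilon$ for all $n$; since $f$ is continuous, hence lower semicontinuous and nonnegative, the lower-semicontinuity of $\mu\mapsto\int_X f\,\ed\mu$ recalled above gives $\int_X f\,\ed\mu\le\liminf_n\int_X f\,\ed\mu_n\le k_1+\epsilon<+\infty$, so that $f\in L^1(\mu)$. Monotone convergence $f_k\uparrow f$ then lets me choose $k_2\ge k_1$ with $\int_{\{f>k_2\}}f\,\ed\mu<\epsilon$. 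Setting $k\coloneqq k_2$, I would conclude from the triangle inequality
\[
\Big|\int_X f\,\ed\mu_n-\int_X f\,\ed\mu\Big|
\le\int_{\{f>k\}}f\,\ed\mu_n
+\Big|\int_X f_k\,\ed\mu_n-\int_X f_k\,\ed\mu\Big|
+\int_{\{f>k\}}f\,\ed\mu ,
\]
where the first and third terms are below $\epsilon$ and the middle one tends to $0$ as $n\to+\infty$ by the first step; hence $\limsup_n|\cdots|\le 2\epsilon$, and letting $\epsilon\to 0$ proves the claim.

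The main obstacle is conceptual rather than computational: narrow convergence is duality against bounded test functions only, so one must truncate and then certify that the truncation error is uniformly negligible. The subtle half of this is the limit measure $\mu$, which is not covered by the uniform integrability hypothesis (a statement about the sequence $\{\mu_n\}$) and is instead handled by first extracting a uniform $L^1$ bound and then invoking the lower-semicontinuity of the integral functional to transfer finiteness and tail smallness to $\mu$.
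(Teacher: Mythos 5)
The paper does not prove this lemma: it is quoted, with citation, from Ambrosio--Gigli--Savar\'e (Lemma 5.1.7/5.1.17 there), so there is no in-paper argument to compare against; your proof is correct and is essentially the standard argument behind the cited result (truncate by $f\wedge k$, test narrow convergence on the bounded truncation, use uniform integrability for tail control uniform in $n$, and transfer finiteness and tail smallness to the limit measure via lower semicontinuity of $\mu\mapsto\int f\,\ed\mu$). One cosmetic remark: monotone convergence with $f_k=f\wedge k$ gives $\int_X (f-f_k)\,\ed\mu\to 0$, i.e.\ $\int_{\{f>k\}}(f-k)\,\ed\mu\to 0$, not directly $\int_{\{f>k\}}f\,\ed\mu\to 0$; the latter requires dominated convergence, or monotone convergence applied to $f\,\mathds{1}_{\{f\le k\}}\uparrow f$, but this is harmless since your final triangle inequality only uses the quantity $\int_X (f-f_k)\,\ed\mu$, which your monotone-convergence step does control.
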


For a fixed $T>0$, we will denote by $\Omega(X) \coloneqq C^0([0,T];X)$ the space of continuous paths on $X$. This is a Polish space so that we can use the tools introduced in this section also for probability measures $\bs{\mu} \in \mc{P}(\Omega(X))$. We call such probability measures \emph{generalized flows} or also \emph{dynamic plans}. When $X=\mc{C}$, where $\cone$ is the cone over $M\subset\mathbb{R}^d$, we will often use $\Omega$ to denote $\Omega(\mc{C})$.

Since we will work with homogeneous functions on the cone, we also introduce the space of probability measures $\mc{P}_p(X)$, for $p>0$, defined by
\begin{equation}
\mc{P}_p(X) \coloneqq \left \{ \mu \in \mc{P}(X) \,;\,  \int_X d_X(x,\bar{x})^p \,\ed \mu(x) < + \infty  \text{ for some }\bar{x}\in X \right \}\,.
\end{equation} 
Then, if $\bs{\mu} \in \mc{P}_p(\cone^n)$ it is easy to verify that any locally-bounded $p$-homogeneous function on $\cone^n$ is $\bs{\mu}$-integrable.

Finally, we will denote by $\rho_0$ the Lebesgue measure on $M$  normalized so that $\rho_0(M) = 1$.

\section{The variational formulation on the cone} \label{sec:variational}
In this section we describe the geometric structure underlying problem \ref{prob:det} using the group of automorphisms of the cone. Such a formulation was introduced in \cite{gallouet2017camassa} and it was used to interpret the CH equation as an incompressible Euler equations on the cone. 
In this section we will only focus on smooth solutions, but we will later use the variational interpretation presented here to guide the construction of generalized $H(\mr{div})$ geodesics. We will keep the discussion formal at this stage and we will use some standard geometric tools and notation commonly adopted in similar contexts.

For any $\varphi \in \mr{Diff}(M)$ and $\lambda \in C^\infty(M;\mb{R}_{>0})$, we let $(\varphi,\lambda):\cone \rightarrow \cone$ be the map defined by 
$(\varphi,\lambda)([x,r]) = [\varphi(x),\lambda(x)r]$. The automorphism group $\mr{Aut}(\cone)$ is the collection of such maps, i.e.\
\begin{equation}
\mr{Aut}(\cone) = \{ (\varphi,\lambda):\cone \rightarrow \cone ; \, \varphi \in \mr{Diff}(M),\, \lambda \in C^\infty(M;\mb{R}_{>0})  \}\,.
\end{equation}
The group composition law is given by
\begin{equation}
(\varphi,\lambda)\cdot (\psi,\mu) = (\varphi\circ\psi, (\lambda \circ \psi) \mu)\,,
\end{equation}
the identity element is $(\mr{Id},1)$, where $\mr{Id}$ is the identity map on $M$, and the inverse is given by $(\varphi,\lambda)^{-1} = (\varphi^{-1}, \lambda^{-1} \circ \varphi^{-1})$.
The tangent space of $\mr{Aut}(\cone)$ at $(\varphi,\lambda)$ is denoted by $T_{(\varphi,\lambda)}\mr{Aut}(\cone)$ and it
 can be identified with the space of vector fields $C^\infty(M;\mathbb{R}^{d+1})$. The collection all the tangent spaces is the tangent bundle $T\mr{Aut}(\cone)$.
We endow $T\mr{Aut}(\cone)$ with the $L^2(M;T\cone)$ metric inherited from $g_{\cone}$. This is defined as follows: given $(\dot{\varphi},\dot{\lambda})\in  T_{(\varphi,\lambda)} \mr{Aut}(\cone)$, 
\begin{equation}\label{eq:l2cone}
\|(\dot{\varphi},\dot{\lambda})\|^2_{L^2(M;T\cone)} \coloneqq \int_M (\lambda^2  |\dot{\varphi}|^2 + \dot{\lambda}^2 )\,\ed\rho_0 \, ,
\end{equation}
where $|\cdot |$ is the Euclidean norm and $\rho_0$ is the Lebesgue measure on $M$ normalized so that $\rho_0(M)=1$.

In \cite{gallouet2017camassa} the authors found that the $H(\mr{div})$ geodesic equations on $M$ coincide with the geodesic equation on the subgroup $\mr{Aut}_{\rho_0}(\cone) \subset \mr{Aut}(\cone)$ defined as follows:
\begin{equation}\label{eq:autrho}
\mr{Aut}_{\rho_0}(\cone) \coloneqq \{ (\varphi,\lambda) \in \mr{Aut}(\cone) \,; \varphi_\#(\lambda^2 \rho_0) = \rho_0\}\,.
\end{equation}
In other words, the group $\mr{Aut}_{\rho_0}(\cone)$ can be regarded as the configuration space for the $H(\mr{div})$ geodesic problem in the same way as the $\mr{Diff}_{\rho_0}(M)$ is the configuration space for the incompressible Euler equations, with
\begin{equation}
\mr{Diff}_{\rho_0}(M) \coloneqq \{ \varphi \in \mr{Diff}(M) \,; \varphi_\# \rho_0 = \rho_0\}\,.
\end{equation}

In order to see this, we first observe that the $L^2(M;T\cone)$ metric is right invariant when restricted to $\mr{Aut}_{\rho_0}(\cone)$.
In particular, for any $(\psi,\vartheta)\in  \mr{Aut}_{\rho_0}(\cone)$, consider the right translation map $R_{(\psi,\vartheta)}: \mr{Aut}_{\rho_0}(\cone) \rightarrow \mr{Aut}_{\rho_0}(\cone)$ defined by $R_{(\psi,\vartheta)}(\varphi,\lambda) = (\varphi,\lambda)\cdot (\psi,\vartheta)$. Its tangent map at $(\varphi,\lambda)$ is given by
\begin{equation}
TR_{(\psi,\mu)} (\dot{\varphi},\dot{\lambda}) = (\dot{\varphi}\circ \psi, (\dot{\lambda}\circ \psi)\vartheta).
\end{equation} 
Then, it is easy to check that $\|TR_{(\psi,\vartheta)}(\dot{\varphi},\dot{\lambda})\|^2_{L^2(M;T\cone)} = \|(\dot{\varphi},\dot{\lambda})\|^2_{L^2(M;T\cone)}$.
Geodesics on $\mr{Aut}_{\rho_0}(\cone)$ correspond to stationary paths on $T\mr{Aut}_{\rho_0}(\cone)$ for the action functional 
\begin{equation}\label{eq:actiondet}
\int_0^T L((\varphi,\lambda),(\dot{\varphi},\dot{\lambda}))\, \mr{d} t\,
\end{equation}
for a given $T>0$, where the Lagrangian $L((\varphi,\lambda),(\dot{\varphi},\dot{\lambda})) = \|(\dot{\varphi},\dot{\lambda})\|^2_{L^2(M;T\cone)}$.
Define the Eulerian velocities $(u,\alpha) \in T_{(\mr{Id},1)} \mr{Aut}(\cone)$ by
\begin{equation}
(u,\alpha) = TR_{(\varphi,\lambda)^{-1}} (\dot{\varphi},\dot{\lambda}) = (\dot{\varphi} \circ \varphi^{-1}, (\dot{\lambda}{\lambda}^{-1}) \circ \varphi^{-1})\,.
\end{equation}
In terms of these variables the constraint $\varphi_\#(\lambda^2 \rho_0) = \rho_0$ becomes $2 \alpha = \mr{div}\, u$, since for any $f\in C^\infty(M)$, 
\begin{equation}
0= \frac{\mr{d}}{\mr{d} t} \int_M f  \,\ed\varphi_\#(\lambda^2 \rho_0)  = \int_M (-\mr{div}\, u + 2 \alpha)f \,\ed\rho_0\,. \\
\end{equation}
Moreover, by right invariance,
\begin{equation}
L((\varphi,\lambda),(\dot{\varphi},\dot{\lambda})) = L((\mr{Id},1),(u,\alpha)) = \int_M |u|^2 + \frac{1}{4} |\mr{div}\, u|^2 \,\ed\rho_0\,,
\end{equation}
which is the $H(\mr{div})$ norm. Note that the coefficient $1/4$ is directly related to the choice of $g_\cone$ as cone metric. Using different coefficients in $g_\cone$ we can obtain the general form of the Lagrangian in equation \eqref{eq:ablagg} with $c=0$.
Introducing $P$ as the Lagrange multiplier for the constraint $\varphi_\#(\lambda^2 \rho_0) = \rho_0$, the Euler-Lagrange equations associated with $L$ read as follows
\begin{equation}\label{eq:geodesic}
\left \{
\begin{array}{l}
\lambda \ddot{\varphi} + 2  \dot{\lambda} \dot{\varphi} + \frac{1}{2}\lambda \nabla P \circ \varphi = 0\,,\\
\ddot{\lambda} - \lambda  |\dot{\varphi}|^2 + \lambda P \circ  \varphi = 0\,,
\end{array}
\right.
\end{equation}
which can be expressed in terms of $(u,\alpha)$ by composing both equation with $\varphi^{-1}$, yielding
\begin{equation}\label{eq:geodesiceulerian}
\left \{
\begin{array}{l}
\dot{u} + \nabla_u u +  2 u \alpha = -\frac{1}{2}   \nabla P \,,\\
\dot{\alpha} + u \cdot \nabla \alpha + \alpha^2 - |u|^2 = - P \,.
\end{array}
\right.
\end{equation}
In one dimension, using the relation $\alpha = \mr{div} \, u/2$, this finally gives the CH equation for $u$, i.e.\ equation \eqref{eq:ch1d}.

\begin{remark}\label{rem:pres} 
Note that in the literature for the CH equation the ``pressure field" is sometimes defined in a different way so that, when $M$ is one-dimensional, the first equation in \eqref{eq:geodesiceulerian} can be written as
\begin{equation}
\partial_t{u} +u \partial_x u = - \partial_x p \,,\\
\end{equation}  
\corr{for an appropriate function $p = (\mr{Id} - \frac{1}{4}\partial_{xx})^{-1}(u^2 + \frac{1}{8} (\partial_x u)^2)$ (see, e.g., \cite{holm2003wave}). Throughout the paper we will instead intend by pressure the Lagrange multiplier $P$ considered above, which is related to $p$ by
\begin{equation}
P = 2p-u^2\,.
\end{equation}  
In section \ref{sec:existence} we will prove that the pressure $P$ is uniquely defined for minimizers of the $H(\mr{div})$ geodesic problem. On the other hand, we cannot prove the same result for $p$, since the flow $\varphi$ and as a consequence the velocity field $u$ may not be well-defined for generalized solutions (see the explicit examples of generalized solutions in section \ref{sec:examples}).  } 
\end{remark}

\section{The generalized $H(\mr{div})$ geodesic formulation}\label{sec:generalized}
\corr{In section \ref{sec:variational} we recalled the interpretation of the $H(\mr{div})$ metric as an $L^2$ metric on $\mr{Aut}_{\rho_0}(\cone)$, which is defined in \eqref{eq:autrho}}. Then, we can reformulate problem \ref{prob:det} as follows:
\begin{problem}[$H(\mr{div})$ geodesic problem on the cone]\label{prob:detunb}
Given a diffemorphism $\fmap\in \mr{Diff}(M)$, find a smooth curve $t\in[0,T] \mapsto (\varphi_t,\lambda_t) \in \mr{Aut}_{\rho_0}(\cone)$ satisfying
\begin{equation}\label{eq:couplingconstrdet}
(\varphi_0,\lambda_0) = (\mr{Id},1) \,, \qquad (\varphi_T,\lambda_T) = (\fmap,\sqrt{|\mr{Jac}(\fmap)|})\,,
\end{equation}
and minimizing the action in equation \eqref{eq:actiondet}.
\end{problem} 

\corr{As already pointed out in \cite{gallouet2017camassa}, there is a remarkable analogy between this problem and Arnold's geometric interpretation of the incompressible Euler equations \cite{arnold1966geometrie}. This suggests that adapting to this problem Brenier's concept of generalized flow could be a successful strategy to characterize its minimizers. In this section we follow this path and in particular we formulate the generalized $H(\mr{div})$ geodesic problem and prove existence of solutions.}

By generalized flow or dynamic plan we mean a probability measure on the space of continuous paths of the cone $\bs{\mu} \in \mc{P}(\Omega)$. 
This is a generalization for curves on the automorphism group since for any smooth curve $(\varphi,\lambda):t\in[0,T] \rightarrow (\varphi_t,\lambda_t) \in \mr{Aut}_{\rho_0}( \cone)$, we can associate the generalized flow $\bs{\mu}$ defined by
\begin{equation}\label{eq:smoothmu}
\bs{\mu} = (\varphi,\lambda)_\# \rho_0\,,
\end{equation}
\corr{where we recall that the Lebesgue measure $\rho_0$ is normalized in such a way that $\rho_0(M)=1$.}
More explicitly, for any Borel functional ${\mc F}:\Omega \rightarrow \mathbb{R}$,
\begin{equation}\label{eq:Fphi}
\int_{\Omega} {\mc F}(\mr{z}) \,\ed\bs{\mu}(\mr{z}) = \int_{M} {\mc F}([\varphi(x),\lambda(x)]) \ed \rho_0(x) \,, 
\end{equation}
where $[\varphi(x),\lambda(x)]:t\in[0,T] \rightarrow [\varphi_t(x),\lambda_t(x)] \in \cone$.

The condition $(\varphi_t)_\# \lambda_t^2 \rho_0 = \rho_0$ is equivalent to requiring \corr{$\lambda_t = \sqrt{|\mr{Jac}(\varphi_t)|}$}. We want to generalize this condition for arbitrary $\bs{\mu} \in \mc{P}(\Omega)$.
Let $e_{t}:\Omega \rightarrow \cone$ be the evaluation map at time $t\in[0,T]$. Then, if $\bs{\mu}$ is defined as in \eqref{eq:smoothmu}, we have
\begin{equation}\label{eq:marginals}
{\mf h}^2_t (\bs{\mu}) \coloneqq  (\pi_x)_\# [r^2 (e_{t})_\# \bs{\mu}]= \rho_0\,.
\end{equation}  
In fact, for any $f\in C^0(M)$,
\begin{equation}
\begin{aligned}
\int_M f \,\ed {\mf h}^2_{t} (\bs{\mu})  & = \int_{\Omega} f(x_{t})  r_{t}^2 \,\ed \bs{ \mu} (\mr{z}) \\
& =  \int_{\Omega} f(x_{t})  r^2_{t} \,\ed (\varphi,\lambda)_\# \rho_0\\
& =  \int_{M} f \circ \varphi_{t} \lambda^2_{t}  \,\ed\rho_0\\
& =  \int_{M} f \,\ed(\varphi_{t})_\# \lambda^2_{t} \rho_0\\
& =  \int_{M} f \,\ed\rho_0\,,
\end{aligned}
\end{equation}
where for any path $\mr{z}$ and any time $t$, $x_t \coloneqq \pi_x(z_t)$ and  $r_t \coloneqq \pi_r(z_t)$.
By similar calculations, we also obtain
\begin{equation}\label{eq:coupling}
(e_0,e_T)_\#  \bs{\mu} = \bs{\gamma} \coloneqq [(\varphi_0,\lambda_0),(\varphi_T,\lambda_T)]_\# \rho_0\,.
\end{equation} 
In other words, enforcing the boundary conditions in the generalized setting boils down to constraining a certain marginal of $\bs{\mu}$ to coincide with a given \emph{coupling plan} $\bs{\gamma}$ on the cone, i.e.\ a probability measure in $\mc{P}(\mc{C} \times \mc{C})$.   

Consider now the energy functional $\mc{A}: \Omega \rightarrow \mathbb{R}_{\geq 0} \cup \{+\infty\}$ defined by
\begin{equation}\label{eq:energyfunctional}
{\mc A}(\mr{z}) \coloneqq  \left \{ 
\begin{array}{ll}
\int_0^T |\dot{z}_t|^2_{g_{\cone}}\, {\mr{d} t} & \text{if }\mr{z}\in AC^2([0,T]; \mc{C}) \,,\\
+\infty & \text{otherwise}\,.
\end{array}\right.
\end{equation}
Setting ${\mc F}(\mr{z}) = {\mc A}(\mr{z})$ in \eqref{eq:Fphi} we obtain the $H(\mr{div})$ action expressed in Lagrangian coordinates. This motivates the following definition for the generalized $H(\mr{div})$ geodesic problem.

\begin{problem}[Generalized $H(\mr{div})$ geodesic problem]\label{prob:generalizedunb}
Given a coupling plan on the cone ${\bs \gamma} \in \mc{P}_2(\mc{C}^2)$, find the dynamic plan $\bs \mu \in \mc{P}(\Omega)$ satisfying: the \emph{homogeneous coupling constraint}
\begin{equation}\label{eq:couplingconstr}
\int_\Omega f(z_0,z_T) \, \ed \bs{\mu}(\mr{z}) = \int_{\cone^2} f\, \ed \bs{\gamma}\,,
\end{equation}
for all 2-homogeneous continuous functions $f:{\cone^2} \rightarrow \mathbb{R}$;
the \emph{homogeneous marginal constraint}
\begin{equation}\label{eq:constraintsunb}
\int_\Omega \int_0^T f(t,x_t) r_t^2 \, \ed t \, \ed{\bs \mu}(\mr{z}) = \int_M \int_0^T  f(t,x) \,\ed t \, \ed\rho_0(x)  \quad \forall \, f \in C^0([0,T] \times M)\, ; 
\end{equation}
and minimizing the action 
\begin{equation}
\mc{A}(\bs{\mu}) \coloneqq \int_{\Omega} {\mc A}(\mr{z})  \,\ed\bs{\mu}(\mr{z}) \,.
\end{equation}
\end{problem} 

We remark three basic facts on this formulation:
\begin{itemize}
\item  we substituted the constraint in \eqref{eq:marginals} by its time-integrated version in equation \eqref{eq:constraintsunb} as this form will be easier to manipulate in the following. However, the two formulations are equivalent when restricting to generalized flows with finite action (see lemma \ref{lem:equivconstr});
\item  we replaced the strong coupling constraint \eqref{eq:coupling} by a weaker version, which is always implied by the former as long as $\bs{\gamma}\in\mc{P}_2(\mc{C}^2)$ and in particular when $\bs{\gamma}$ is deterministic, i.e.\ when it is induced by a diffeomorphism as in  equation \eqref{eq:coupling};
\item  we allow for general coupling plans in $\mc{P}_2(\mc{C}^2)$ so that the integral on the right-hand side of equation \eqref{eq:couplingconstr} is finite.
However, we will mostly be interested in the case where the coupling is deterministic. 
\end{itemize}

The first of the points above is made explicit in the following lemma, whose proof is postponed to the appendix.

\begin{lemma}\label{lem:equivconstr}
For any generalized flow $\bs{\mu}$ with $\mc{A}(\bs{\mu}) < + \infty$ and satisfying the homogeneous coupling constraint in equation \eqref{eq:couplingconstr}, the homogeneous marginal constraint in equation \eqref{eq:constraintsunb} is equivalent to the constraint 
\begin{equation}\label{eq:constraintsunbs}
{\mf h}^2_t (\bs{\mu}) = \rho_0\,
\end{equation}
for all $t\in[0,T]$. 
\end{lemma}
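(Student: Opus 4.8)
The plan is to prove the two implications separately, with essentially all the content residing in deducing the pointwise-in-time constraint \eqref{eq:constraintsunbs} from its time-averaged form \eqref{eq:constraintsunb}.

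For the direction \eqref{eq:constraintsunbs} $\Rightarrow$ \eqref{eq:constraintsunb}, I would fix $f\in C^0([0,T]\times M)$ and integrate the identity ${\mf h}^2_t(\bs{\mu})=\rho_0$, read against $f(t,\cdot)$, over $t\in[0,T]$. To exchange the order of integration I would invoke Tonelli: testing \eqref{eq:constraintsunbs} with the constant function shows $\int_\Omega r_t^2\,\ed\bs{\mu}=1$ for every $t$, hence $\int_\Omega\int_0^T r_t^2\,\ed t\,\ed\bs{\mu}=T<\infty$, and since $f$ is bounded on the compact set $[0,T]\times M$ this legitimises the exchange and yields \eqref{eq:constraintsunb}. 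Joint measurability of $(t,\mr z)\mapsto f(t,x_t)r_t^2$ follows from the joint continuity of the evaluation $(t,\mr z)\mapsto z_t$ on $[0,T]\times\Omega$.

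For the converse, the first step is to reduce to separated test functions: taking $f(t,x)=\psi(t)\phi(x)$ with $\psi\in C^0([0,T])$, $\phi\in C^0(M)$ in \eqref{eq:constraintsunb}, and setting $g_t:=\int_\Omega\phi(x_t)r_t^2\,\ed\bs{\mu}=\int_M\phi\,\ed{\mf h}^2_t(\bs{\mu})$, gives $\int_0^T\psi(t)\big(g_t-\int_M\phi\,\ed\rho_0\big)\,\ed t=0$ for all $\psi$. By the fundamental lemma of the calculus of variations this forces $g_t=\int_M\phi\,\ed\rho_0$ for almost every $t$. The crux is then to upgrade this to every $t$, for which I would prove that $t\mapsto g_t$ is continuous; a continuous function that is a.e.\ equal to a constant equals it everywhere, and letting $\phi$ range over $C^0(M)$ then gives \eqref{eq:constraintsunbs}.

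Continuity of $g_t$ is where the finite-action hypothesis enters, and it is the main obstacle. I would write $g_t=\int_\Omega F(z_t)\,\ed\bs{\mu}$ with $F([x,r]):=\phi(x)r^2$; this $F$ is genuinely continuous on all of $\cone$, including the apex $o$, since $r^2\to0$ there and $\phi$ is bounded. Along each (continuous) path $s\mapsto F(z_s)$ is therefore continuous, so the integrand converges pointwise as $s\to t$, and the remaining task is a dominated-convergence argument. The dominating function is $\|\phi\|_{C^0}\sup_s r_s^2$, and its integrability is the delicate point: for an $AC^2$ path one has $r_t\le r_0+\sqrt{T}\,\mc{A}(\mr z)^{1/2}$ because $\dot r_t^2\le\|\dot z_t\|^2_{g_\cone}$, whence $\sup_t r_t^2\le 2r_0^2+2T\,\mc{A}(\mr z)$. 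Integrating, $\int_\Omega\sup_t r_t^2\,\ed\bs{\mu}\le 2\int_\Omega r_0^2\,\ed\bs{\mu}+2T\,\mc{A}(\bs{\mu})$; the second term is finite by hypothesis, and the first is finite because $r_0^2=\pi_r(z_0)^2$ is a $2$-homogeneous continuous function of $(z_0,z_T)$, so the homogeneous coupling constraint \eqref{eq:couplingconstr} identifies $\int_\Omega r_0^2\,\ed\bs{\mu}$ with $\int_{\cone^2}\pi_r(z_0)^2\,\ed\bs{\gamma}$, which is finite since $\bs{\gamma}\in\mc{P}_2(\cone^2)$ and $\pi_r(z_0)=d_\cone(z_0,o)$. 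Dominated convergence then gives $g_s\to g_t$, completing the argument. Equivalently, this step can be phrased through the narrow continuity of $t\mapsto(e_t)_\#\bs{\mu}$ together with the uniform integrability of $r^2$ furnished by the same bound, appealing to Lemma \ref{lem:unifint}.
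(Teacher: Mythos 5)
Your proof is correct, but it organizes both implications differently from the paper, even though the decisive estimate is the same. For the direction \eqref{eq:constraintsunbs} $\Rightarrow$ \eqref{eq:constraintsunb}, the paper approximates the time integral by Riemann sums $\frac{T}{K}\sum_k f(t_k,x_{t_k})r_{t_k}^2$ and passes to the limit by dominated convergence (which again uses the finite action), whereas you integrate the pointwise identity in $t$ and justify the exchange of integration order by Tonelli--Fubini; your route is more elementary and shows in passing that this implication does not actually need $\mc{A}(\bs{\mu})<+\infty$. For the converse, the paper tests \eqref{eq:constraintsunb} against $f(x)\,\delta_{n,t^*}(t)$ for a Dirac sequence $(\delta_{n,t^*})_n$ and obtains the identity at $t^*$ directly in the limit via dominated convergence; you instead derive equality for a.e.\ $t$ from the fundamental lemma of the calculus of variations and then upgrade to every $t$ by proving continuity of $t\mapsto\int_\Omega\phi(x_t)r_t^2\,\ed\bs{\mu}(\mr{z})$. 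Both arguments ultimately rest on the same domination, $\sup_t r_t^2\leq 2r_0^2+2T\mc{A}(\mr{z})$ for $\bs{\mu}$-a.e.\ path, whose integrability comes from the finite action together with the homogeneous coupling constraint \eqref{eq:couplingconstr} and $\bs{\gamma}\in\mc{P}_2(\cone^2)$ — the paper invokes this integrability tersely ("because of the coupling constraint"), while you spell out why $\int_\Omega r_0^2\,\ed\bs{\mu}$ is finite via the $2$-homogeneity of $\pi_r(z_0)^2$. Your version buys a slightly stronger takeaway, namely the (narrow) continuity of $t\mapsto{\mf h}^2_t(\bs{\mu})$ for finite-action plans, at the cost of an extra step (a.e.\ equality plus continuity) where the paper's mollification argument is more compact.
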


The main result of this section is contained in the following proposition, which states that generalized $H(\mr{div})$ geodesics are well-defined as solutions of problem \eqref{prob:generalizedunb}.

\begin{proposition}[Existence of minimizers] \label{prop:existenceunbounded}
Provided that there exists a dynamic plan $\bs{\mu}^*$ such that $\mc{A}(\bs{\mu}^*)<+\infty$, the minimum of the action in problem~\ref{prob:generalizedunb} is attained.
\end{proposition}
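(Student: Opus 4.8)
The plan is to argue by the direct method of the calculus of variations. By hypothesis there is at least one admissible dynamic plan with finite action, so we may pick a minimizing sequence $\{\bs{\mu}_n\}\subset\mc{P}(\Omega)$ of admissible plans and normalize it so that $\mc{A}(\bs{\mu}_n)\le C:=\mc{A}(\bs{\mu}^*)+1$ for all $n$. The goal is then to produce a narrowly convergent subsequence, pass the action to the limit by lower semicontinuity, and check that the limit is still admissible. We would establish these as three separate ingredients: tightness, lower semicontinuity, and closedness of the constraint set.

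For tightness, the key observation is that any finite-action path is $1/2$-H\"older on $\cone$, since $d_{\cone}(z_s,z_t)\le\int_s^t\|\dot z_\tau\|_{g_{\cone}}\,\ed\tau\le|t-s|^{1/2}\mc{A}(\mr{z})^{1/2}$ by Cauchy--Schwarz; thus $\{\mr{z};\,\mc{A}(\mr{z})\le\Lambda\}$ is uniformly equicontinuous and, by Markov's inequality, $\bs{\mu}_n(\{\mc{A}>\Lambda\})\le C/\Lambda$. To confine the radial variable I would test the marginal constraint \eqref{eq:constraintsunb} with $f\equiv 1$, which gives $\int_\Omega\int_0^T r_t^2\,\ed t\,\ed\bs{\mu}_n=T$ for every $n$, and hence $\bs{\mu}_n(\{\int_0^T r_t^2\,\ed t>\Theta\})\le T/\Theta$. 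On the intersection of the two complementary events there is a time $t_0$ with $r_{t_0}^2\le\Theta/T$, and since $\pi_r$ is $1$-Lipschitz for $d_{\cone}$ we get $r_t\le(\Theta/T)^{1/2}+(T\Lambda)^{1/2}=:R$ for all $t$; such paths take values in the compact set $\cone_R$ and are uniformly equicontinuous, so by the Arzel\`a--Ascoli theorem they form a relatively compact subset of $\Omega$, whose complement carries mass at most $\epsilon$ once $\Lambda,\Theta$ are chosen large relative to $1/\epsilon$. Tightness then holds, and Prokhorov's theorem (Theorem \ref{th:prok}) yields a subsequence with $\bs{\mu}_n\rightharpoonup\bs{\mu}$ narrowly.

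For lower semicontinuity, I would use that $\mc{A}\colon\Omega\to[0,+\infty]$ is itself lower semicontinuous for uniform convergence, being the supremum over partitions of $[0,T]$ of the continuous functionals $\mr{z}\mapsto\sum_i d_{\cone}(z_{t_i},z_{t_{i+1}})^2/(t_{i+1}-t_i)$. Since $\mc{A}$ is nonnegative and lower semicontinuous, the integral functional $\bs{\mu}\mapsto\int_\Omega\mc{A}\,\ed\bs{\mu}$ is lower semicontinuous under narrow convergence by the result on integral functionals recalled in Section~\ref{sec:notation}, so $\mc{A}(\bs{\mu})\le\liminf_n\mc{A}(\bs{\mu}_n)$, which is the infimum. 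It then only remains to show that the limit $\bs{\mu}$ is admissible.

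Verifying the two constraints in the limit is the main obstacle, and the delicate point is that the coupling test functions in \eqref{eq:couplingconstr} and the marginal integrand in \eqref{eq:constraintsunb} grow like $r^2$, hence are unbounded, so narrow convergence does not transfer them directly. The plan is to handle this by truncation combined with uniform integrability (Lemma \ref{lem:unifint}): for $R>0$ introduce a continuous radial cutoff $\chi_R\colon\cone\to[0,1]$ equal to $1$ on $\cone_R$ and vanishing for $r\ge 2R$, so that $\mr{z}\mapsto\int_0^T f(t,x_t)\,r_t^2\,\chi_R(z_t)\,\ed t$ and $\mr{z}\mapsto f(z_0,z_T)\chi_R(z_0)\chi_R(z_T)$ are bounded and continuous and therefore pass to the limit; one then sends $R\to\infty$, so that everything reduces to bounding the truncation errors uniformly, i.e.\ to showing that $\mr{z}\mapsto\int_0^T r_t^2\,\ed t$ and $\mr{z}\mapsto r_0^2+r_T^2$ are uniformly integrable with respect to $\{\bs{\mu}_n\}$. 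Here the coupling constraint fixes $\int_\Omega(r_0^2+r_T^2)\,\ed\bs{\mu}_n$ to the finite value $\int_{\cone^2}(r_0^2+r_T^2)\,\ed\bs{\gamma}$ (so that the hypothesis $\bs{\gamma}\in\mc{P}_2(\cone^2)$ is essential), while the elementary estimate $r_t^2\le 2r_0^2+2T\,\mc{A}(\mr{z})$ controls the interior radius by the endpoints and the energy. The crux, which I expect to be the technical heart of the argument, is to upgrade these uniform first-moment bounds to genuine uniform integrability, thereby ruling out a concentration of $r^2$-mass on vanishingly small sets of paths escaping to large radius; this is precisely where the energy bound and the homogeneous constraints must be exploited together quantitatively. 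Once uniform integrability is in hand, Lemma \ref{lem:unifint} gives convergence of the untruncated integrals, so $\bs{\mu}$ satisfies \eqref{eq:couplingconstr} and \eqref{eq:constraintsunb} (equivalently \eqref{eq:constraintsunbs} via Lemma \ref{lem:equivconstr}) and is a minimizer.
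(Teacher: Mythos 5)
Your skeleton (direct method: tightness, lower semicontinuity of the action, closedness of the constraint set) matches the paper's, and your tightness argument is essentially the paper's Lemma \ref{lem:compactness}: combining the action bound with the marginal constraint tested against $f\equiv 1$ to confine paths to $\cone_R$ up to small mass is correct, as is the lower semicontinuity step. The genuine gap is exactly the step you yourself flag as ``the technical heart'': upgrading the uniform first-moment bounds to uniform integrability of $\mr{z}\mapsto\int_0^T r_t^2\,\ed t$ and $\mr{z}\mapsto r_0^2+r_T^2$ along the minimizing sequence. This is not merely hard; for an \emph{arbitrary} minimizing sequence it is false, and the obstruction comes from the very 2-homogeneity of the problem. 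Indeed, start from any admissible plan $\bs{\mu}$ with finite action, pick a set of paths $A$ with $\bs{\mu}(A)=\epsilon$ carrying positive $r^2$-mass, replace $\bs{\mu}\mres A$ by its dilation $\mr{dil}_{c_n,2}(\bs{\mu}\mres A)$ with constants $c_n\to 0$ (radii scaled by $1/c_n\to\infty$, mass scaled by $c_n^2\to 0$), and park the missing mass $(1-c_n^2)\epsilon$ on the constant apex path $\mr{o}$, which has zero action and contributes nothing to any homogeneous constraint (every continuous 2-homogeneous $f$ satisfies $f(o,o)=0$). Each plan so obtained is an admissible probability measure with the same action as $\bs{\mu}$, hence the sequence is minimizing whenever $\bs{\mu}$ is; yet a fixed positive amount of $r^2$-mass rides on a set of paths of vanishing probability escaping to infinite radius, so uniform integrability fails, and the narrow limit $\bs{\mu}\mres A^c+\epsilon\,\delta_{\mr{o}}$ violates the homogeneous marginal constraint. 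So without modifying the sequence, your truncation scheme cannot close: the limit of a minimizing sequence need not be admissible.

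The missing idea is to turn the homogeneity invariance to your advantage and \emph{renormalize the minimizing sequence before} extracting a limit; this is what the paper does. Remove the apex path by setting $\bs{\mu}_n^{\mr{o}}\coloneqq\bs{\mu}_n\mres(\Omega\setminus\{\mr{o}\})$, introduce the 1-homogeneous functional $\sigma(\mr{z})\coloneqq\bigl(r_0^2+r_T^2+\int_0^T r_t^2\,\ed t\bigr)^{1/2}$, whose square integrates to exactly $T+2$ against every admissible plan by the two constraints, and replace $\bs{\mu}_n^{\mr{o}}$ by $\tilde{\bs{\mu}}_n\coloneqq\mr{dil}_{\sigma/\sqrt{T+2},2}(\bs{\mu}_n^{\mr{o}})$ as in Lemma \ref{lem:rescaling}. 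Since the action and both constraints are invariant under this rescaling, $\tilde{\bs{\mu}}_n$ is again an admissible minimizing sequence, but now each $\tilde{\bs{\mu}}_n$ is \emph{concentrated} on $\{\sigma(\mr{z})^2=T+2\}$. On that set your problematic integrands are uniformly bounded: $\int_0^T f(t,x_t)r_t^2\,\ed t\le\|f\|_{C^0}(T+2)$, and the endpoints lie in $\cone_{\sqrt{T+2}}$ so the 2-homogeneous coupling test functions are bounded as well. Uniform integrability is then automatic (the tail integrals vanish identically for $k>T+2$ up to a constant), Lemma \ref{lem:unifint} applies, and the limit is admissible. With this normalization inserted before your compactness step, the remainder of your argument goes through essentially verbatim.
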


Before providing the proof of proposition \ref{prop:existenceunbounded}, we introduce a useful rescaling operation which will allow us to preserve the homogenous constraint when passing to the limit using sequences of narrowly convergent dynamic plans. Such an operation was introduced in \cite{liero2018optimal} in order to deal with the analogous problem arising from the formulation of optimal entropy-transport (i.e.\ unbalanced transport) on the cone. Adapting the notation in \cite{liero2018optimal} to our setting, we define for a functional $\theta:\Omega \rightarrow \mathbb{R}$,
\begin{equation}
\mr{prod}_\theta(\mr{z}) \coloneqq (t\in [0,T] \mapsto [x_t,r_t/\theta(\mr{z})] )\,.
\end{equation}
Then, given a dynamic plan $\bs{\mu}$, if $\theta(\mr{z})>0$ for $\bs{\mu}$-almost any path $\mr{z}$, we can define the dilation map
\begin{equation}
\mr{dil}_{\theta,2} (\bs{\mu}) \coloneqq \mr{prod}_{\theta\#} (\theta^2 \bs{\mu})\,.
\end{equation}
Since the constraints in equations \eqref{eq:couplingconstr} and \eqref{eq:constraintsunb} are 2-homogeneous in the radial coordinate $r$, they are invariant under the dilation map, meaning that if $\bs{\mu}$ satisfies \eqref{eq:couplingconstr} and \eqref{eq:constraintsunb}, also $\mr{dil}_{\theta,2} (\bs{\mu})$ does. For the same reason, we also have 
\begin{equation}
\mc{A}(\mr{dil}_{\theta,2}(\bs{\mu})) = \mc{A}(\bs{\mu}) \,.
\end{equation}
The map $\mr{dil}_{\theta,2}$ performs a \emph{rescaling} on the measure $\bs{\mu}$ in the sense specified by the following lemma.

\begin{lemma}\label{lem:rescaling}
Given a measure $\bs{\mu}\in \mc{M}(\Omega)$ and a 1-homogeneous functional $\sigma: \Omega \rightarrow \mathbb{R}$ such that $\sigma(\mr{z})>0$ for $\bs{\mu}$-almost every path $\mr{z}$, suppose that 
\begin{equation}
C\coloneqq \left(\int_\Omega (\sigma(\mr{z}) )^2 \, \ed \bs{\mu}(\mr{z}) \right)^{1/2} < +\infty \,;
\end{equation}
if $\tilde{\bs{\mu}} = \mr{dil}_{\sigma/C,2}(\bs{\mu})$ then $\tilde{\bs{\mu}}(\Omega) = 1$ and
\begin{equation}\label{eq:rescaledsupportlemma}
\tilde{\bs{\mu}}(\{\mr{z}\in\Omega\,;\, \sigma(\mr{z}) = C \} ) = 1\,.
\end{equation}
\end{lemma}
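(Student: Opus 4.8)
I want to show two things about $\tilde{\bs\mu} = \mathrm{dil}_{\sigma/C,2}(\bs\mu)$: that it is a probability measure, and that it is concentrated on the level set $\{\sigma = C\}$. The strategy is to unwind the definition of the dilation map, $\mathrm{dil}_{\theta,2}(\bs\mu) = \mathrm{prod}_{\theta\#}(\theta^2\bs\mu)$ with $\theta = \sigma/C$, and compute the total mass directly. First I would verify well-definedness: since $\sigma(\mr z)>0$ for $\bs\mu$-almost every path and $0<C<+\infty$, the functional $\theta = \sigma/C$ is strictly positive $\bs\mu$-a.e., so $\mathrm{prod}_\theta$ and hence $\mathrm{dil}_{\theta,2}$ are defined $\bs\mu$-a.e.\ as required.

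\textbf{Total mass.} For the mass computation, I would use that pushing forward by $\mathrm{prod}_\theta$ preserves total mass, so
\begin{equation}
\tilde{\bs\mu}(\Omega) = \big(\mathrm{prod}_{\theta\#}(\theta^2\bs\mu)\big)(\Omega) = (\theta^2\bs\mu)(\Omega) = \int_\Omega \theta(\mr z)^2\,\ed\bs\mu(\mr z) = \frac{1}{C^2}\int_\Omega \sigma(\mr z)^2\,\ed\bs\mu(\mr z) = \frac{C^2}{C^2} = 1,
\end{equation}
where the last equality uses exactly the definition $C = \big(\int_\Omega \sigma^2\,\ed\bs\mu\big)^{1/2}$. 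This gives $\tilde{\bs\mu}(\Omega)=1$.

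\textbf{Concentration on the level set.} This is the step requiring the most care, and I expect it to be the main (if modest) obstacle. The key observation is how $\sigma$ transforms under $\mathrm{prod}_\theta$. Since $\mathrm{prod}_\theta(\mr z)$ is the path $t\mapsto[x_t, r_t/\theta(\mr z)]$, and $\sigma$ is $1$-homogeneous in the radial coordinate, I would argue that $\sigma(\mathrm{prod}_\theta(\mr z)) = \sigma(\mr z)/\theta(\mr z) = C$ for $\bs\mu$-almost every $\mr z$. Here I am treating $\theta(\mr z)$ as a fixed positive scalar for each fixed $\mr z$, so that rescaling the radial part by $1/\theta(\mr z)$ and invoking $1$-homogeneity yields the factor $1/\theta(\mr z)$; substituting $\theta=\sigma/C$ gives precisely $C$. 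Consequently the image path $\mathrm{prod}_\theta(\mr z)$ lies in the set $\{\sigma = C\}$ for $\bs\mu$-a.e.\ $\mr z$, i.e.\ $\mathrm{prod}_\theta$ maps (a full-measure subset of) $\Omega$ into $\{\sigma=C\}$.

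\textbf{Conclusion.} With this in hand, I would conclude by the definition of push-forward: since $\mathrm{prod}_\theta^{-1}(\{\sigma=C\})$ contains the $\theta^2\bs\mu$-full-measure set on which the previous identity holds,
\begin{equation}
\tilde{\bs\mu}(\{\mr z\in\Omega\,;\,\sigma(\mr z)=C\}) = (\theta^2\bs\mu)\big(\mathrm{prod}_\theta^{-1}(\{\sigma=C\})\big) = (\theta^2\bs\mu)(\Omega) = 1,
\end{equation}
which is \eqref{eq:rescaledsupportlemma}. The only genuine subtlety to pin down carefully is the $1$-homogeneity substitution $\sigma(\mathrm{prod}_\theta(\mr z)) = \sigma(\mr z)/\theta(\mr z)$: one must confirm that the functional version of $1$-homogeneity (applied with the constant $\lambda = 1/\theta(\mr z)$, which is legitimate because $\theta(\mr z)$ is a constant once the path is fixed) genuinely gives this scaling, and that the null set where $\sigma\le 0$ is harmless. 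Everything else is a direct manipulation of push-forwards and total masses.
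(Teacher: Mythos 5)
Your proof is correct and follows essentially the same route as the paper's: both arguments hinge on the observation that $1$-homogeneity gives $\sigma(\mathrm{prod}_\theta(\mr{z})) = \sigma(\mr{z})/\theta(\mr{z}) = C$ for $\bs{\mu}$-a.e.\ path, and then compute the mass of $\{\sigma = C\}$ under $\tilde{\bs{\mu}} = \mathrm{prod}_{\theta\#}(\theta^2\bs{\mu})$ via the push-forward change of variables, yielding $\frac{1}{C^2}\int_\Omega \sigma^2\,\ed\bs{\mu} = 1$. Your write-up is if anything slightly more explicit about well-definedness of $\mathrm{prod}_\theta$ on the full-measure set $\{\sigma>0\}$, which the paper leaves implicit.
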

\begin{proof}
We prove this by direct calculation. Let $\theta \coloneqq \sigma/C$. By 1-homogeneity of $\sigma$, for $\bs{\mu}$-almost every path $\mr{z}$
\begin{equation}
\sigma(\mr{prod}_{\theta}(\mr{z})) = \frac{\sigma(\mr{z})}{|\theta(\mr{z})|} =C\,.
\end{equation}
Then,
\begin{equation}
\begin{aligned}
\int_{\{\mr{z}\in\Omega\,;\, \sigma(\mr{z}) = C \}}  \ed\tilde{{\bs{\mu}}}(\mr{z}) & = \int_{\{\mr{z}\in\Omega\,;\, \sigma(\mr{z}) = C \}}  \ed \mr{prod}_{\theta\#} (\theta^2 \bs{\mu})(\mr{z}) \\ 
& =  \int_{\{\mr{z}\in\Omega\,;\, \sigma( \mr{prod}_{\theta}(\mr{z})) = C \}}  \theta^2 \ed \bs{\mu}(\mr{z}) \\
& =  \frac{1}{C^2} \int_\Omega (\sigma(\mr{z}) )^2 \ed \bs{\mu}(\mr{z}) = 1 \,.
\end{aligned}
\end{equation}
By similar calculations we also have $\tilde{\bs{\mu}}(\Omega) = 1$.
\end{proof}

Besides the rescaling operator and lemma \ref{lem:rescaling}, we will also need the following result which will allow us to construct suitable minimizers of the action in problem \ref{prob:generalizedunb}. 

\begin{lemma}\label{lem:compactness}
The set of measures with uniformly bounded action $\mc{A}(\bs{\mu})\leq C$ and satisfying the homogeneous constraint in equation \eqref{eq:constraintsunb} is relatively sequentially compact for the narrow topology.
\end{lemma}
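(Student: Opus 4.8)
The plan is to establish tightness of the family and invoke Prokhorov's theorem (Theorem \ref{th:prok}), which reduces the claim to producing, for each $\epsilon>0$, a single compact set $K_\epsilon\subset\Omega$ with $\bs{\mu}(\Omega\setminus K_\epsilon)<\epsilon$ uniformly over all $\bs{\mu}$ in the family. Since $\Omega=C^0([0,T];\cone)$ with $\cone$ a Polish space, relatively compact subsets of $\Omega$ are characterized by the Arzel\`a--Ascoli theorem: equicontinuity together with pointwise relative compactness in $\cone$. The pointwise condition will be secured by confining paths to a sublevel set $\cone_R$, which is compact because $M$ is compact (it is the image of the compact set $M\times[0,R]$ under the quotient map defining the cone).

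First I would record two pathwise estimates valid for curves of finite action. By Cauchy--Schwarz and the fact that $d_{\cone}(z_s,z_t)$ is bounded by the length of $\mr{z}|_{[s,t]}$,
\[
d_{\cone}(z_s,z_t)\le \int_s^t \|\dot{z}_\tau\|_{g_{\cone}}\,\ed\tau \le |t-s|^{1/2}\,\mc{A}(\mr{z})^{1/2},
\]
so finite action yields a $1/2$-H\"older modulus of continuity controlled by $\mc{A}(\mr{z})$. Moreover, since $\|\dot{z}_\tau\|_{g_{\cone}}^2 = r_\tau^2\|\dot{x}_\tau\|_g^2 + \dot{r}_\tau^2 \ge \dot{r}_\tau^2$, the radial component is $1$-Lipschitz against $d_{\cone}$, which after one more application of Cauchy--Schwarz gives the oscillation bound $\sup_t r_t \le \inf_t r_t + T^{1/2}\mc{A}(\mr{z})^{1/2}$.

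Next I would convert these estimates into uniform control over the family by Markov's inequality. The action bound $\int_\Omega \mc{A}(\mr{z})\,\ed\bs{\mu}\le C$ bounds the measure of $\{\mc{A}(\mr{z})>\Lambda\}$ by $C/\Lambda$. For the radial confinement, test the marginal constraint \eqref{eq:constraintsunb} with $f\equiv 1$, using $\rho_0(M)=1$, to obtain
\[
\int_\Omega \Big(\int_0^T r_t^2\,\ed t\Big)\,\ed\bs{\mu}(\mr{z}) = T,
\]
so $\big\{\int_0^T r_t^2\,\ed t > \Sigma\big\}$ has measure at most $T/\Sigma$. On a path with $\int_0^T r_t^2\,\ed t \le \Sigma$ one has $\inf_t r_t \le (\Sigma/T)^{1/2}$, and combining this with the oscillation bound gives $\sup_t r_t \le (\Sigma/T)^{1/2} + (T\Lambda)^{1/2}=:R$. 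Given $\epsilon>0$, I would then choose $\Lambda=2C/\epsilon$, $\Sigma=2T/\epsilon$, and set $G_\epsilon := \{\mr{z}\in\Omega\,;\, \mc{A}(\mr{z})\le\Lambda,\ \int_0^T r_t^2\,\ed t\le\Sigma\}$, so that $\bs{\mu}(\Omega\setminus G_\epsilon)<\epsilon$ uniformly, while every path in $G_\epsilon$ lies in the compact set $\cone_R$ and shares the common H\"older modulus $\delta\mapsto\Lambda^{1/2}\delta^{1/2}$. By Arzel\`a--Ascoli, $K_\epsilon:=\overline{G_\epsilon}$ is compact in $\Omega$, yielding tightness.

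The main obstacle is the radial confinement: unlike Brenier's incompressible model, where the base domain is already compact, here the cone is unbounded and paths may a priori escape to arbitrarily large radii. This is exactly where the homogeneous marginal constraint, tested against $f\equiv 1$, is indispensable, since it controls the time-integrated second moment of the radius; combining this integral bound with the pathwise oscillation estimate furnished by the finite action is what upgrades it to a uniform supremum bound on a set of measure close to one.
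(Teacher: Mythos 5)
Your proof is correct, and its overall skeleton --- Prokhorov's theorem, equicontinuity via the H\"older estimate $d_\cone(z_s,z_t)\le \mathcal{A}(\mathrm{z})^{1/2}|t-s|^{1/2}$, confinement of paths to the compact sets $\Omega(\cone_R)$, Ascoli--Arzel\`a, and Markov's inequality on the action --- matches the paper's. Where you genuinely diverge is the radial confinement step. The paper localizes in time: it tests the homogeneous marginal constraint against bump functions supported near a fixed time $t^*$, shrinks the support to obtain the single-time tail bound $\bs{\mu}(\{\mathrm{z}\,;\,r_{t^*}>R\})\le 2/R^2$, and then combines this (at $t^*=T/2$) with the oscillation estimate coming from the action to bound $\bs{\mu}(\Omega\setminus\Omega(\cone_R))$. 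You instead test the constraint just once, with $f\equiv 1$, to get $\int_\Omega\int_0^T r_t^2\,\mathrm{d}t\,\mathrm{d}\bs{\mu}(\mathrm{z})=T$, apply Markov to the functional $\mathrm{z}\mapsto\int_0^T r_t^2\,\mathrm{d}t$, and use the pigeonhole observation $\inf_t r_t\le(\Sigma/T)^{1/2}$ together with the same oscillation bound $\sup_t r_t\le\inf_t r_t+T^{1/2}\mathcal{A}(\mathrm{z})^{1/2}$. Your route is shorter and avoids both the bump functions and the limiting argument in the width of their support; the paper's route yields the slightly stronger intermediate fact that the radial tail bound $O(1/R^2)$ holds at every fixed time, which is information of independent use. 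One cosmetic slip: with your choices $\Lambda=2C/\epsilon$, $\Sigma=2T/\epsilon$ you only get $\bs{\mu}(\Omega\setminus G_\epsilon)\le\epsilon$ rather than $<\epsilon$; replace the factor $2$ by $4$ (or run the argument with $\epsilon/2$) to match the strict inequality in the tightness criterion of Theorem \ref{th:prok}.
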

\begin{proof}
Due to Therorem~\ref{th:prok}, it is sufficient to prove that sequences of admissible measures are tight. 
For a given path $\mr{z}$ with $\mathcal{A}(\mr{z}) \leq Q$, for all $0\leq s \leq t \leq T$,
\begin{equation}\label{eq:holder}
d_\cone(z_s,z_t) \leq \int_s^t | \dot{z}_{t^*} |_{g_{\cone}} \, \ed t^* \leq Q^{1/2} |t-s|^{1/2}\,,
\end{equation}
which implies that level sets of $\mc A(\mr{z})$ are equicontinuous. Consider now the set
\begin{equation}
\Omega_R \coloneqq \Omega(\mc{C}_R)=\{\mr{z} \in \Omega\,;\,  \forall\,t \in[0,T]\, ,\, r_t \leq R \} \,;
\end{equation}
For any $Q>0$, the set $\{\mr{z}\in \Omega_R\,;\, \mc{A}(\mr{z}) \leq Q\}$ is also equicontinuous; moreover, since paths in this set are bounded at any time, it is contained in a compact subset of $\Omega$, by the Ascoli-Arzel\`a theorem.

In order to use such sets to prove tightness we need to be able to control the measure of $\Omega\setminus \Omega_R$. In particular, we now show that there exists a constant $C'>0$ such that
\begin{equation}\label{eq:measlargeR}
\bs{\mu}(\Omega\setminus \Omega_R) \leq \frac{C'}{R^2}\,.
\end{equation}
\corr{
In order to show this, consider first the following set of paths
\begin{equation}
\{\mr{z} \in \Omega\,;\,  \forall\,t \in [0,T] \, ,\, r_t > R \} \,.
\end{equation}
Integrating the constraint in equation \eqref{eq:constraintsunb} over such a set with $f=1$, we obtain
\begin{equation}
\bs{\mu}(\{\mr{z} \in \Omega\,;\,  \forall\,t \in [0,T] \, ,\, r_t > R \}) \leq \frac{1}{R^2}\,. 
\end{equation}
Now, consider the set 
\begin{equation}
 \{\mr{z} \in \Omega\setminus \Omega_R \,; \mc{A}(\mr{z})<Q \}\,. 
\end{equation}
For any $\mr{z}$ in this set, there exists $t^*\in[0,T]$ such that $r_{t^*}>R$. Moreover, since $\mc{A}(\mr{z})<Q$, by equation \eqref{eq:holder},
\begin{equation} 
|r_{t} - r_{t^*}| \leq d_\cone(z_{t},z_{t^*}) \leq Q^{1/2} |{t}-t^*|^{1/2}\,,
\end{equation}
for all $t\in[0,T]$, which implies
\begin{equation}
r_t \geq r_{t^*}-Q^{1/2} T^{1/2} > R -Q^{1/2} T^{1/2} .
\end{equation}
In particular, if $Q\leq R^2/(4T)$, then $r_t>R/2$, or also
\begin{equation}
 \{\mr{z} \in \Omega\setminus \Omega_R \,; \mc{A}(\mr{z})<Q \} \subseteq   \{\mr{z}\in \Omega \,; \forall \,t\in [0,T]\, ,  \,r_{t} > R/2 \} \,. 
\end{equation}
Therefore, if $Q \leq R^2/(4T)$,
\begin{equation}
\begin{aligned}
\bs{\mu}(\Omega \setminus \Omega_R) & \leq  \bs{\mu}((\Omega \setminus \Omega_R)\cap \{\mr{z}\,;\, \mc{A}(\mr{z})< Q \}) + \bs{\mu}(\{\mr{z}\,;\, \mc{A}(\mr{z})\geq Q \}) \\
 & \leq  \bs{\mu}(\{\mr{z} \in \Omega \,; \forall \,t\in [0,T]\, ,  \,r_{t} > R/2\}) + \frac{C}{Q} \\
  & \leq  \frac{4}{R^2} + \frac{C}{Q} \,.\\
\end{aligned}
\end{equation}
Taking $Q =R^2/(4T)$, we deduce that
\begin{equation}
\bs{\mu}(\Omega\setminus \Omega_R) \leq \frac{4(CT+1)}{R^2}\,,
\end{equation}
which proves equation \eqref{eq:measlargeR}.} 

Recall that $\{\mr{z}\in \Omega_R\, ;\, \mc{A}(\mr{z}) \leq Q\}$ is contained in a compact set for any $Q>0$ and $R>0$. For any $\epsilon>0$, set  
$R= (8(CT+1)/\epsilon)^{1/2}$. For any admissible $\bs{\mu}$, we have
\begin{equation}
\begin{aligned}
{\bs \mu}(\Omega\backslash  \{\mr{z}\in \Omega_R\, ;\, \mc{A}(\mr{z}) \leq 2C\epsilon^{-1} \} ) 
& \leq {\bs \mu}(\Omega\backslash \{\mr{z}\,; {\mc A}(\mr{z}) \leq 2C \epsilon^{-1}\}) +  {\bs \mu}(\Omega\backslash  \Omega_{R})\\
&\leq \frac{\epsilon}{2C} \int_{\Omega} {\mc A}(\mr{z}) \,\ed {\bs \mu}(\mr{z}) + \frac{\epsilon}{2} \leq \epsilon \,,
\end{aligned}
\end{equation}
which proves tightness.
\end{proof}

We are now ready to prove existence of optimal solutions for the generalized $H(\mr{div})$ geodesic problem. \corr{
Note that due to lemma \ref{lem:compactness}, we can always extract a converging subsequence from any minimizing sequence of problem \ref{prob:generalizedunb}. However, this approach fails to produce a minimizer, since convergence in the narrow topology is not sufficient to pass the constraints to the limit. Note in particular that this is also true if we enforce the strong coupling constraint \eqref{eq:coupling} instead of its homogeneous version in \eqref{eq:couplingconstr}. On the other hand, by choosing this latter as coupling constraint, we can use lemma \ref{lem:rescaling} to construct an appropriate minimizing sequence for which all constraints pass to the limit. We follow this strategy in the proof of proposition \ref{prop:existenceunbounded} below.
}
\begin{proof}[Proof of proposition \ref{prop:existenceunbounded}]
The functional $\mc A(\mr{z})$ is lower semi-continuous; hence so is $\mc A({\bs \mu})$. 
Consider a minimizing sequence $\bs{\mu}_n$ with $n \in \mathbb{N}$. By assumption we can take $\mc{A}(\bs{\mu}_n) \leq C$ for all $n\in \mathbb{N}$. Let $\mr{o}:t \in [0,T] \rightarrow {o} \in \cone$ the path on the cone assigning to every time the apex of the cone ${o}$. Let $\bs{\mu}_n^\mr{o}\coloneqq \bs{\mu}_n\mres\Omega^\mr{o} \in \mc{M}(\Omega)$ the restriction of $\bs{\mu}_n$ to $\Omega^\mr{o}\coloneqq \Omega\setminus\{\mr{o}\}$. Such an operation preserves both the action and the constraints.

Let $\sigma: \Omega \rightarrow \mathbb{R}$ be the 1-homogeneous functional defined by
\begin{equation}\label{eq:sigmaz}
\sigma(\mr{z}) \coloneqq \left( r_0^2 + r_T^2 + \int_0^T r_{t}^2\,\ed t \right)^{1/2}\,.
\end{equation}
For any $\bs{\mu}_n^\mr{o}$ in the sequence, we obviously have that $\sigma(\mr{z})>0$ for $\bs{\mu}_n^\mr{o}$-almost every path. Moreover, since $\bs{\mu}_n^\mr{o}$ satisfies both the homogeneous marginal and coupling constraint, for all $n\in \mathbb{N}$, 
\begin{equation}
 \int_\Omega \sigma(\mr{z})^2\, \ed \bs{\mu}_n(\mr{z}) = T +2 \,.
\end{equation}
Hence we can apply lemma \ref{lem:rescaling} and define a sequence $\tilde{\bs{\mu}}_n \in \mc{P}(\Omega)$ by $\tilde{\bs{\mu}}_n  \coloneqq \mr{dil}_{\sigma/{\sqrt{T+2}},2} \bs{\mu}_n^\mr{o} $. In particular, for all $n\in \mathbb{N}$,  $\tilde{\bs{\mu}}_n$ is concentrated on the set of paths such that $\sigma(\mr{z})= \sqrt{T+2}$, i.e.\
\begin{equation}\label{eq:rescaledsupport}
 \tilde{\bs{\mu}}_n\left (\left \{\mr{z}\in\Omega\,;\, r_0^2 + r_T^2 + \int_0^T r_{t}^2\,\ed t =T +2 \right\} \right) = 1\,.
\end{equation}
Moreover, $\tilde{\bs{\mu}}_n$ satisfies the homogeneous constraint and the coupling constraint, since these are both  2-homogeneous in the radial direction, and for the same reason $\mc{A}(\tilde{\bs{\mu}}_n) =  \mc{A}(\bs{\mu}_n) \leq C$.  This is enough to apply lemma \ref{lem:compactness}; thus, we can extract a subsequence $(\tilde{\bs{\mu}}_n)_n \rightharpoonup \tilde{\bs{\mu}}_\infty \in \mc{P}(\Omega)$. 

We now show that for any $f\in C^0([0,T]\times M)$ the functional 
\corr{\begin{equation}
{\mc{F}(\mr{z}) \coloneqq \int_0^T |f(t,x_t)| r_t^2 \,\ed t }
\end{equation}}
is uniformly integrable with respect to the sequence $(\tilde{\bs{\mu}}_n)_n$, that is, for any $\epsilon>0$ there exists a constant $K>0$ such that for all $n \in \mathbb{N}$ 
\begin{equation}
\int_{\Omega, \mc{F}(\mr{z})>K} \mc{F}(\mr{z}) \,\ed (\tilde{\bs{\mu}}_n)_n(\mr{z}) < \epsilon\,.
\end{equation}
It is sufficient to consider the case $\| f \|_{C^0} = 1$, because the case $\| f \|_{C^0} = 0$ is trivial and otherwise we can always rescale the functional by dividing it by $\| f \|_{C^0}$. 
Recall the definition of the functional $\sigma$ in equation \eqref{eq:sigmaz}; we have 
\begin{equation}
\int_{\Omega, \mc{F}(\mr{z})>K} \mc{F}(\mr{z}) \,\ed (\tilde{\bs{\mu}}_n)_n(\mr{z}) \leq \int_{\Omega, \sigma(\mr{z})^2>K} \sigma(\mr{z})^2 \ed (\tilde{\bs{\mu}}_n)_n(\mr{z})\,.
\end{equation}
 However, by equation \eqref{eq:rescaledsupport} the right-hand side is zero if $K>T+2$, which proves uniform integrability. Hence, using lemma \ref{lem:unifint}, we deduce that $\tilde{\bs{\mu}}_\infty$ satisfies the homogeneous marginal constraint.  Similarly, we can deduce that $\tilde{\bs{\mu}}_\infty$ also satisfies the homogeneous coupling constraint since $(e_0,e_T)_\# (\tilde{\bs{\mu}}_n)_n$ is concentrated on $\mc{C}_R^2$ with $R = \sqrt{T+2}$; hence it is an optimal solution of problem \ref{prob:generalizedunb}.
\end{proof}

\corr{
\begin{remark}
Given $\fmap\in \mr{Diff}(M)$, set $\bs{\gamma}= [(\mr{Id},1),(\fmap,\sqrt{|\mr{Jac}(\fmap)|})]_\#\rho_0$. For such a coupling, there always exists a dynamic plan $\bs{\mu}^*$ such that $\mc{A}(\bs{\mu}^*)<+\infty$. This is constructed explicitly in lemma \ref{lem:diameter}. Therefore, the minimum of the action in problem \ref{prob:generalizedunb} is attained.
\end{remark}
}

In general, we cannot ensure that there exists a minimizer $\bs{\mu}$ of problem \ref{prob:generalizedunb} satisfying the \emph{strong coupling constraint}:
\begin{equation}
(e_0,e_T)_\# \bs{\mu} = \bs{\gamma}\,.
\end{equation}
However, we can easily obtain a characterization for the existence of such minimizers when $\bs{\gamma}$ is deterministic. This relies on the following crucial result which allows us to isolate the part of the solution involving the cone singularity.

\begin{proposition} \label{prop:rescaling}
Suppose that $\bs{\gamma}= [(\mr{Id},1),(\fmap,\sqrt{|\mr{Jac}(\fmap)|})]_\#\rho_0$. 
Any measure $\bs{\mu}\in\mc{M}(\Omega)$ satisfying the homogeneous coupling constraint admits the decomposition
\begin{equation}
\bs{\mu} = \tilde{\bs{\mu}} + \tilde{\bs{\mu}}^0\,,
\end{equation}
where  $\tilde{\bs{\mu}} = \bs{\mu}\mres \{ \mr{z} \in \Omega\,;\, r_0\neq 0 \,,\, r_T \neq 0\}$ and $\tilde{\bs{\mu}}^0 = \bs{\mu}\mres \{ \mr{z} \in \Omega\,;\, r_0 = r_T = 0\}$. Moreover $\tilde{\bs{\mu}}^1 \coloneqq \mr{dil}_{r_0,2} \tilde{\bs{\mu}}$ satisfies the strong coupling constraint, i.e. $(e_0,e_T)_\#  \tilde{\bs{\mu}}^1 = \bs{\gamma}$.
\end{proposition}
\begin{proof}
Let $\bs{\mu}\in\mc{M}(\Omega)$ be any dynamic plan satisfying the homogeneous coupling constraint. We decompose $\bs{\mu} = \tilde{\bs{\mu}} + \tilde{\bs{\mu}}^0$ where 
\begin{equation}
\tilde{\bs{\mu}} \coloneqq \bs{\mu} \mres \{ \mr{z}\in \Omega\,;\, r_0 \neq 0\}\,,\qquad \tilde{\bs{\mu}}^0 \coloneqq \bs{\mu} \mres \{ \mr{z}\in \Omega\,;\, r_0 = 0\}\,.
\end{equation}
Consider the 1-homogeneous functional $\tilde{\sigma}(\mr{z}) : \Omega \rightarrow \mathbb{R}$ defined by $\tilde{\sigma}(\mr{z}) = r_0$. Clearly $\tilde{\sigma}(\mr{z})>0$ for $\tilde{\bs{\mu}}$-almost every path $z$. Moreover, we have 
\begin{equation}
\int_\Omega (\tilde{\sigma}(\mr{z}))^2 \,\ed \tilde{\bs{\mu}}(\mr{z}) = \int_\Omega r_0^2 \,\ed \tilde{\bs{\mu}}(\mr{z}) =1\,.
\end{equation}
Hence, by lemma \ref{lem:rescaling}, the measure $\tilde{\bs{\mu}}^1 \coloneqq \mr{dil}_{r_0,2} \tilde{\bs{\mu}}\in \mc{P}(\Omega)$ is concentrated on paths such that $r_0=1$. Moreover, $\tilde{\bs{\mu}}^0+ \tilde{\bs{\mu}}^1$ still satisfies the homogeneous coupling constraint and in particular, for any $\alpha \in [0,2)$,
\begin{equation}
\begin{aligned}
\int_\Omega r_T^\alpha  \, \ed \tilde{\bs{\mu}}^1(\mr{z}) & = \int_\Omega r_0^{2-\alpha } r_T^\alpha  \, \ed \tilde{\bs{\mu}}^1(\mr{z}) \\
& = \int_\Omega r_0^{2-\alpha } r_T^\alpha  \, \ed (\tilde{\bs{\mu}}^0+\tilde{\bs{\mu}}^1)(\mr{z}) \\
& = \int_M \zeta^\alpha  \, \ed \rho_0\,. 
\end{aligned}
\end{equation}
Taking the limit for $\alpha \rightarrow 2$, by the dominated convergence theorem, 
\begin{equation}
\int_\Omega r_T^2  \, \ed \tilde{\bs{\mu}}^1(\mr{z}) = \int_M \zeta^2  \, \ed \rho_0 = 1\, .
\end{equation}
In turn, this implies that
\begin{equation}
\int_\Omega r_T^2  \, \ed \tilde{\bs{\mu}}^0(\mr{z}) = 0\, ,
\end{equation}
which means that $\tilde{\bs{\mu}}^0$-almost every path $\mr{z}$ has $r_T=0$. This proves that $\tilde{\bs{\mu}}^0 = \bs{\mu}\mres \{ \mr{z} \in \Omega\,;\, r_0 = r_T = 0\}$ and that $\tilde{\bs{\mu}}$ satisfies the homogeneous coupling constraint.

Next, we prove that $(e_0,e_T)_\#  \tilde{\bs{\mu}}^1 = \bs{\gamma}$. For any $g \in C^0(M^2)$ we can take $f =g r_0^2$ in equation \eqref{eq:couplingconstr} yielding 
\begin{equation}
\int_\Omega g(x_0,x_T) \, \ed \tilde{\bs{\mu}}^1(\mr{z}) = \int_{M} g(x,\fmap(x)) \, \ed \rho_0(x)\,.
\end{equation}
Similarly, letting $\zeta \coloneqq \sqrt{|\mr{Jac}(\fmap)|}$,
\begin{equation}
\begin{aligned}
\int_\Omega (r_T-\zeta(x_0))^2 \, \ed \tilde{\bs{\mu}}^1(\mr{z}) &= \int_\Omega  (r_T^2 + \zeta(x_0)^2 -2\zeta(x_0) r_T) \, \ed \tilde{\bs{\mu}}^1(\mr{z}) \\
&= \int_\Omega (r_T^2 + r_0^2\zeta(x_0)^2 -2\zeta(x_0) r_0r_T) \, \ed \tilde{\bs{\mu}}^1(\mr{z}) \\ 
&=  2\int_M \zeta(x)^2 \ed \rho_0(x)   -2 \int_M \zeta(x)^2 \ed \rho_0(x)=0 \,,\\
\end{aligned}
\end{equation}
which means that for $\tilde{\bs{\mu}}^1$-almost every path $r_T = \zeta(x_0)$. Then, for any continuous bounded function $f: \cone^2 \rightarrow \mathbb{R}$, we have
\begin{equation}
\begin{aligned}
\int_\Omega f(z_0, z_T) \, \ed \tilde{\bs{\mu}}^1(\mr{z}) & =  \int_\Omega f([x_0,1], [x_T,\zeta(x_0)]) \, \ed \tilde{\bs{\mu}}^1(\mr{z}) \\ 
& = \int_{M} f([x,1], [\varphi(x),\zeta(x)]) \, \ed \rho_0(x) \,,
\end{aligned} 
\end{equation}
which proves the second part of the proposition. Finally, we must also have $\tilde{\bs{\mu}} = \bs{\mu}\mres \{ \mr{z} \in \Omega\,;\, r_0\neq 0 \,,\, r_T \neq 0\}$, since by definition of the dilation map
\begin{equation}
\int_{\{\mr{z}\in\Omega\,;\, r_T =0\}} r_0^2 \,\ed \tilde{\bs{\mu}} = \int_{\{\mr{z}\in\Omega\,;\, r_T =0\}} r_0^2 \,\ed \tilde{\bs{\mu}}^1 =  \tilde{\bs{\mu}}^1(\{\mr{z}\in\Omega\,;\, r_T =0\}) = 0\,. 
\end{equation}
\end{proof}

\begin{remark} \label{rem:minimalclass}
It should be noted that proposition \ref{prop:rescaling} can be proved also if the coupling constraint in equation \eqref{eq:couplingconstr} is enforced only for homogeneous functions $f\in C^0(\cone^2)$ in the form  $f(z_0,z_1)=g(x_0,x_1)  r_0^{2-\alpha}r_1^\alpha$ and $\alpha\in[0,2]$, for example. Nonetheless, if we defined the constraint in this way, given the fact that $\tilde{\bs{\mu}}^1$ satisfies the strong coupling constraint, we would still retrieve that (when the coupling is deterministic) $\bs{\mu}$ satisfies the coupling constraint with respect to any homogeneous function.
\end{remark}

\begin{corollary}[Existence of minimizers satisfying the strong coupling constraint] \label{cor:existencestrong}
\corr{Suppose that $\bs{\gamma}= [(\mr{Id},1),(\fmap,\sqrt{|\mr{Jac}(\fmap)|})]_\#\rho_0$ and let $\bs{\mu}\in \mc{M}(\Omega)$ (not necessarily a probability measure) be a minimizer of problem \ref{prob:generalizedunb}. Then, if
\begin{equation}\label{eq:r00}
\bs{\mu}(\{\mr{z}\in\Omega\,;\, r_0 =r_T =0\}) = 0\,,
\end{equation}   
the measure $\bs{\mu}$ can be rescaled (in the sense of lemma \ref{lem:rescaling}) to a minimizer satsifying the strong coupling constraint.}
\end{corollary}

The proofs of proposition \ref{prop:existenceunbounded} and \ref{prop:rescaling} give us several insights on the nature of the generalized solutions of the $H(\mr{div})$ geodesic problem. First of all, it is evident that such solutions can only be unique up to rescaling. In fact, since all constraints are homogeneous and preserved by rescaling, given one minimizer one can generate others using the dilation map as in lemma \ref{lem:rescaling}. In addition, if the coupling is deterministic, even using rescaling, in principle one might not be able to find a minimizer satisfying the coupling constraint in the classical sense. By proposition \ref{prop:rescaling}, this happens if all minimizers charge paths which start and end at the apex of the cone and are not trivial. In this case the optimal solutions use the 
the apex to enforce the homogeneous marginal constraint on some time interval contained in $(0,T)$. We will refer to such minimizers as \emph{singular solutions} since they involve the cone singularity. More precisely:
\begin{definition}[Singular generalized $H(\mr{div})$ geodesics] \label{def:singular}
A singular solution of the generalized $H(\mr{div})$ geodesic problem is a minimizer $\bs{\mu}\in\mc{P}(\Omega)$ such that 
\begin{equation}
\bs{\mu}(\{\mr{z}\in \Omega\setminus \{\mr{o}\}\,;\, r_0=r_T=0\})>0\,,
\end{equation}
where $\mr{o}:t\in[0,T] \rightarrow o \in \cone$.
\end{definition}
Proposition \ref{prop:rescaling} can also help us visualize such solutions. 
In fact, for deterministic boundary conditions, to any singular minimizer $\bs{\mu}$ we can still associate a measure $\tilde{\bs{\mu}}^1=\mr{dil}_{r_0,2} \tilde{\bs{\mu}}$ which satisfies the strong coupling constraint but not necessarily the homogeneous marginal constraint. 
In section \ref{sec:examples} we will construct some specific examples of singular minimizers, which will provide some intuition on their meaning.

\section{Existence and uniqueness of the pressure}\label{sec:existence}
In the previous section, we proved existence of minimizers of the generalized $H(\mr{div})$ geodesic problem. In general, given that all constraints are homogeneous, such minimizers are only defined up to rescaling. However, even using rescaling, it might not always be possible to find a minimizer that satisfies the strong coupling constraint. Here, we show that independently of this, the pressure field $P$ in  \eqref{eq:geodesiceulerian} is uniquely defined as a distribution for any given deterministic coupling constraint. This reproduces a similar result proved by Brenier for the incompressible Euler case \cite{brenier1993dual}.

The idea is to extend the set of admissible generalized flows in order to define appropriate variations of the action. By analogy to the Euler case, we consider dynamic plans whose homogeneous marginals are not the Lebesgue measure $\rho_0$, but are sufficiently close to it. Given a dynamic plan $\bs{\nu} \in \mc{P}(\Omega)$ we denote by $\rho^{\bs{\nu}}: [0,T] \times M \rightarrow \mathbb{R}$ the function defined by
\begin{equation}
\rho^{\bs{\nu}}(t,\cdot) \coloneqq \frac{\ed {\mf{h}_t^2 \bs{\nu}}}{\ed \rho_0}  \,,
\end{equation}
for any $t\in[0,T]$.
For an admissible generalized flow $\bs{\nu}$, $\rho^{\bs \nu} =1$. Dynamic plans $\bs{\nu}$ with $\rho^{\bs \nu} \neq 1$ correspond to generalized automorphisms of the cone with a mismatch between the radial variable and the  Jacobian of the flow map on the base space.  

 \begin{definition}[Almost diffeomorphisms]\label{def:almost} 
 A generalized almost diffeomorphism is a probability measure $\bs{\nu}\in \mc{P}(\Omega)$ such that $\rho^{\bs{\nu}} \in C^{0,1}([0,T]\times M)$ and
 \begin{equation}\label{eq:boundrho}
 \| \rho^{\bs{\nu}} -1 \|_{C^{0,1}( [0,T]\times M)} \leq \frac{1}{2}\,.
 \end{equation}
 \end{definition}

For any $\rho \in C^{0,1}([0,T]\times M )$ with $\rho>0$, let $\Phi^\rho: \Omega \rightarrow \Omega$ be the map defined by
\begin{equation}
\Phi^\rho(\mr{z}) \coloneqq (t\in [0,T] \mapsto [x_t, r_t \sqrt{\rho(t,x_t)}] \in \cone)\,.
\end{equation} 
We use this map in the following proposition, which is the equivalent of proposition 2.1 in \cite{brenier1993dual} and justifies our choice for the space of densities in definition \ref{def:almost}. 

\begin{proposition}\label{prop:actionboundgendiff}
Fix a $\rho\in C^{0,1}( [0,T] \times M)$ such that
\begin{equation}
 \| \rho -1 \|_{C^{0,1}} \leq \frac{1}{2}\,, \qquad \rho(0,\cdot) = \rho(1,\cdot) = 1\,.
\end{equation}
Then, given any dynamic plan $\bs{\mu} \in \mc{P}(\Omega)$ with finite action $\mc{A}(\bs{\mu})<+\infty$, satisfying the homogeneous constraint in equation \eqref{eq:constraintsunb}, i.e.\ $\rho^{\bs{\mu}} = \rho_0$, and the coupling constraint \eqref{eq:couplingconstr}, the dynamic plan $\bs{\nu} \coloneqq \Phi^\rho_\# \bs{\mu} \in \mc{P}(\Omega)$ still satisfies the coupling constraint and we have $\rho^{\bs{\nu}} = \rho$; moreover,
\corr{
\begin{equation}\label{eq:boundalmost}
\mc{A}(\bs{\nu}) \leq \mc{A}(\bs{\mu}) +  \| \rho-1 \|_{C^{0,1}} \left( \frac{T}{2}+\mc{A}(\bs{\mu}) \right) + | \rho-1 |^2_{C^{0,1}}(T+\mc{A}(\bs{\mu}))\,.
\end{equation}}
\end{proposition}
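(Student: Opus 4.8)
The plan is to verify the three assertions in turn; the first two are soft consequences of the change of variables $\Phi^\rho$, while the energy estimate \eqref{eq:boundalmost} is the substantial part. Throughout write $\tilde{\mr z}\coloneqq\Phi^\rho(\mr z)$, so that $\tilde x_t=x_t$ and $\tilde r_t=r_t\sqrt{\rho(t,x_t)}$, abbreviate $v_t\coloneqq\|\dot z_t\|_{g_\cone}$, and denote by $\dot\rho\coloneqq\partial_t\rho(t,x_t)+g(\nabla^g\rho,\dot x_t)$ the total time derivative of $t\mapsto\rho(t,x_t)$ along a path.

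\emph{Coupling and marginal.} Because $\rho(0,\cdot)=\rho(T,\cdot)=1$, the map $\Phi^\rho$ fixes the endpoints, $\Phi^\rho(\mr z)_0=z_0$ and $\Phi^\rho(\mr z)_T=z_T$; hence $(e_0,e_T)_\#\bs{\nu}=(e_0,e_T)_\#\bs{\mu}$, so $\bs{\nu}$ inherits the homogeneous coupling constraint \eqref{eq:couplingconstr} from $\bs{\mu}$. For the marginal, given $f\in C^0(M)$ one computes $\int_\Omega f(\tilde x_t)\tilde r_t^2\,\ed\bs{\nu}=\int_\Omega f(x_t)\rho(t,x_t)r_t^2\,\ed\bs{\mu}$; feeding the continuous test function $x\mapsto f(x)\rho(t,x)$ into the marginal constraint in its pointwise form ${\mf h}^2_t(\bs{\mu})=\rho_0$ (valid for every $t$ by Lemma~\ref{lem:equivconstr}) returns $\int_M f\,\rho(t,\cdot)\,\ed\rho_0$. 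Thus ${\mf h}^2_t(\bs{\nu})=\rho(t,\cdot)\rho_0$, i.e.\ $\rho^{\bs{\nu}}=\rho$.

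\emph{The energy identity and the two easy terms.} Differentiating $\tilde r_t=r_t\sqrt{\rho(t,x_t)}$ and using $g_\cone=r^2 g+\ed r^2$ gives the pointwise identity
\begin{equation}
\|\dot{\tilde z}_t\|^2_{g_\cone}=\rho\,v_t^2+\dot r_t r_t\dot\rho+\frac{r_t^2\dot\rho^2}{4\rho}.
\end{equation}
I integrate this over $t$ and $\bs{\mu}$ and estimate the three summands, using $\tfrac12\le\rho\le\tfrac32$ (from $\|\rho-1\|_{C^{0,1}}\le\tfrac12$), the Lipschitz bound $|\dot\rho|\le|\rho-1|_{C^{0,1}}\sqrt{1+\|\dot x_t\|_g^2}$, the pointwise inequalities $\dot r_t^2\le v_t^2$ and $r_t^2\|\dot x_t\|_g^2\le v_t^2$, and the crucial identity obtained by taking $f\equiv1$ in \eqref{eq:constraintsunb}, namely $\int_\Omega\int_0^T r_t^2\,\ed t\,\ed\bs{\mu}=T$. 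The first term is immediate, $\int_\Omega\int_0^T\rho\,v_t^2\le(1+\|\rho-1\|_{C^0})\mc{A}(\bs{\mu})\le\mc{A}(\bs{\mu})+\|\rho-1\|_{C^{0,1}}\mc{A}(\bs{\mu})$. The third term is controlled by $\rho\ge\tfrac12$ together with $r_t^2\dot\rho^2\le|\rho-1|^2_{C^{0,1}}(r_t^2+r_t^2\|\dot x_t\|_g^2)$ and the two integral facts above, giving $\int_\Omega\int_0^T\frac{r_t^2\dot\rho^2}{4\rho}\le\tfrac12|\rho-1|^2_{C^{0,1}}(T+\mc{A}(\bs{\mu}))$, comfortably inside the last summand of \eqref{eq:boundalmost}.

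\emph{The cross term: the main obstacle.} The remaining term $\int_\Omega\int_0^T\dot r_t r_t\dot\rho\,\ed t\,\ed\bs{\mu}$ is the crux, being only \emph{first} order in the perturbation yet coupling the radial speed $\dot r_t$ to the density variation. Splitting $\dot\rho$ into its spatial part $g(\nabla^g\rho,\dot x_t)$ and its temporal part $\partial_t\rho$, the spatial contribution is harmless: $|\dot r_t r_t\,g(\nabla^g\rho,\dot x_t)|\le|\rho-1|_{C^{0,1}}|\dot r_t|(r_t\|\dot x_t\|_g)\le\tfrac12|\rho-1|_{C^{0,1}}v_t^2$, which integrates into the first-order budget $\|\rho-1\|_{C^{0,1}}\mc{A}(\bs{\mu})$. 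The genuine difficulty is the temporal contribution $\int_\Omega\int_0^T\dot r_t r_t\,\partial_t\rho\,\ed t\,\ed\bs{\mu}$: pairing $\dot r_t$ (whose square integrates to at most $\mc{A}(\bs{\mu})$) against $r_t\,\partial_t\rho$ (whose square integrates to at most $|\rho-1|^2_{C^{0,1}}T$) by Cauchy--Schwarz only yields a bound of order $|\rho-1|_{C^{0,1}}\sqrt{T\,\mc{A}(\bs{\mu})}$, and Young's inequality applied to this — if one insists that the $T$-factor land at second order in $|\rho-1|_{C^{0,1}}$, as \eqref{eq:boundalmost} demands — produces an $O(1)$ multiple of $\mc{A}(\bs{\mu})$ rather than the required $O(|\rho-1|_{C^{0,1}})$ multiple. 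I therefore expect the naive pointwise estimate to be insufficient for the stated constants, and the sharp bound to require the marginal constraint \eqref{eq:constraintsunb} beyond its mere pointwise bounds: exploiting that $\int_\Omega r_t^2\,h(x_t)\,\ed\bs{\mu}=\int_M h\,\ed\rho_0$ holds for \emph{every} $t$ and \emph{every} $h\in C^0(M)$ (so that a radial-velocity factor can be traded for a base-velocity factor), in conjunction with the normalizations $\rho(0,\cdot)=\rho(T,\cdot)=1$ and the identity $\int\int r_t^2=T$. This balancing of the $\mc{A}(\bs{\mu})$ and $T$ scales is the one genuinely non-routine estimate in the proposition; everything else is bookkeeping on the two displayed terms above.
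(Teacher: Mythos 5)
Your proposal stops short of being a proof: the temporal part of the cross term, $\int_\Omega\int_0^T r_t\dot r_t\,\partial_t(\rho(t,x_t))\,\ed t\,\ed\bs{\mu}(\mr{z})$, is exactly the quantity you leave unestimated, and without it \eqref{eq:boundalmost} is not established. So, judged against the statement, there is a genuine gap. What makes your attempt valuable is that your diagnosis of the obstruction is correct, and the paper's own proof stumbles at precisely this point. The paper estimates
\begin{equation*}
\int_\Omega\int_0^T r_t \dot{r}_t\, \partial_t(\rho(t,x_t))\,\ed t \,\ed \bs{\mu}(\mr{z})
\;\leq\; | \rho-1 |_{C^{0,1}} \int_\Omega \int_0^T r_t \dot{r}_t (1+ \| \dot{x}_t \|_g) \,\ed t \,\ed \bs{\mu}(\mr{z})
\;\leq\; \tfrac{1}{2} | \rho-1 |_{C^{0,1}} \mc{A}(\bs{\mu})\,.
\end{equation*}
The second inequality is legitimate for the \emph{signed} integrand: $\int_\Omega\int_0^T r_t\dot r_t\,\ed t\,\ed\bs\mu=\tfrac12\int_\Omega (r_T^2-r_0^2)\,\ed\bs\mu=0$ by Lemma \ref{lem:equivconstr}, and $r_t\dot r_t\|\dot x_t\|_g\leq\tfrac12\|\dot z_t\|^2_{g_\cone}$. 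But the first inequality is invalid: the Lipschitz bound controls only $|\partial_t(\rho(t,x_t))|$, and $\dot r_t$ has no sign, so the correct majorant carries $r_t|\dot r_t|$; once the absolute value is restored the cancellation is gone, and Young's inequality together with $\int_\Omega\int_0^T r_t^2\,\ed t\,\ed\bs\mu=T$ yields only $\int_\Omega\int_0^T r_t|\dot r_t|(1+\|\dot x_t\|_g)\leq\tfrac12(T+\mc A(\bs\mu))+\tfrac12\mc A(\bs\mu)$. This is exactly the first-order-in-$|\rho-1|_{C^{0,1}}$ term proportional to $T$ that you predicted pointwise estimates cannot avoid.

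Two constructive conclusions. First, what the argument actually proves is
$\mc{A}(\bs{\nu}) \leq \mc{A}(\bs{\mu}) + \| \rho-1 \|_{C^{0,1}}\bigl(\tfrac{T}{2}+\mc{A}(\bs{\mu})\bigr) + | \rho-1 |^2_{C^{0,1}}(T+\mc{A}(\bs{\mu}))$,
and this weaker bound is all that is used downstream: in Theorem \ref{th:existpres} the proposition only serves to show that the convex functional $\phi$ is bounded in a $C^{0,1}$-neighbourhood of $0$, for which any modulus of the form $C(T,\mc A(\bs\mu^*))\,\|\rho-1\|_{C^{0,1}}$ suffices. Second, I would not pursue your hope that the marginal constraint \eqref{eq:constraintsunb} rescues the stated constants, because the inequality \eqref{eq:boundalmost} appears to be genuinely false. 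For a deterministic plan generated by a velocity field $u$, the temporal cross term equals $\tfrac12\int_0^T\int_M \mr{div}^g u\;\partial_t\rho\,\ed\rho_0\,\ed t$. Take $M=[0,2\pi]$, $T=2$, $\phi$ a tent function with $\phi(0)=\phi(2)=0$ and $|\dot\phi|=1$, and set $u(t,x)=-N^{-2}\dot\phi(t)\sin(Nx)$, $\rho(t,x)=1-\epsilon\phi(t)\cos(Nx)$. Then the zeroth-order term $\int\int(\rho-1)\|\dot z_t\|^2_{g_\cone}$ and the spatial cross term vanish by oddness of the trigonometric integrals, the temporal cross term equals $\epsilon/(2N)$, while $\mc A(\bs\mu)=N^{-4}+\tfrac14 N^{-2}$ and $\|\rho-1\|_{C^{0,1}}\leq\epsilon(N+2)$; choosing $\epsilon=(16N^3)^{-1}$ and $N$ large, the left-hand side of \eqref{eq:boundalmost} exceeds the right-hand side by a factor approaching $4/3$. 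So the correct statement is the $T$-corrected bound above, which both your estimates and the (repaired) ones of the paper deliver.
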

\begin{proof} The fact that $\rho^{\bs{\nu}} = \rho$ and that $\bs{\nu}$ satisfies the coupling constraint follows from direct computation. As for equation \eqref{eq:boundalmost}, observe \corr{that $\bs{\mu}$-almost every path is absolutely continuous and that the map $([x,r],t)\in \cone \times [0,T] \mapsto r \sqrt{\rho(t,x)} \in \mathbb{R}_{\geq 0}$ is Lipschitz. Then, for $\bs{\mu}$-almost every path $\mr{z}$ the curve $t\in[0,T] \mapsto r_t \sqrt{\rho(t,x_t)}\in \mathbb{R}_{\geq 0}$ is also absolutely continuous and we have
\begin{equation}\label{eq:estA}
\begin{aligned}
\mc{A}(\bs{\nu}) &= \int_\Omega \int_0^T \mc{A}(\Phi^\rho(\mr{z})) \,\ed t \,\ed \bs{\mu}(\mr{z})  \\
 &= \int_\Omega \int_0^T \rho(t,x_t) |\dot{z}_t |^2_{g_{\cone}} + r_t \dot{r_t} \partial_t(\rho(t,x_t))  + r_t^2 (\partial_t \sqrt{\rho(t,x_t)})^2  \,\ed t \,\ed \bs{\mu}(\mr{z}) \\ 
 &\leq  \| \rho \|_{C^0} \mc{A}(\bs{\mu}) +  \int_\Omega \int_0^T r_t \dot{r_t} \partial_t(\rho(t,x_t))  + r_t^2 (\partial_t \sqrt{\rho(t,x_t)})^2   \,\ed t \,\ed \bs{\mu}(\mr{z}) \,.
\end{aligned}
\end{equation}}
Moreover,
\corr{
\begin{equation}
\begin{aligned}
\int_\Omega \int_0^T r_t \dot{r_t} \partial_t(\rho(t,x_t))\,\ed t \,\ed \bs{\mu}(\mr{z})  & \leq | \rho-1 |_{C^{0,1}} \int_\Omega \int_0^T  r_t |\dot{r}_t| (1+ | \dot{x}_t |) \,\ed t \,\ed \bs{\mu}(\mr{z}) \\ 
& \leq   | \rho-1 |_{C^{0,1}} \left(\frac{T}{2}+ \mc{A}(\bs{\mu}) \right)\,,
\end{aligned}
\end{equation}}
and similarly, since $\rho\geq 1/2$,
\begin{equation}
\begin{aligned}
\int_\Omega \int_0^T  r_t^2 (\partial_t \sqrt{\rho(t,x_t)})^2\,\ed t \,\ed \bs{\mu}(\mr{z})  & \leq \frac{1}{2} \int_\Omega \int_0^T r_t^2   (\partial_t ({\rho(t,x_t)}))^2\,\ed t \,\ed \bs{\mu}(\mr{z}) \\ 
& \leq \frac{1}{2} | \rho-1 |^2_{C^{0,1}} \int_\Omega \int_0^T r_t^2 (1+ | \dot{x}_t |)^2 \,\ed t \,\ed \bs{\mu}(\mr{z}) \\ 
& \leq | \rho-1 |^2_{C^{0,1}}(T+\mc{A}(\bs{\mu}))\,.
\end{aligned}
\end{equation}
Reinserting these estimates into equation \eqref{eq:estA} we obtain \eqref{eq:boundalmost}.
\end{proof}

Consider now the following space
\begin{equation}
B_0 \coloneqq  \{ \rho \in  C^{0,1}([0,T]\times M)\,;\,   \rho(0,\cdot) = \rho(1,\cdot) = 0\}\,,
\end{equation}
which we regard as a Banach space with the $C^{0,1}$ norm. The following theorem shows that we can define the pressure as an element $P\in B_0^*$ and it is the analogue of Theorem 6.2 in \cite{ambrosio2009geodesics}.

\begin{theorem}\label{th:existpres}
Let $\bs{\mu}^*$ be a minimizer for the generalized $H(\mr{div})$ geodesic problem such that $\mc{A}(\bs{\mu}^*) <+\infty$. Then, there exists $P\in B_0^*$ such that
\begin{equation}\label{eq:existpres}
\langle P, \rho^{\bs{\nu}} -1 \rangle \leq \mc{A}(\bs{\nu}) - \mc{A}(\bs{\mu}^*)\,,
\end{equation}
for all generalized almost diffeomorphisms $\bs{\nu}$ satisfying the coupling constraint \eqref{eq:couplingconstr}.
\end{theorem}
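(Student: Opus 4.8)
The plan is to realize the pressure $p$ as a subgradient at the origin of a convex value function on $B_0$, exactly as in Brenier's and Ambrosio--Figalli's treatment of the Euler case. Define $J\colon B_0 \to [0,+\infty]$ by
\[
J(\rho) \coloneqq \inf\left\{ \mc{A}(\bs{\nu}) \,;\, \bs{\nu}\in\mc{P}(\Omega)\ \text{satisfies \eqref{eq:couplingconstr} and } \rho^{\bs{\nu}} = 1+\rho \right\},
\]
with the convention that the infimum over the empty set is $+\infty$. By Lemma \ref{lem:equivconstr} the minimizer $\bs{\mu}^*$ is admissible with $\rho^{\bs{\mu}^*}=1$ and minimizes the action among all such plans, so $J(0)=\mc{A}(\bs{\mu}^*)<+\infty$. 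Producing $p\in B_0^*$ satisfying \eqref{eq:existpres} is then equivalent to exhibiting a subgradient $p\in\partial J(0)$, i.e.\ $\langle p,\rho\rangle \le J(\rho)-J(0)$ for every $\rho\in B_0$. Indeed, given any almost diffeomorphism $\bs{\nu}$ satisfying the coupling constraint, the function $\rho\coloneqq\rho^{\bs{\nu}}-1$ lies in $B_0$: testing \eqref{eq:couplingconstr} with the $2$-homogeneous functions $g(\pi_x(z_0))\,\pi_r(z_0)^2$ and $g(\pi_x(z_T))\,\pi_r(z_T)^2$, $g\in C^0(M)$, forces $\mf{h}^2_0\bs{\nu}$ and $\mf{h}^2_T\bs{\nu}$ to coincide with the time-$0$ and time-$T$ homogeneous marginals of $\bs{\gamma}$, which equal $\rho_0$; hence $\rho^{\bs{\nu}}(0,\cdot)=\rho^{\bs{\nu}}(T,\cdot)=1$, and combined with $\rho^{\bs{\nu}}\in C^{0,1}$ (Definition \ref{def:almost}) this gives $\rho^{\bs{\nu}}-1\in B_0$. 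Since such a $\bs{\nu}$ is itself a competitor in the definition of $J(\rho)$, we get $J(\rho)\le\mc{A}(\bs{\nu})$, and the subgradient inequality then yields \eqref{eq:existpres}.

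Next I would establish the two structural properties of $J$. For convexity, observe that the action $\mc{A}(\bs{\nu})=\int_\Omega\mc{A}(\mr{z})\,\ed\bs{\nu}$ and the coupling constraint \eqref{eq:couplingconstr} are linear in $\bs{\nu}$, while $\bs{\nu}\mapsto\rho^{\bs{\nu}}$ is affine because $\mf{h}^2_t\bs{\nu}$ is linear in $\bs{\nu}$; therefore a convex combination of competitors for $J(\rho_0)$ and $J(\rho_1)$ is a competitor for $J\big(\tfrac12(\rho_0+\rho_1)\big)$, giving midpoint convexity and hence convexity. For properness, $\mc{A}\ge 0$ forces $J\ge 0$, so $J$ never equals $-\infty$ and is finite at $0$.

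The crucial quantitative input is local boundedness of $J$ near $0$, and this is exactly Proposition \ref{prop:actionboundgendiff}. For $\rho\in B_0$ with $\|\rho\|_{C^{0,1}}\le\tfrac12$ the density $1+\rho$ satisfies the hypotheses of that proposition (the endpoint normalization $\rho(0,\cdot)=\rho(T,\cdot)=0$ is built into $B_0$), so $\bs{\nu}\coloneqq\Phi^{1+\rho}_\#\bs{\mu}^*$ is a competitor for $J(\rho)$ with $\rho^{\bs{\nu}}=1+\rho$ and the coupling constraint preserved, whence
\[
J(\rho)\le\mc{A}(\bs{\nu})\le \mc{A}(\bs{\mu}^*)\big(1+\|\rho\|_{C^{0,1}}+\|\rho\|_{C^{0,1}}^2\big)+T\,\|\rho\|_{C^{0,1}}^2 .
\]
Thus $J$ is bounded above on the ball $\{\|\rho\|_{C^{0,1}}\le\tfrac12\}$. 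A proper convex function on a Banach space that is bounded above on a neighborhood of a point is locally Lipschitz, hence continuous, at that point; its epigraph then has nonempty interior, and a supporting hyperplane at the boundary point $(0,J(0))$, which by continuity is non-vertical, defines an element $p\in B_0^*$ with $p\in\partial J(0)$. Together with the first paragraph this gives \eqref{eq:existpres}.

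The main obstacle is this final convex-analytic step: because $B_0$ is infinite-dimensional, existence of the subgradient does not follow from convexity alone but genuinely relies on local boundedness above, which is the sole role of Proposition \ref{prop:actionboundgendiff}. Care is needed to ensure that the supporting hyperplane produced by Hahn--Banach is non-vertical, i.e.\ that $p$ is a bounded linear functional on $B_0$ rather than merely a closed hyperplane in $B_0\times\mathbb{R}$; this is precisely where continuity of $J$ at $0$ is used. The remaining verifications---the affine dependence of the constraints on $\bs{\nu}$ and the admissibility of $\Phi^{1+\rho}_\#\bs{\mu}^*$---are routine consequences of the earlier lemmas.
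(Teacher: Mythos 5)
Your proposal is correct and matches the paper's own proof in all essentials: both realize the pressure as a subgradient at $0$ of the convex value function $\rho \mapsto \inf\{\mc{A}(\bs{\nu}) \,;\, \rho^{\bs{\nu}} = 1+\rho \text{ and \eqref{eq:couplingconstr} holds}\}$, using Proposition \ref{prop:actionboundgendiff} applied to $\Phi^{1+\rho}_\#\bs{\mu}^*$ to get boundedness above near the origin and then standard convex analysis to obtain a nonempty subdifferential there. The only cosmetic differences are that the paper truncates its value function to $+\infty$ outside the ball $\{\|\rho\|_{C^{0,1}}\le \tfrac12\}$, and that your midpoint-convexity step is better replaced by the same competitor argument with arbitrary weights $\lambda\bs{\nu}_0+(1-\lambda)\bs{\nu}_1$, which yields full convexity directly without appealing to Bernstein--Doetsch-type results.
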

\begin{proof}
First of all, observe that for any generalized almost diffeomorphism $\bs{\nu}$ satisfying the coupling constraint,
\begin{equation}
\rho^{\bs{\nu}}(0,\cdot) = \rho^{\bs{\nu}}(1,\cdot) = 1\,;
\end{equation}
hence $\rho^{\bs{\nu}} -1 \in B_0$ and the pairing in equation \eqref{eq:existpres} is well defined. Now, consider the convex set $C\coloneqq \{ \tilde{\rho} \in B_0; \| \tilde{\rho}\|_{C^{0,1}} \leq \frac{1}{2} \}$ and the functional
$\phi: B_0 \rightarrow \mathbb{R}^+ \cup \{+\infty\}$ defined by 
\begin{equation}
\phi(\tilde{\rho})\coloneqq \left \{
\begin{array}{ll}
\inf\{ \mc{A}(\bs{\nu})\,;\, \rho^{\bs{\nu}} = \tilde{\rho}+1 \, \text{and \eqref{eq:couplingconstr} holds} \} & \text{if}\, \tilde{\rho} \in C\,, \\
+\infty & \text{otherwise}\,.
\end{array}
\right.
\end{equation}  
We observe that $\phi(0) = \mc{A}(\bs{\mu}^*) <+\infty$ and so $\phi$ is a proper convex function. We prove that it is bounded in a neighborhood of $\tilde{\rho} =0$. By proposition \ref{prop:actionboundgendiff}, for any $\tilde{\rho} \in C$ there exists a $\bs{\nu}\in \mc{P}(\Omega)$ satisfying $\rho^{\bs{\nu}} = \tilde{\rho}+1$ and the coupling constraint, such that
\corr{
\begin{equation}
\mc{A}(\bs{\nu}) \leq \mc{A}(\bs{\mu}^*) +  \| \tilde{\rho}\|_{C^{0,1}}\left(\frac{T}{2}+ \mc{A}(\bs{\mu}^*) \right)  + |\tilde{\rho} |^2_{C^{0,1}}(T+\mc{A}(\bs{\mu}^*))\,,
\end{equation}
which implies
\begin{equation}
\phi(\rho) \leq \phi(0) +  \| \tilde{\rho} \|_{C^{0,1}}\left(\frac{T}{2}+ \mc{A}(\bs{\mu}^*)  \right) + |\tilde{\rho} |^2_{C^{0,1}}(T+\mc{A}(\bs{\mu}^*))\,.
\end{equation}}
Therefore, $\phi$ is bounded in a neighborhood of $\tilde{\rho}=0$. As a consequence, by standard convex analysis arguments, $\phi$ is also locally Lipschitz on the same neighborhood and the subdifferential of $\phi$ at 0 is not empty, i.e.\ there exists $P\in B_0^*$ such that 
\begin{equation}
\langle P, \tilde{\rho} \rangle \leq \phi(\tilde{\rho}) - \phi(0)\,.
\end{equation}
By the definition of $\phi$, this implies
\begin{equation}
\langle P, \tilde{\rho} \rangle  \leq \mc{A}(\bs{\nu}) - \mc{A}(\bs{\mu}^*)\,,
\end{equation}
for all generalized almost diffeomorphisms $\bs{\nu}$ satisfying $\rho^{\bs{\nu}} = \tilde{\rho} +1$ and the coupling constraint in \eqref{eq:couplingconstr}.
\end{proof}

Theorem \ref{th:existpres} tells us that $\bs{\mu}^*$ is also a minimizer for the augmented action
\begin{equation}
\mc{A}^p(\bs{\nu}) \coloneqq \mc{A}(\bs{\nu}) - \langle P, \rho^{\bs{\nu}}-1 \rangle\,,
\end{equation}
defined on generalized almost diffeomorphisms. Then, for any $\tilde{\rho} \in B_0$, $\bs{\mu}^*_\epsilon\coloneqq \Phi^{1+\epsilon \tilde{\rho}}_\# \bs{\mu}^*$ is a generalized almost diffeomorphism if $\epsilon$ is sufficiently small. Moreover, we must have
\begin{equation}
\left.\frac{\ed}{\ed \epsilon} \mc{A}(\bs{\mu}^*_\epsilon) \right\vert_{\epsilon = 0} =0\,.
\end{equation} 
By the same calculation as in the proof of proposition \ref{prop:actionboundgendiff}, this implies
\begin{equation}
\langle P, \tilde{\rho} \rangle = \int_\Omega \int_0^T \tilde{\rho}(t,x_t) |\dot{z}_t|^2_{g_\cone} + \partial_t( \tilde{\rho}(t,x_t) ) r_t \dot{r}_t \, \ed t \,\ed \bs{\mu}^*(\mr{z})\,,
\end{equation}
for any $\tilde{\rho} \in B_0$, which defines $P$  uniquely as a distribution. This also implies that the functional $\phi$ is actually differentiable at 0 since its subdifferential reduces to a single element. 



\section{Correspondence with deterministic solutions}\label{sec:correspondence}
In this section we study the correspondence between generalized and classical solutions of the $H(\mr{div})$ geodesic equations. In particular, we show that for sufficiently short times classical solutions generate dynamic plans which are the unique minimizers of problem \ref{prob:generalizedunb}. 

We start by proving a modified version of a result presented in \cite{gallouet2017camassa} stating that smooth solutions of the $H(\mr{div})$ geodesic equations are length-minimizing for short times in an $L^\infty$ neighborhood on $\mr{Aut}_{\rho_0}(\cone)$. Let $(\varphi,\lambda)$ be a smooth solution of the system \eqref{eq:geodesic} on the interval $[0,T]$ . Let $P$ be the associated pressure and $\Psi_p(t,x,r) \coloneqq P(t,x) r^2$. Following \cite{brenier1989least} we introduce the following functional on $\Omega$,
\begin{equation}\label{eq:Bfunc}
{\mc B}(\mr{z}) \coloneqq  \left \{ 
\begin{array}{ll}
\int_0^T | \dot{z}_t |^2_{g_{\cone}} - \Psi_p(t,x_t,r_t) \,\ed t & \text{if } \mr{z}\in AC^2([0,T];\cone)\,,\\
+\infty & \text{otherwise}\,.
\end{array}\right.
\end{equation}
Moreover, we consider the function
$b:\cone^2\rightarrow \mathbb{R}$ defined by
\begin{equation}\label{eq:infh1}
 b (p,q) \coloneqq \inf \{ {\mc B}(\mr{z})\, ; \, z_0 = p\,,\, z_T = q   \} \,.
\end{equation}

\begin{lemma}\label{lem:gv} 
Suppose that $M\subset\mathbb{R}^d$ is convex and let $(\varphi,\lambda)$ be a smooth solution of \eqref{eq:geodesic} on $[0,T]\times M$, with $P$ being the associated pressure and $\Psi_p(t,x,r) \coloneqq P(t,x) r^2$.
For any fixed $x\in M$, let $\mr{z}^*=[\mr{x}^*,\mr{r}^*] \in \Omega$ be the curve defined by
$\mr{x}^*: t\rightarrow x^*_t \coloneqq \varphi_t(x)$ and $\mr{r}^*: t \rightarrow r^*_t \coloneqq \lambda_t(x)$. 
Let $r_{min} \coloneqq \min_{(t,x)\in [0,T]\times M } \lambda_t(x)$, $r_{max} \coloneqq \max_{(t,x)\in [0,T]\times M } \lambda_t(x)$ and $\varrho \coloneqq 2 r_{max}/r_{min}$. There exists a constant $C_0>0$ such that, if  
\begin{itemize}
\item for all $t\in[0,T]$ and for all $w \in T_{z_t^*} \cone$, 
\begin{equation}\label{eq:hessassumption}
| \mr{Hess}^{g_\cone}\, \Psi_p( w, w) | \leq \frac{C_0\pi^2}{T^2} |w|^2_{g_\cone} \,;
\end{equation}
\item for all $t_0,t_1\in[0,T]$, 
\begin{equation}\label{eq:distassumption}
d_\cone(z_{t_0}, z_{t_1}) \leq \frac{r_{min}}{4}\,;
\end{equation}
\item the following inequality holds:
\begin{equation}\label{eq:rhoassumption}
\left[\varrho^2 + \left(  \varrho + 1 \right)^2\right] \|P\|_{C^0} \leq \frac{3}{2T^2}\,;
\end{equation} 
\end{itemize}
then, $ {\mc B}(\mr{z}^*) = b (z^*_0,z^*_T)$; moreover,  for any other ${\mr{z}\in AC^2([0,T];\cone)}$ such that $z_0=z_0^*$ and $z_T=z_T^*$,  ${\mc B}(\mr{z}) = {\mc B}(\mr{z}^*)$ if and only if $\mr{z}=\mr{z}^*$. When $M$ is the circle of unit radius $S^1_1 \coloneqq \mathbb{R}/{2\pi}\mathbb{Z}$ the same holds with $C_0=2$ but without the conditions in equations \eqref{eq:distassumption} and \eqref{eq:rhoassumption}.
\end{lemma}

\begin{remark}\label{rem:presshess} The assumption in \eqref{eq:hessassumption} amounts to requiring that the spectral norm of the matrix 
\begin{equation}
g_\cone^{-1/2} (\mr{Hess}^{g_\cone}\, \Psi_p) {g_\cone^{-1/2}}= 
\left( 
\begin{array}{cc}
2P +  (\nabla)^2 P &  \nabla P \\
 (\nabla P)^T  & 2 P
\end{array}
\right)
\end{equation}
be bounded by $C_0 \pi^2/T^2$. This is verified for sufficiently small $T$ if, e.g., $P \in L^\infty([0,T];C^2(M))$. Similarly, the assumption in \eqref{eq:rhoassumption} is verified for sufficiently small $T$ if $P\in C^0([0,T]\times M)$, since for a given smooth solution $\varphi$ with $\varphi_0 = \mr{Id}$, $\varrho= 2 r_{max}/r_{min}\rightarrow 2$ as $T\rightarrow 0$. In addition, note that when $M=S^1_1$ the cone $\cone$ can be identified with $\mathbb{R}^2$ and we do not have to deal with the singularity introduced by the apex. This is the reason why the assumptions in \eqref{eq:distassumption} and \eqref{eq:rhoassumption} are not necessary in this case.
\end{remark}

The proof of lemma \ref{lem:gv} is postponed to the appendix. Lemma \ref{lem:gv} is the equivalent of lemma 5.2 in \cite{brenier1989least} on the cone. As in \cite{brenier1989least}, we can use it to prove the optimality of the plan concentrated on the continuous solution.
For this, however, we also need the following additional result on the function $b$, which characterizes the mininimizing paths starting and ending at the apex.

\begin{lemma}\label{lem:bis0}
Suppose $P\in C^0([0,T]\times M)$ and $P_{max}\coloneqq \max_{(t,x)\in [0,T]\times M} P(t,x) \leq (\pi/T)^2$. Then $b(o,o)=\mc{B}(\mr{o})=0$ where $\mr{o}:t\in[0,T] \rightarrow o \in \cone$. If the inequality is strict then  for any other ${\mr{z}\in AC^2([0,T];\cone)}$ such that $z_0=o$ and $z_T=o$,  ${\mc B}(\mr{z}) = {\mc B}(\mr{o})$ if and only if $\mr{z}=\mr{o}$.
\end{lemma} 
\begin{proof}
For the first part, observe that for any $\mr{z}\in AC^2([0,T];\cone)$ such that $r_0 = r_T =0$, using Poincar\'e inequality on $\mr{r}:t\in[0,T]\rightarrow r_t \in \mathbb{R}_{\geq 0}$
\corr{
\begin{equation}
\begin{aligned}
\mc{B}(\mr{z}) & \geq \int_0^T |\dot{z}_t|_{g_\cone}^2 -r_t^2 P_{max} \, \ed t\\
& \geq \int_0^T r_t^2 |\dot{x}_t|^2 + \frac{\pi^2}{T^2} r_t^2  -r_t^2  P_{max}   \, \ed t\\
& \geq  \left(\frac{\pi^2}{T^2} -  P_{max} \right) \int_0^T  r_t^2 \, \ed t\,.
\end{aligned}
\end{equation}}
This implies that $b(o,o) \geq 0$. Clearly, $b(o,o) \leq \mc{B}(\mr{o}) = 0$ and therefore $b(o,o) =0$. 
For the second part, if the inequality is strict, $C\coloneqq \frac{\pi^2}{T^2} - P_{max}  >0$. Then, for any other ${\mr{z}\in AC^2([0,T];\cone)}$ such that $z_0=o$ and $z_T=o$, and satisfying ${\mc B}(\mr{z}) = {\mc B}(\mr{o})$, we have
\corr{\begin{equation}
0 = {\mc B}(\mr{z}) \geq C \int_0^T  r_t^2 \, \ed t\,,
\end{equation}}
which implies $\mr{z} = \mr{o}$.
\end{proof}

\begin{theorem}\label{th:correspondence}
Under the assumptions of lemma \ref{lem:gv}, the dynamic plan $\bs{\mu}^* = (\varphi, \lambda)_\# \rho_0$ is an optimal solution of problem \ref{prob:generalizedunb}  with $\bs \gamma = [(\varphi_0,\lambda_0),(\varphi_T,\lambda_T)]_\# \rho_0$.  \corr{If the inequalities \eqref{eq:hessassumption} and \eqref{eq:rhoassumption} are strict, the solution $\bs{\mu}^*$ is  unique in the following sense: for any minimizer $\bs{\mu}$, the measure $\bs{\mu}^\mr{o} \coloneqq \bs{\mu} \mres \Omega^\mr{o}$, with $\Omega^\mr{o} \coloneqq \Omega \setminus \{\mr{o}\}$, is equal to $\bs{\mu}^*$ up to rescaling (defined in lemma \ref{lem:rescaling}). }
\end{theorem}
 
\begin{proof}
Let $\bs \mu$ be any dynamic plan with finite action, i.e.\ $\mc A(\bs \mu) <+\infty$, and satisfying the constraints in \eqref{eq:couplingconstr} and \eqref{eq:constraintsunb}. Consider the functional
\begin{equation}
\mc P(\mr{z}) = \int_0^T \Psi_p(t,x_t,r_t) \ed t\,.
\end{equation}
Then,
\begin{equation}
\begin{aligned}
\int_{\Omega} \mc P(\mr{z})\, \ed \bs \mu(\mr{z}) & = \int_{\Omega} \int_0^T \Psi_p(t, x_t ,r_t)  \,\ed t \, \ed \bs \mu(\mr{z})\\
& = \int_{\Omega} \int_0^T   P(t,x_t) r_t^2 \, \ed t \, \ed \bs \mu (\mr{z}) \\
& = \int_0^T \int_{M}  P \, \ed \rho_0  \ed t\,.\\
\end{aligned}
\end{equation}
Hence,
\begin{equation}\label{eq:bstar}
\mc{B}(\bs \mu) \coloneqq \int_\Omega \mc{B}(\mr{z}) \, \ed \bs{\mu}(\mr{z}) = \mc A(\bs \mu) -  \int_0^T \int_{M}  P \, \ed \rho_0  \ed t\,,
\end{equation}
and since equation \eqref{eq:bstar} also holds replacing $\bs{\mu}$ with  $\bs{\mu}^*$,
\begin{equation}\label{eq:AB}
\mc{B}(\bs \mu) - \mc{B}(\bs \mu^*) = \mc A(\bs \mu) - \mc A(\bs \mu^*) \,.
\end{equation}

Now, by proposition \ref{prop:rescaling} we have the decomposition $\bs{\mu} = \tilde{\bs{\mu}} + \tilde{\bs{\mu}}^0$ where $\tilde{\bs{\mu}} = \bs{\mu}\mres \{ \mr{z} \in \Omega\,;\, r_0\neq 0 \}$ and $\tilde{\bs{\mu}}^0 = \bs{\mu}\mres \{ \mr{z} \in \Omega\,;\, r_0 =r_T= 0 \}$. Therefore, integrating the function $b$ defined in \eqref{eq:infh1} with respect to $\bs \mu$ we obtain 
\begin{equation}\label{eq:b00}
\begin{aligned}
\int_{\Omega} b(z_0,z_T) \,  \ed \bs \mu(\mr{z})  
& = \int_{\Omega}  b(z_0,z_T) \,\ed \tilde{\bs{\mu}}(\mr{z})+  \int_{\Omega}  b(o,o) \,\ed \tilde{\bs{\mu}}^0(\mr{z}) \\
& = \int_{\Omega}  b(z_0,z_T) \,\ed \tilde{\bs{\mu}}(\mr{z})\,,
\end{aligned}
\end{equation}
where we used the fact that $b(o,o)=0$  by lemma \ref{lem:bis0}. By proposition \ref{prop:rescaling},  $\tilde{\bs{\mu}}^1 \coloneqq \mr{dil}_{r_0,2} \tilde{\bs{\mu}}$
satisfies the strong coupling constraint $(e_0,e_T)_\# \tilde{\bs{\mu}}^1 = \bs{\gamma}$. Moreover, $b$ is 2-homogeneous (because $\mc{B}$ is) and therefore 
\begin{equation}\label{eq:b}
\begin{aligned}
\int_{\Omega} b(z_0,z_T) \,  \ed \tilde{\bs \mu}(\mr{z})  = \int_{\Omega}  b(z_0,z_T) \,\ed \tilde{\bs{\mu}}^1 (\mr{z})  
 = \int_{\cone^2}  b(p,q) \,\ed \bs \gamma(p,q)\,. 
\end{aligned}
\end{equation}
We get the same result integrating $b$ with respect to $\bs \mu^*$. In particular, by lemma \ref{lem:gv},
\begin{equation}
\int_{\Omega} b(z_0,z_T) \,  \ed \bs \mu(\mr{z}) = \mc{B}(\bs \mu^*) \,.
\end{equation}
By definition of $b$ in \eqref{eq:infh1}, for any path $\mr{z}\in{\Omega}$,  $\mc{B}(\mr{z}) \geq b(z_0,z_T)$ and therefore
\begin{equation}
\mc{B}(\bs \mu) \geq \int_{\Omega} b(z_0,z_T) \,  \ed \bs \mu(\mr{z}) = \mc{B}(\bs \mu^*) \,,
\end{equation}
which implies the same inequality for $\mc A$ due to equation \eqref{eq:AB}. This proves that  $\bs{\mu}^*$ is an optimal solution.

In order to prove uniqueness, let $\bs \mu$ be a solution of problem \ref{prob:generalizedunb}. \corr{Without loss of generality we can assume that $\bs{\mu} = \bs{\mu}^{\mr{o}}$. Then, equations \eqref{eq:AB} and \eqref{eq:b} imply
\begin{equation} 
\int_{\Omega} \mc{B}(\mr{z}) - b(z_0,z_T) \,  \ed \bs \mu(\mr{z}) = \mc{B}(\bs \mu) - \mc{B}(\bs \mu^*) = \mc A(\bs \mu) - \mc A(\bs \mu^*) = 0 \,.
\end{equation}
Since for any $\mr{z}\in \Omega$ we have $\mc{B}(\mr{z}) \geq b(z_0,z_T)$, then for $\bs \mu$-almost every path $\mr{z}$, $\mc{B}(\mr{z}) = b(z_0,z_T)$. Clearly, also for $\bs \mu^*$-almost every path $\mr{z}$, $\mc{B}(\mr{z}) = b(z_0,z_T)$. 
Now, if $\bs{\mu}$ satisfies the strong coupling constraint, for $\bs \mu$-almost every path $\mr{z}$ and for $\bs \mu^*$-almost every path $\mr{z}^*$ such that $z_0 = z_0^*$ and $z_T = z_T^*$, we have $\mc{B}(\mr{z}) = \mc{B}(\mr{z}^*)=b(z^*_0,z^*_T)$. This implies $\mr{z}=\mr{z}^*$ by lemma \ref{lem:gv}. In other words, $\bs{\mu}$ and $\bs{\mu}^*$ are concentrated on the same paths and due to the strong coupling constraint they must coincide. } 

%


%
\corr{
On the other hand, suppose that $\bs{\mu}$ does not satisfy the strong coupling constraint. Recall that for $\bs{\mu}$-almost every path $\mr{z}$ we have $\mc{B}(\mr{z}) = b(z_0,z_T)$. Then, defining $\tilde{\Omega} \coloneqq  \{\mr{z}\in\Omega\,;\, r_0 =r_T = 0 \}$, we have
\begin{equation}
\int_{\tilde{\Omega}}  \mc{B}(\mr{z}) \, \ed\bs{\mu}(\mr{z}) = \int_{\tilde{\Omega}}  b(z_0,z_T) \, \ed\bs{\mu}(\mr{z}) =  b(o,o)\,\bs{\mu}({\tilde{\Omega}})   = 0\,.
\end{equation}
For any $\mr{z} \in \tilde{\Omega}$ we also have $\mc{B}(\mr{z})\geq b(o,o)=0$. Hence, we find that for $\bs{\mu}$-almost every path $\mr{z}$ such that $z_0 =z_T =o$, we have $\mc{B}(\mr{z})= 0$ which by lemma \ref{lem:bis0} is equivalent to $\mr{z} = \mr{o}$. This implies  
\begin{equation}\label{eq:conditionrescaling} 
\bs{\mu}(\{ \mr{z}\in\Omega \,;\, r_0=r_T=0\}) =0\,.  
\end{equation}
Then, corollary \ref{cor:existencestrong} implies that $\bs{\mu}$ can be rescaled to a measure satisfying the strong coupling.}
\end{proof}

The assumptions on the pressure in lemma \ref{lem:gv} are less strict for the case of the circle.
This leads to the following result. 

\begin{corollary}\label{cor:correspondenceS1}
Let $M=S^1_1$ and let $(\varphi,\lambda)$ be a smooth solution of \eqref{eq:geodesic} on $[0,T]\times M$, with $P$ being the associated pressure and $\Psi_p(t,x,r) \coloneqq P(t,x) r^2$.  If for all $t\in[0,T]$ and for all $w \in T_{z_t^*} \cone$, 
\begin{equation}\label{eq:hessassumptionS1}
| \mr{Hess}^{g_\cone}\, \Psi_p( w, w) | \leq \frac{2\pi^2}{T^2} |w|^2_{g_\cone} \,,
\end{equation}
then the dynamic plan $\bs{\mu}^* = (\varphi, \lambda)_\# \rho_0$ is an optimal solution of problem \ref{prob:generalizedunb} for the coupling $\bs \gamma = [(\varphi_0,\lambda_0),(\varphi_T,\lambda_T)]_\# \rho_0$. If the inequality in equation \eqref{eq:hessassumptionS1} is strict,
it is unique  up to rescaling (in the sense of lemma \ref{lem:rescaling}).
\end{corollary}


\section{Some examples of generalized $H(\mr{div})$ geodesics}\label{sec:examples}

In this section we construct some instructive examples of generalized $H(\mr{div})$ geodesics which shed some light on the need of the relaxation and its tightness. In particular, we will focus on deterministic boundary conditions and construct singular solutions, i.e. minimizers that charge (non-trivial) paths starting and ending at the apex of the cone. 
\corr{This will allow us to prove two main results. First, that our relaxation is not tight on $S^1_R$, the circle of radius $R$, when $R$ is sufficiently large; and second, that on the torus, for specific boundary conditions, problem \ref{prob:det} may admit no smooth minimizer, whereas we can construct a singular solution as the limit of a minimizing sequence of smooth flows. This suggests that problem \ref{prob:generalizedunb} is a tight relaxation of problem \ref{prob:det} in dimension $d\geq 2$. } 

We start by considering an important generalized flow which provides an upper bound on the action on any domain and for any deterministic coupling. 

\begin{lemma}\label{lem:diameter}
Consider the generalized $H(\mr{div})$ geodesic problem with coupling given by $\bs{\gamma}=(\fmap,\sqrt{|\mr{Jac}(h)|})_\# \rho_0$ where $\fmap\in\mr{Diff}(M)$. Denote by $\rho_0$ the Lebesgue measure on $M$, \corr{normalized so that $\rho_0(M)=1$}. Then the measure
\begin{equation}\label{eq:nondetrot}
\bs{\mu}^* = \frac{1}{2}(\mr{Id},\zeta^0)_\# \rho_0 + \frac{1}{2}(\psi^1,\zeta^1)_\# \rho_0\,,
\end{equation}
with
\begin{gather}
\zeta^0_t(x) = \sqrt{2}\sin(\sqrt{P^*}t)\,, \quad 
\zeta^1_t (x) = \left\{
\begin{array}{ll}
\sqrt{2} \cos(\sqrt{P^*} t)  & t\leq T/2\,,\\
- \sqrt{2\, |\mr{Jac}(h(x))|} \cos(\sqrt{P^*} t)  & t> T/2\,,
\end{array} 
\right.\\
\psi^1_t(x) = \left\{
\begin{array}{ll}
x & t\leq T/2\,,\\
\fmap(x) & t> T/2\,,
\end{array}
\right.
\end{gather}
where $P^*= \pi^2/T^2$, is an admissible generalized flow and the action of the minimizer is bounded from above by $\mc{A}(\bs{\mu}^*)=\pi^2/T$.
\end{lemma}
\begin{proof}
We need to check that $\bs{\mu}^*$ is a probability measure and that it satisfies the homogeneous marginal and coupling constraints. The fact that $\bs{\mu}^*(\Omega)=1$ is immediate from the definition. As for the marginal constraint, observe that for any $ f \in C^0([0,T] \times M)$,
\begin{equation}
\begin{aligned}
\int_\Omega \int_0^T f(t,x_t) r_t^2 \, \ed t \, \ed{\bs \mu}(\mr{z}) & = \frac{1}{2} \int_M \int_0^T  f(t,x) 2 \sin^2(\sqrt{P^*} t) \,\ed t \, \ed\rho_0(x)\\&\quad + \frac{1}{2} \int_M \int_0^{T/2}  f(t,x) 2 \cos^2(\sqrt{P^*} t) \,\ed t \, \ed\rho_0(x)\\&\quad + \frac{1}{2} \int_M \int_{T/2}^{T}  f(t,\fmap(x)) 2 |\mr{Jac}(\fmap(x))| \cos^2(\sqrt{P^*} t) \,\ed t \, \ed\rho_0(x)  \\
& = \int_M \int_0^T  f(t,x) \,\ed t \, \ed\rho_0(x) \,. 
\end{aligned}
\end{equation}
By similar calculations also the homogeneous coupling constraint holds and therefore $\bs{\mu}^*$ is admissible. Moreover, the action associated with $\bs{\mu}^*$ is given by
\begin{equation}
\begin{aligned}
\mc{A}(\bs{\mu}^*) &=  \frac{1}{2}  \int_{M} \int_0^T |\dot{\zeta}^0_t(x)|^2+ |\dot{\zeta}^1_t(x)|^2 \ed  t \,\ed \rho_0(x) \\ 
 &=  \int_{M} \int_0^T P^* \ed  t \,\ed \rho_0(x) =\frac{\pi^2}{T}\,. 
\end{aligned}
\end{equation}
\end{proof}

The dynamic plan in lemma \ref{lem:diameter} shows that in our generalized formulation we can reach any final configuration only by changes in the Jacobian, although in a non-deterministic sense. In the following we will consider several instances of this flow for different domains and couplings and we will prove that in some cases it also minimizes the generalized $H(\mr{div})$ action.
In fact, the idea behind the construction of the generalized flow $\bs{\mu}^*$ is that as for geodesics on the cone, we expect that for a sufficiently large displacement optimal solutions would concentrate on straight lines in the radial direction passing by the apex of the cone. If there is no motion on the base space $M$, the geodesic equation \eqref{eq:geodesic} in the radial direction reduces to
\begin{equation}
\ddot{\lambda} + \lambda P = 0
\end{equation}
The dynamic plan $\bs{\mu}^*$ concentrates precisely on solutions to this equation with constant pressure $P=P^*$.

It should also be noted that $\bs{\mu}^*$ is exactly in the form discussed in proposition \ref{prop:rescaling}, i.e.\ it is decomposed in the sum of two measures, $\bs{\mu}^* = \tilde{\bs{\mu}} + \tilde{\bs{\mu}}^0$, where
\begin{equation}
\tilde{\bs{\mu}}^0 = \frac{1}{2}(\mr{Id},\zeta^0)_\# \rho_0\,, \quad   \tilde{\bs{\mu}} =  \frac{1}{2}(\psi^1,\zeta^1)_\# \rho_0\,.
\end{equation}
In particular, $\tilde{\bs{\mu}}$ does not charge paths starting and ending at the apex, so it can be rescaled to a probability measure satisfying the strong coupling constraint but not the homogeneous marginal constraint. This is given by
\begin{equation}
\tilde{\bs{\mu}}^1 = \mr{dil}_{r_0,2} \tilde{\bs{\mu}} = (\psi^1,\zeta^1/\sqrt{2})_\# \rho_0\,.
\end{equation}
The dynamic plan $\tilde{\bs{\mu}}^1$ describes a peculiar solution in which particles gradually disappear up to time $T/2$, when the whole domain vanishes, and then gradually reappear in the final configuration. 
\corr{This phenomenon is related to the collision of a peakon and an anti-peakon in one dimension, which is a well-known solution of the CH equation \cite{camassa1993integrable}}. Such a solution implies that arbitrarily small portions of the domain can be stretched to occupy finite area at finite bounded cost. The generalized solution in \eqref{eq:nondetrot} replicates this behavior in an averaged sense across the domain.  \corr{This will be made precise by the approximation results in proposition \ref{prop:convergences1} and theorem \ref{th:approxt2}.}

\subsection{Construction of a generalized solution on the circle}\label{sec:gencircle}
We now consider the generalized $H(\mr{div})$ geodesic problem on $S^1_R$, the circle of radius $R$. For specific boundary conditions given by uniform rotation and when $R=1$, we show that the generalized flow in lemma \ref{lem:diameter} is a minimizer although not unique, having the same cost as the deterministic solution.
When $R>1$, the constant speed rotation is not a minimizer since its action is strictly larger than $\pi^2/T$. \corr{ Moreover, if $R$ is sufficiently large, there is no minimizing sequence of deterministic smooth flows whose action tend to $\pi^2/T$. This implies that in this case, the relaxed problem \ref{prob:generalizedunb} is not tight. 
In order to make this precise, we start by proving the following lower bound on the action of problem \ref{prob:det} on $S^1_R$ and for boundary conditions given by uniform rotation.
\begin{lemma}\label{lem:lowbound1d}
Let $\varphi^*:[0,T]\times S^1_R \rightarrow S^1_R$ be a smooth flow satisfying $\varphi^*_0 =\mr{Id}$ and $\varphi^*_T=\fmap$, where $\fmap$ prescribes uniform rotation by half of the circle length, i.e.\ $\fmap: x \in \mathbb{R}/{2\pi R}\mathbb{Z} \rightarrow x+\pi R \in \mathbb{R}/{2\pi}\mathbb{Z}$. Then,
\begin{equation}
\frac{1}{2\pi R} \int_{0}^{2\pi R} \mc{A}([\varphi^*(x),\sqrt{\mr{Jac}(\varphi^*(x))}])\, \ed x \geq \frac{\tanh (2 \pi R)}{2 T} \pi R
\,.
\end{equation} 
\end{lemma} 
\begin{proof}
Consider the following problem
\begin{equation}\label{eq:peakonac}
\inf \left\{  \frac{1}{2\pi R} \int_{0}^{2\pi R} \mc{A}([\varphi(x),\sqrt{\mr{Jac}(\varphi(x))}])\, \ed x \, ; \, \varphi_0(0) = 0\,, \varphi_T(0) = \pi R \right\}\, .
\end{equation}
where the infimum is taken over smooth curves $t \in [0,T] \mapsto \varphi_t \in  \mr{Diff}(S^1_R)$. The quantity in equation \eqref{eq:peakonac} provides a lower bound for the action associated with $\varphi^*$. Fix a smooth flow $\varphi$. For any $t\in(0,1)$, let $u_t \in H^1(S^1_R)$ be the velocity field minimizing
\begin{equation}
\frac{1}{2\pi R} \int_{0}^{2\pi R} |u|^2 + \frac{1}{4} |\partial_x u|^2 \ed x \, ,
\end{equation}
over all $u\in H^1(S^1_R)$ such that $u(\varphi_t(0)) = \partial_t\varphi_t(0)$. In particular, we have $u_t = G*m_t$ where $m$ is in the form
\begin{equation}
m_t(x) = p_t \, \delta(x -{\varphi_t(0)})\,,
\end{equation}
with $p_t \in \mathbb{R}$ depends on the boundary conditions, and $G$ is the Green's function for the operator $\mr{Id} - \frac{1}{4} \partial_{xx}$, which is given by
\begin{equation}\label{eq:green}
G(x,y) = \frac{\cosh(2|x-y|-2\pi R)}{\sinh(2\pi R)}
\end{equation}
(note that $u_t$ has the same form of a peakon on $S^1_R$, see section \ref{sec:collpeak}). 
Then, by direct calculation,
\begin{equation}
\begin{aligned}
\frac{1}{2\pi R} \int_{0}^{2\pi R} \mc{A}([\varphi(x),\sqrt{\mr{Jac}(\varphi(x))}])\, \ed x 
&\geq \int_0^T \int_{0}^{2\pi R} |u_t|^2 + \frac{1}{4} |\partial_x u_t|^2 \ed x \,\ed t \\
& =    \frac{\tanh (2 \pi R)}{2 \pi R} \int_0^T |\partial_t\varphi_t(0)|^2 \ed t\,.
\end{aligned}
\end{equation}
Using the boundary conditions on $\varphi$ from equation \eqref{eq:peakonac} gives the result.
\end{proof}
In view of lemma \ref{lem:diameter}, lemma \ref{lem:lowbound1d} implies that our relaxation (problem \ref{prob:generalizedunb}) is not tight on $S^1_R$ for sufficiently large $R$. This is made precise in the following theorem.  
}

\begin{theorem}\label{th:rots1}
Consider the generalized $H(\mr{div})$ geodesic problem on $S^1_R$ with coupling constraint given by uniform rotation by half of the circle length, i.e.\ in polar coordinates $\fmap:\theta \in \mathbb{R}/{2\pi}\mathbb{Z} \rightarrow \theta+\pi \in \mathbb{R}/{2\pi}\mathbb{Z} $. Denote by $\rho_0 = (2\pi)^{-1}\ed \theta$ the normalized Lebesgue measure on the circle. The following holds:
\begin{enumerate}
\item when $R=1$ the dynamic plan $\bs{\mu}^*$ in lemma \ref{lem:diameter}, i.e.\ equation \eqref{eq:nondetrot} with
\begin{equation}
\zeta^0_t(\theta) = \sqrt{2}\sin(\sqrt{P^*}t)\,, \quad \zeta^1_t (\theta) = \sqrt{2}| \cos(\sqrt{P^*} t) | \,, \quad 
\psi^1_t(\theta) = \left\{
\begin{array}{ll}
\theta & t\leq T/2\,,\\
\theta+\pi & t> T/2\,,
\end{array}
\right.
\end{equation}
as well as the dynamic plan induced by constant speed rotation are minimizers corresponding to the constant pressure $P^* = (\pi/T)^2$; 
\item when $R>1$ the constant speed rotation is not a minimizer; \corr{moreover, if $R$ is sufficiently large, the infimum of the deterministic $H(\mr{div})$ geodesic problem \ref{prob:det} is strictly larger than that of the generalized geodesic problem \ref{prob:generalizedunb}. }
\end{enumerate} 
\end{theorem}   
\begin{proof}
For the first point, observe that from the Euler-Lagrange equations \eqref{eq:geodesiceulerian} the pressure relative to constant speed rotation on $S^1_1$ is given by
\begin{equation}
P^{rot} = |u|^2 = \frac{\pi^2}{T^2}.
\end{equation}
This satisfies the hypotheses of corollary \ref{cor:correspondenceS1} (see remark \ref{rem:presshess}) and therefore the constant rotation is a minimizer. Since the Jacobian stays constant during the rotation, the associated action is given by 
\begin{equation}
\mc{A}^{rot} = \frac{1}{2\pi} \int_0^{2\pi} \int_0^T |u|^2\, \ed t \,\ed \theta = \frac{\pi^2}{T}\,.
\end{equation}
On the other hand, by lemma \ref{lem:diameter}, $\bs{\mu}^*\in \mc{P}(\Omega)$ is admissible and its action is equal to $\mc{A}(\bs{\mu}^*) = \pi^2/T$, independently of $R$.
Hence $\bs{\mu}^*$ is also a minimizer and it must share the same pressure with the constant speed rotation, $P^{rot} = P^*$.
For the second point, observe that the action for constant speed rotation on $S^1_R$ is $\mc{A}^{rot}_R = R^2 \mc{A}^{rot}>\mc{A}(\bs{\mu}^*)$ whenever $R>1$. \corr{Similarly, for $R$ sufficiently large we have
\begin{equation}
\frac{\tanh (2 \pi R)}{2 T} \pi R > \mc{A}(\bs{\mu}^*)\,.
\end{equation}
We conclude applying lemma \ref{lem:lowbound1d}.}
\end{proof}

%
\corr{
\begin{remark}
For $d=1$ one can produce a tight relaxation of problem \ref{prob:det} using different techniques than those used in the present paper. This approach is developed in \cite{dimarino:hal-02161686} and is specific to dimension one. However note that one still needs to rely on theorem \ref{th:correspondence} in order to conclude that smooth geodesics are the unique global length-minimizers for this tight one-dimensional relaxation.
\end{remark}
}

\subsection{Collision of peakons and an approximation result}\label{sec:collpeak}

Before going further with the construction of a generalized solutions on a two-dimensional domain, we need to clarify the connection between the solution presented in theorem \ref{th:rots1} and diffeomorphisms of the circle. In particular, here we show that if no rotation occurs, the generalized flow in theorem \ref{th:rots1} can be approximated using linear peakon/anti-peakon collisions. This will serve as a basis to construct a sequence of deterministic flows converging to a non-deterministic minimizer in two dimensions.

Consider the CH equation on the circle $S^1_1$ with Lagrangian $\int_{0}^{2\pi} |u|^2 + \frac{1}{4}|\partial_\theta u|^2\, \ed \theta$, where $u:[0,T] \times S^1_1 \rightarrow \mathbb{R}$ is the Eulerian velocity field. Peakon solutions can be described in terms of momentum $m = u - \frac{1}{4} \partial_{\theta}^2 u$ as a linear combination of Dirac delta functions, i.e.\
\begin{equation}\label{eq:peakonmomentum}
m(t,\theta) = \sum_{i=1}^N p_i(t) \delta(\theta - {\theta_i(t)})\,,
\end{equation}
where $p_i:[0,T] \rightarrow \mathbb{R}$ and $\theta_i: [0,T] \rightarrow S^1_1$ are appropriate functions specifying the momentum carried by the $i$th peakon and its location, respectively.  The associated velocity field is given by $u = G * m$ where $G$ is the Green's function for the operator $\mr{Id} - \frac{1}{4}\partial_{\theta}^2$ (see equation \eqref{eq:green}). 

The collision of a peakon and an anti-peakon corresponds to the case $N=2$, $p_2 =-p_1$, $\theta_2 = 2\pi-\theta_1$, in which case there exists a finite time $T^*$ such that as $t\rightarrow T^*$ collision occurs, i.e.\ $\theta_1 =\theta_2$. A similar behavior occurs for the Lagrangian $\int_{0}^{2\pi} \frac{1}{4} |\partial_\theta u|^2\, \ed \theta$, which corresponds to the Hunter-Saxton equation. In this case, the velocity field is simply given by the linear interpolation of the velocity at $\theta_1$ and $\theta_2$ (see figure \ref{fig:shock}) and the Jacobian of the flow map is piecewise constant. Hence specifying the trajectory $\theta_1(t)$ uniquely defines the flow. We refer to such a solution as \emph{linear peakon/anti-peakon} collision. The associated flow on a circle of arbitrary radius $R$ is described in the following lemma.

\begin{lemma}\label{lem:collision}
For a given $\epsilon > 0$, let $\varphi^\epsilon:[0,T] \times S^1_R \rightarrow S^1_R$ be the flow map defined in polar coordinates by
\begin{equation}\label{eq:shock}
\varphi^\epsilon_t(0) = 0\,, \quad \
\partial_\theta \varphi^\epsilon_t(\theta) = 
\left\{
\begin{array}{ll}
1- \sin \left( \frac{\pi t}{2 (T+\epsilon)} \right)   &\text{if}~ \frac{\pi}{2}<\theta<\frac{3\pi}{2}\,, \\
1+ \sin \left( \frac{\pi t}{2 (T+\epsilon)} \right)   &\text{otherwise}\,,
\end{array}
\right. 
\end{equation}
Then the associated action is uniformly bounded and 
\begin{equation}\label{eq:actionlimit}
\lim_{R\to 0} \lim_{\epsilon \to 0}  \frac{1}{2\pi} \int_{0}^{2\pi} \mc{A}([R\varphi^\epsilon(\theta),\lambda^\epsilon(\theta)]) \, \ed \theta = \frac{\pi^2}{16T}\,,
\end{equation}
where $\lambda^\epsilon = \sqrt{\mr{Jac}(\varphi^\epsilon)}$ and  
\begin{equation}\label{eq:polaraction}
 \frac{1}{2\pi} \int_{0}^{2\pi} \mc{A}([R\varphi^\epsilon(\theta),\lambda^\epsilon(\theta)])\, \ed \theta= \frac{1}{2\pi} \int_{0}^{2\pi} \int_0^T R^2 (\lambda^\epsilon_t(\theta))^2|\dot{\varphi}^\epsilon_t(\theta)|^2+ |\dot{\lambda}_t^\epsilon(\theta)|^2\,\ed t  \, \ed \theta
\end{equation}
is the action expressed in polar coordinates.
\end{lemma}

\begin{proof} The result follows by direct computation and by definition of the functional $\mc{A}$ in equation \eqref{eq:energyfunctional}. Note that the expression for the action in equation \eqref{eq:polaraction} can be justified by an appropriate change of variables. Specifically, denoting by $\varphi^\epsilon_{R}$ the flow map in arc length coordinates $x\in \mathbb{R}/{2\pi R}\mathbb{Z}$, we have $\theta = x/R$ and
\begin{equation}
\varphi^\epsilon_R(x) = R \varphi^\epsilon\left( \frac{x}{R}\right)\,, \quad \partial_x \varphi^\epsilon_R(x) = \partial_\theta \varphi^\epsilon\left( \frac{x}{R}\right)\,.
\end{equation}
Denoting $\lambda^\epsilon_R = \sqrt{\partial_x \varphi^\epsilon_{R}}$, since $\rho_0 = (2\pi R)^{-1} \ed x$  we obtain that the action is given by
\begin{equation}
\begin{aligned}
\int_{S^1_R} \mc{A}([\varphi^\epsilon_R(x),\lambda^\epsilon_R(x)])  \, \ed \rho_0(x) & = \frac{1}{2\pi R} \int_{0}^{2\pi R} \int_0^T  ((\lambda^\epsilon_R)_t(x))^2|(\dot{\varphi}^\epsilon_R)_t(x)|^2+ |(\dot{\lambda}_R^\epsilon)_t(x)|^2\,\,\ed t  \, \ed {x} \\
&=\frac{1}{2\pi} \int_{0}^{2\pi} \int_0^T R^2 (\lambda^\epsilon_t(\theta))^2|\dot{\varphi}^\epsilon_t(\theta)|^2+ |\dot{\lambda}_t^\epsilon(\theta)|^2\,\ed t  \, \ed \theta.
\end{aligned}
\end{equation}
\end{proof}
\begin{remark} The flow described in lemma \ref{lem:collision} coincides with a linear peakon/anti-peakon solution of the Hunter-Saxton equation where the momentum is in the form of equation \eqref{eq:peakonmomentum} and the two peak trajectories are given by
\begin{equation}
\theta_1^\epsilon(t) = \frac{\pi}{2} \left(1+ \sin \left( \frac{\pi t}{2 (T+\epsilon)} \right) \right)\,,\quad \theta_2^\epsilon(t) = \frac{\pi}{2} \left(3- \sin \left( \frac{\pi t}{2 (T+\epsilon)} \right) \right)\,.
\end{equation}
\corr{The reason why we consider solutions  Hunter-Saxton rather than CH peakons is due to the fact that as $R\rightarrow 0$
 the action in \eqref{eq:polaraction} tends to the $\dot{H}^1$ action. }
\end{remark}

In figure \ref{fig:shockflow}, we give an illustration of the flow defined in equation \eqref{eq:shock} for fixed $\epsilon$ both in terms of particle trajectories and as a measure on the cone for $R=1$ (in which case the cone can be identified with $\mathbb{R}^2$). Note that at collision time the trajectories of particles between the peaks reach the apex of the cone.

\corr{In the next lemma we construct a flow using $n$ linear peakon/anti-peakon collisions that converges as $n\rightarrow+\infty$ to a measure in the same form as the one in lemma \ref{lem:diameter}. 
}

\begin{lemma}\label{lem:mcollision}
Let $\varphi^\epsilon:[0,T] \times S^1_R \rightarrow S^1_R$ the flow in lemma \ref{lem:collision} and  for each $n\in \mathbb{N}$ let $\hat{\varphi}^n:[0,T] \times S^1_R \rightarrow S^1_R$ be defined by
\begin{equation}\label{eq:hatflowdef}
\hat{\varphi}^n(\theta) \coloneqq \frac{2\pi}{n} \left\lfloor \frac{\theta n}{2\pi }\right\rfloor+ \frac{1}{n}\,\varphi^{\epsilon_n}\left( n\theta -2\pi \left\lfloor \frac{\theta n}{2\pi }\right\rfloor \right) 
\end{equation}
with $\epsilon_n$ being any positive sequence such that $\epsilon_n\rightarrow 0$. Then $\hat{\bs{\mu}}_n \coloneqq (\hat{\varphi}^n, \sqrt{\mr{Jac}(\hat{\varphi}^n)})_\# \rho_0 \rightharpoonup \hat{\bs{\mu}}^*$, 
where $\hat{\bs{\mu}}^*$ is defined by
\begin{equation}\label{eq:nondetnorot}
\hat{\bs{\mu}}^* = \frac{1}{2}(\mr{Id},\zeta^0)_\# \rho_0 + \frac{1}{2}(\mr{Id},\zeta^1)_\# \rho_0\,,
\end{equation}
with
\begin{equation}
\zeta^0_t(\theta) = \sqrt{2}\sin\left(\frac{\pi t}{4 T}+\frac{\pi}{4} \right)\,, \quad \zeta^1_t (\theta) = \sqrt{2} \cos\left(\frac{\pi t}{4 T}+\frac{\pi}{4} \right)  \,.
\end{equation}
Moreover, $\mc{A}(\hat{\bs{\mu}}_n) \rightarrow \mc{A}(\hat{\bs{\mu}}^*) = {\pi^2}/(16T)$.
\end{lemma}

\begin{proof}
For simplicity, we prove the result for $R=1$ but the argument presented here applies for any $R>0$.
Let $\mc{F}$ be any bounded Lipschitz functional on $\Omega$ with Lipschitz constant $L$. We need to check that 
\begin{equation}
\lim_{n\to+\infty} \int_\Omega \mc{F}(\mr{z}) \, \ed \hat{\bs{\mu}}_n(\mr{z}) = \int_\Omega \mc{F}(\mr{z}) \, \ed \hat{\bs{\mu}}^*(\mr{z}) \,.
\end{equation}
Denoting by $\hat{\lambda}^n= \sqrt{\mr{Jac}(\hat{\varphi}^n)}$ and by $\hat{\lambda}^{\epsilon_n}= \sqrt{\mr{Jac}(\hat{\varphi}^{\epsilon_n})}$, we observe that 
\begin{equation}\label{eq:decomposition1}
\begin{aligned}
\int_\Omega \mc{F}(\mr{z}) \, \ed \hat{\bs{\mu}}_n(\mr{z}) &= \frac{1}{2\pi} \int_{0}^{2\pi}  \mc{F}([\hat{\varphi}^n(\theta), \hat{\lambda}^n(\theta)] ) \,\ed \theta\\
&= \frac{1}{2\pi} \sum_{i=0}^{n-1} \int_{0}^{2\pi/n}  \mc{F}\left(\left[\frac{2\pi i}{n} + \frac{1}{n}\,\varphi^{\epsilon_n}( n\theta), {\lambda}^{\epsilon_n}(n\theta)\right] \right) \,\ed \theta\,,
\end{aligned}
\end{equation}
and similarly,
\begin{equation}\label{eq:decomposition2}
\int_\Omega \mc{F}(\mr{z}) \, \ed \hat{\bs{\mu}}^*(\mr{z}) = \frac{1}{4\pi} \sum_{i=0}^{n-1} \int_{0}^{2\pi/n}  \mc{F}\left(\left[\frac{2\pi i}{n} + \theta, {\zeta}^0(\theta) \right] \right) + \mc{F}\left(\left[\frac{2\pi i}{n} + \theta, {\zeta}^1(\theta) \right] \right) \,\ed \theta\,.
\end{equation}
We consider separately each integral in the sums in equation \eqref{eq:decomposition1} and \eqref{eq:decomposition2}. Rescaling the integrals in $\theta$ and using Lipschitz continuity of $\mc{F}$, we observe that the result is proven if   
\begin{equation}\label{eq:sumreform}
\lim_{n\to+\infty} \frac{1}{n} \sum_{i=0}^{n-1} \left| \frac{1}{2\pi} \int_{0}^{2\pi} \mc{F}\left(\left[\frac{2\pi i}{n} + \frac{1}{n}\,\varphi^{\epsilon_n}( \theta), {\lambda}^{\epsilon_n}(\theta)\right] \right) \,\ed \theta  - I_i^n\right| =0\,,
\end{equation}
where
\begin{equation}
I_i^n= \frac{1}{2} \mc{F}\left(\left[\frac{2\pi i}{n} , {\zeta}^0\left(\frac{2\pi i}{n}\right) \right] \right)+ \frac{1}{2} \mc{F}\left(\left[\frac{2\pi i}{n} , {\zeta}^1\left(\frac{2\pi i}{n}\right) \right] \right) \,.
\end{equation}
For any fixed sufficiently large $n$, we need to provide an appropriate bound for each term in the sum in equation \eqref{eq:sumreform}.
For any integer $i$ with $0 \leq i \leq n-1$, we have
\begin{equation}
\begin{aligned}
E^{i,n} & \coloneqq \left| \frac{1}{2\pi} \int_{0}^{2\pi} \mc{F}\left(\left[\frac{2\pi i}{n} +\frac{1}{n}\,\varphi^{\epsilon_n}( \theta), {\lambda}^{\epsilon_n}(\theta)
\right] \right) \,\ed \theta  - I_i^n\right|\\
 & \leq \frac{1}{2\pi}  \left| \int_{0}^{2\pi} \mc{F}\left(\left[\frac{2\pi i}{n} +\frac{1}{n}\,\varphi^{\epsilon_n}( \theta), {\lambda}^{\epsilon_n}(\theta) \right] \right) \,\ed \theta  -\int_{0}^{2\pi} \mc{F}\left(\left[\frac{2\pi i}{n}, {\lambda}^{\epsilon_n}(\theta) \right] \right) \,\ed \theta \right|\\
 &\quad +   \left|\frac{1}{2\pi}\int_{0}^{2\pi} \mc{F}\left(\left[\frac{2\pi i}{n}, {\lambda}^{\epsilon_n}(\theta) \right] \right) \,\ed \theta   - I_0^n\right| \coloneqq E^n_0+E^n_1\,.
\end{aligned}
\end{equation} 
Observe that for $\alpha\in[0,\pi/2]$, $\sqrt{1-\sin(\alpha)} = \sqrt{2} \cos(\alpha/2+\pi/4)$ and $\sqrt{1+\sin(\alpha)} = \sqrt{2} \sin(\alpha/2+\pi/4)$ therefore
\begin{equation}
{\lambda}^{\epsilon_n}(\theta) = \sqrt{\partial_\theta \varphi^{\epsilon_n}_t(\theta)} = 
\left\{
\begin{array}{ll}
\sqrt{2} \cos\left( \frac{\pi t}{4 (T+\epsilon_n)} + \frac{\pi}{4} \right) & \text{if}~ \frac{\pi}{2}<\theta<\frac{3\pi}{2}\,, \\
\sqrt{2} \sin\left( \frac{\pi t}{4 (T+\epsilon_n)} + \frac{\pi}{4} \right) & \text{otherwise}\,.
\end{array}
\right. 
\end{equation}
Since ${\lambda}^{\epsilon_n}$ is piecewise constant in $\theta$ we can write
\begin{equation}
\frac{1}{2\pi}\int_{0}^{2\pi} \mc{F}\left(\left[\frac{2\pi i}{n} , {\lambda}^{\epsilon_n}(\theta) \right] \right) \ed \theta = \frac{1}{2}  \mc{F}\left(\left[\frac{2\pi i}{n}, {\lambda}^{\epsilon_n}(0) \right] \right) +   \frac{1}{2} \mc{F}\left(\left[\frac{2\pi i}{n}, {\lambda}^{\epsilon_n}\left( {\pi}\right) \right] \right)\,.
\end{equation}
Comparing the expression for $\zeta^0$ and $\zeta^1$ with that of $\lambda^{\epsilon_n}$ and using the fact that $\mc{F}$ is Lipschitz we obtain
$ E^n_1 \leq C(\epsilon_n)$, where $C(\epsilon_n)>0$ is a constant depending on $\epsilon_n$ and $L$ such that $C(\epsilon_n)\rightarrow 0$ as $n\rightarrow +\infty$. A similar argument holds for $E^n_0$ and therefore we can find a constant $C_n$ independent of $i$ such that $E^{i,n}\leq C_n$ and $C_n\rightarrow 0$ as $n\rightarrow +\infty$. This implies equation \eqref{eq:sumreform}.

Finally, convergence of the action is a consequence of lemma \ref{lem:collision}. In particular, it is immediate to verify that $\mc{A}(\hat{\bs{\mu}}^*) = {\pi^2}/(16T)$. Moreover, by the same reasoning as in the proof of lemma \ref{lem:collision} and the change of variables in equation \eqref{eq:decomposition1} we obtain that the action $\mc{A}(\hat{\bs{\mu}}_n)$  is given by
\begin{equation}
\begin{aligned}
\frac{1}{2\pi} \int_{0}^{2\pi} \mc{A}([\hat{\varphi}^n(\theta),\hat{\lambda}^n(\theta)])\, \ed \theta 
&= \frac{1}{2\pi} \sum_{i=0}^{n-1} \int_{0}^{2\pi/n}  \mc{A}\left(\left[\frac{1}{n}\,{\varphi}^{\epsilon_n}( n\theta), {\lambda}^{\epsilon_n}(n\theta)\right] \right) \,\ed \theta\\
&= \frac{1}{2\pi} \int_{0}^{2\pi}  \mc{A}\left(\left[\frac{1}{n}\,{\varphi}^{\epsilon_n}( n\theta), {\lambda}^{\epsilon_n}(n\theta)\right] \right) \,\ed \theta\,.
\end{aligned}
\end{equation}
Therefore, the limit of $\mc{A}(\hat{\bs{\mu}}_n)$ for $n\rightarrow+\infty$ is the same to that in equation \eqref{eq:actionlimit}.
\end{proof}

In figure \ref{fig:shockflowm}, we give an illustration of the flow defined in equation \eqref{eq:hatflowdef} for fixed $n$ both in terms of particle trajectories and as a measure on the cone for $R=1$. It can be seen that convergence towards the measure $\bs{\mu}^*$ defined in lemma \ref{lem:mcollision} is due to the appearance of fast oscillations in the Jacobian together with the fact that particles tend to stay still as $n\rightarrow +\infty$. 

We can use the flows defined in lemma \ref{lem:mcollision} to construct a sequence that converges to the generalized flow $\bs{\mu}^*$ in theorem \ref{th:rots1} but where no rotation occurs. The construction consists in concatenating in time the flows in lemma \ref{lem:mcollision} so that a small portion of the domain stretches and then return to its original size. This is shown in figure \ref{fig:shockflowmm}. The convergence result is stated explicitly in the following proposition.

\begin{proposition}\label{prop:convergences1}
Let $\hat{\varphi}^n:[0,T] \times S^1_R \rightarrow S^1_R$ be the sequence defined in lemma \ref{lem:mcollision} and for each $n\in \mathbb{N}$ 
let ${\varphi}^n:[0,T] \times S^1_R \rightarrow S^1_R$ be defined by ${\varphi}^n_t = \varphi^n_{T-t}$ and
\begin{equation}\label{eq:flowseqs1}
{\varphi}^n_t(\theta) = 
\left\{
\begin{array}{ll}
\hat{\varphi}^n_{T-4t}((\hat{\varphi}^n_{T})^{-1}(\theta)) & \text{if}~t\leq \frac{T}{4}\,,\\
\hat{\varphi}^n_{4t-T}\left((\hat{\varphi}^n_{T})^{-1}(\theta)+ \frac{\pi}{n}\right)- \frac{\pi}{n} & \text{if}~\frac{T}{4}< t\leq \frac{T}{2}\,.
\end{array}
\right.
\end{equation} 
Then ${\bs{\mu}}_n \coloneqq ({\varphi}^n, \sqrt{\mr{Jac}({\varphi}^n)})_\# \rho_0$ can be rescaled to a sequence $\tilde{\bs{\mu}}_n\rightharpoonup {\bs{\mu}}^*$,
where ${\bs{\mu}}^*$ is defined as in equation \eqref{eq:nondetnorot} with
\begin{equation}
\zeta^0_t(\theta) = \sqrt{2}\sin\left(\frac{\pi t}{T}\right)\,, \quad \zeta^1_t (\theta) = \sqrt{2}\left| \cos\left(\frac{\pi t}{T}\right) \right| \,.
\end{equation}
Moreover, $\mc{A}({\bs{\mu}}_n) \rightarrow \mc{A}({\bs{\mu}}^*) = {\pi^2}/{T}$.
\end{proposition}
\begin{proof}
The rescaling to be performed in order to obtain the sequence $\tilde{\bs{\mu}}_n$ is given by 
\begin{equation}\label{eq:resct4}
\tilde{\bs{\mu}}_n =\mr{dil}_{r_{T/4},2} {\bs{\mu}}_n\,.
\end{equation}
In fact, by lemma \ref{lem:rescaling},  $\tilde{\bs{\mu}}_n$ is concentrated on paths such that $r_{T/4}=1$. Then, the result can be deduced from lemmas \ref{lem:mcollision} and \ref{lem:collision}. 
\end{proof}

\begin{remark}\label{rem:regularitys1}
The maps defined by equation \eqref{eq:flowseqs1} are piecewise smooth in space since their Jacobian is piecewise constant with a finite number of discontinuities. However, using a regularization argument, it is not difficult to construct a sequence of smooth diffeomorphisms satisfying proposition \ref{prop:convergences1}. For this it is sufficient to repeat the construction above using a regularized version of the linear peakon/anti-peakon collision, which can be defined by convolution of the flow map with a sequence of positive symmetric mollifiers.
\end{remark}

\begin{figure}
\centering
    \input{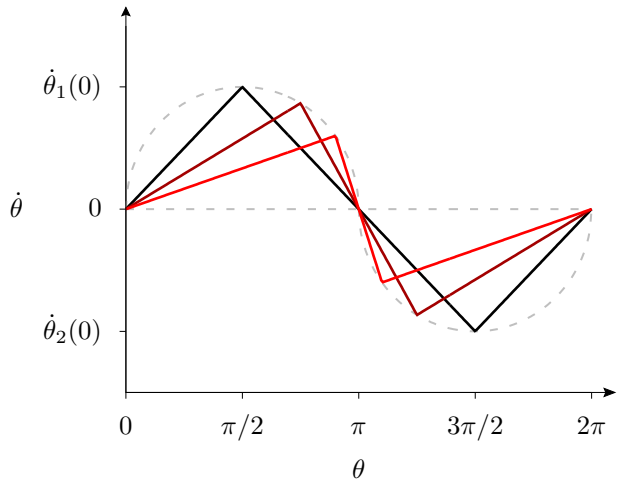}
\caption{Velocity field evolution for the linear peakon/anti-peakon solution of the Hunter-Saxton equation.}\label{fig:shock}
\end{figure}

\begin{figure}
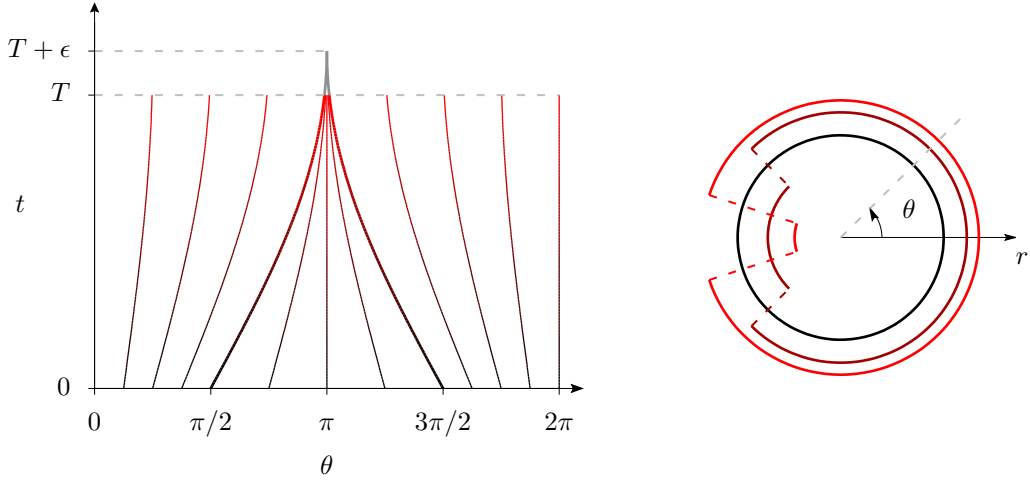

\centering
    \input{shockflow}
    \input{shockcone}
\caption{Particle trajectories $t\mapsto \varphi^\epsilon_t(\theta)$ for the linear peakon/anti-peakon solution (left) and support of fixed time marginals for the measure $(\varphi^\epsilon,\sqrt{\mr{Jac}(\varphi^\epsilon)})_\# \rho_0$ (right).}\label{fig:shockflow}
\end{figure}

\begin{figure}
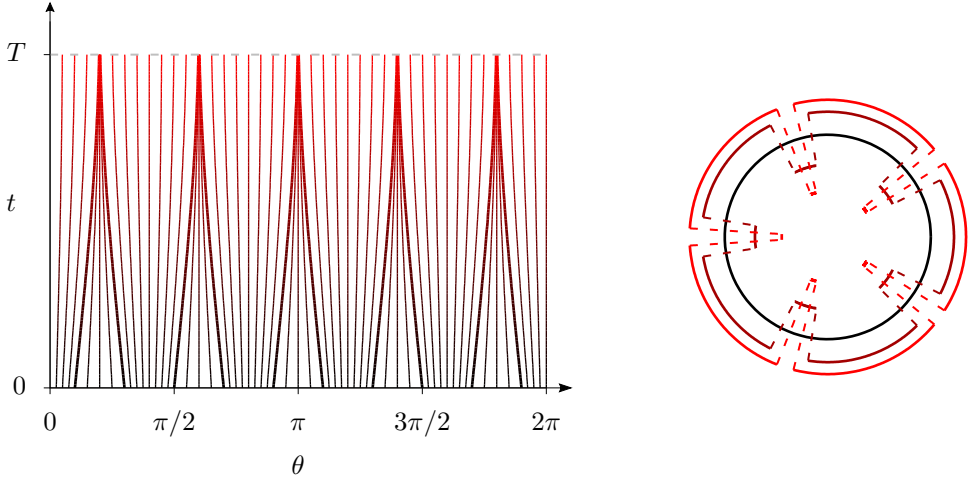

\centering
    \input{shockflowm}    
    \input{shockconem}
\caption{Particle trajectories $t\mapsto\hat{\varphi}^n_t(\theta)$ relative to the map constructed in proposition \ref{lem:mcollision}  (left) and support of fixed time marginals for the measure $(\hat{\varphi}^n,\sqrt{\mr{Jac}(\hat{\varphi}^n)})_\# \rho_0$ (right), for $n=5$.}\label{fig:shockflowm}
\end{figure}

\begin{figure}
\centering
    \input{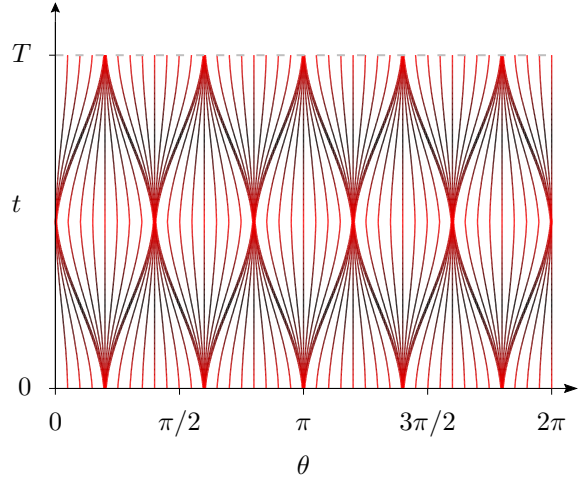}  
\caption{Particle trajectories $t\mapsto{\varphi}^n_t(\theta)$ relative to the map constructed in proposition \ref{prop:convergences1} for $n=5$.}\label{fig:shockflowmm}
\end{figure}

\subsection{Construction of a generalized solution on the torus}
We now consider the generalized $H(\mr{div})$ geodesic problem on the torus $T^2_{1,R} \coloneqq S^1_1 \times S^1_R$, with one of the two radii set to one. We consider as boundary condition a uniform rotation on the torus in which each particle rotates of half of the length on both circles. For this specific boundary condition we can construct a generalized minimizer using the construction of the previous section which realizes smaller action than \corr{any deterministic smooth flow}.  

\corr{
In the following lemma we provide a lower bound on the action associated with a deterministic minimizer.
\begin{lemma}\label{lem:lowbound2d}
Suppose that the smooth curve $t\in[0,T] \mapsto \varphi_t^* \in \mr{Diff}(T^2_{1,R})$ is a minimizer for the deterministic $H(\mr{div})$ geodesic problem \ref{prob:det}, with $\varphi_0 = \mr{Id}$, $\varphi_T = \fmap$ and where $\fmap$ is given in polar coordinates by $\fmap: (\theta,\phi) \in \mathbb{R}^2/({2\pi}\mathbb{Z})^2 \rightarrow (\theta+\pi,\phi+\pi) \in \mathbb{R}^2/({2\pi}\mathbb{Z})^2 $. Denote by $\rho_0 = (2\pi)^{-2}\ed \theta \ed \phi$ the normalized Lebesgue measure on the torus.  Then,
\begin{equation}
\int_{T^2_{1,R}} \mc{A}([\varphi^*,\sqrt{\mr{Jac}(\varphi^*)}])\, \ed \rho_0 \geq \frac{\tanh (2 \pi \sqrt{1+R^2})}{ 2T} \pi^2  \sqrt{1+R^2}
\,.
\end{equation} 
\end{lemma}
\begin{proof}
By an appropriate change of coordinates, it is sufficient to show the result on $T^2_{R_1,R_2}$ with 
\begin{equation}
R_1 = \frac{R}{\sqrt{1+R^2}} \,, \quad R_2 = \sqrt{1+R^2}\,,
\end{equation}
and $\fmap: (\theta,\phi) \in \mathbb{R}^2/({2\pi}\mathbb{Z})^2 \rightarrow (\theta,\phi+\pi) \in \mathbb{R}^2/({2\pi}\mathbb{Z})^2$. 
Let $t\in [0,T] \mapsto \varphi^*_t=(\varphi^\theta_t,\varphi^\phi_t) \in \mr{Diff}(T^2_{R_1,R_2})$ be a smooth minimizer for these boundary conditions. Define the flow $t\in [0,T] \mapsto \tilde{\varphi}_t \in \mr{Diff}(T^2_{R_1,R_2})$ by
\begin{equation}
\tilde{\varphi}_t(\theta,\phi) = \left \{
\begin{array}{ll}
(\varphi^\theta_t(2 \theta,\phi)/2,\varphi^\phi_t(2\theta,\phi)) & \text{if } 0<\theta\leq \pi\,,\\
(\varphi^\theta_t(2 \theta,\phi)/2 +\pi,\varphi^\phi_t(2\theta,\phi)) & \text{if } \pi<\theta\leq 2\pi\,.
\end{array}
\right.
\end{equation}
Then, by direct computation,
\begin{equation}
\int_{T^2_{1,R}} \mc{A}([\tilde\varphi,\sqrt{\mr{Jac}(\tilde\varphi)}])\, \ed \rho_0 \leq \int_{T^2_{1,R}} \mc{A}([\varphi^*,\sqrt{\mr{Jac}(\varphi^*)}])\, \ed \rho_0 \,;
\end{equation}
moreover, the inequality is strict unless $\dot{\varphi}^\theta_t = 0$ for all $t \in(0,T)$. Since $\varphi^*$ is a minimizer, we conclude that we must have $\varphi^\theta_t = \theta$ for all $t \in [0,T]$. Then, we obtain the result applying lemma \ref{lem:lowbound1d}.
\end{proof}
}

\begin{theorem}\label{th:rott2}
Consider the generalized $H(\mr{div})$ geodesic problem on $T^2_{1,R}$ with coupling constraint given by uniform rotation on both circles by half of the circles length, i.e.\ in polar coordinates $\fmap: (\theta,\phi) \in \mathbb{R}^2/({2\pi}\mathbb{Z})^2 \rightarrow (\theta+\pi,\phi+\pi) \in \mathbb{R}^2/({2\pi}\mathbb{Z})^2 $. Denote by $\rho_0 = (2\pi)^{-2}\ed \theta \ed \phi$ the normalized Lebesgue measure on the torus. Then,
the dynamic plan $\bs{\mu}^*$ in lemma \ref{lem:diameter}, i.e.\ equation \eqref{eq:nondetrot} with
\begin{equation}\label{eq:nondetrot2}
\zeta^0_t(\theta,\phi) = \sqrt{2}\sin(\sqrt{P^*}t)\,, \quad \zeta^1_t (\theta,\phi) = \sqrt{2}| \cos(\sqrt{P^*} t) | \,, \quad 
\psi^1_t(\theta,\phi) = \left\{
\begin{array}{ll}
(\theta,\phi) & t\leq T/2\,,\\
(\theta+\pi,\phi+\pi) & t> T/2\,,
\end{array}
\right.
\end{equation}
where  $P^* = (\pi/T)^2$, is a minimizer, whereas the constant speed rotation is not a minimizer. \corr{Moreover, if $R$ is sufficiently large no smooth flow can be a minimizer}.
\end{theorem}   
\begin{proof}
Consider the functional $\pi_\theta:\Omega(T^2_{1,R}) \rightarrow \Omega(S^1_{1})$ defined by
\begin{equation}
\pi_\theta(\mr{z}) \coloneqq ( t\in[0,T] \mapsto [\theta_t,r_t] \in \cone)\,,
\end{equation}
for any $\mr{z} = ( t\in[0,T] \mapsto [(\theta_t,\phi_t),r_t] \in \cone)$. In other words, $\pi_\theta$ applies at each time the canonical projection on the circle of unit radius. We observe that for any admissible dynamic plan $\bs{\mu}\in \mc{P}(\Omega(T^2_{1,R}))$ for the generalized $H(\mr{div})$ geodesic problem on the torus, 
\begin{equation}
\bs{\mu}_\theta \coloneqq \pi_{\theta\#} \bs{\mu} \in \mc{P}(\Omega(S^1_1))
\end{equation}
 is admissible for the generalized $H(\mr{div})$ geodesic problem on $S^1_1$ with boundary conditions associated with the map $\fmap_\theta: \theta \in \mathbb{R}/{2\pi}\mathbb{Z} \rightarrow \theta+\pi$. In fact, if for example $\bs{\mu}$ satisfies the homogeneous marginal constraint with respect to the normalized measure $(2\pi)^{-2} \ed \theta \ed \phi$, then also $\bs{\mu}_\theta$ satisfies the same constraint since for any $t\in[0,T]$ and $f\in C^0(S^1_1)$,
\begin{equation}
\int_{\Omega(S_1^1)} f(\theta_t) r_t^2\, \ed \bs{\mu}_\theta(\mr{z}) = \int_{\Omega(T_{1,R}^2)} f(\theta_t) r_t^2\, \ed \bs{\mu}(\mr{z}) =  \frac{1}{2\pi}\int_{S^1_1} f(\theta) \, \ed \theta\,, 
\end{equation}
and similarly for the coupling constraint. The problem on $S^1_1$ admits a non-deterministic minimizer, which was given in theorem \ref{th:rots1} and we denote it by $\bs{\mu}^*_\theta$. Then, we have for any admissible $\bs{\mu} \in \mc{P}(\Omega(T^2_{1,R}))$,
\begin{equation}
\mc{A}(\bs{\mu}) \geq \mc{A}(\bs{\mu}_\theta) \geq \mc{A}(\bs{\mu}^*_\theta) = \frac{\pi^2}{T} \,.
\end{equation}
However, by lemma \ref{lem:diameter}, the dynamic plan $\bs{\mu}^*$ defined by equation \eqref{eq:nondetrot2} satisfies $\mc{A}(\bs{\mu}^*)=  \mc{A}(\bs{\mu}^*_\theta)$ and so it must be a minimizer. On the other hand, the action for constant speed rotation is given by $\mc{A}^{rot} = \pi^2 (R^2+1)/{T}$ and therefore such a solution cannot be a minimizer since $R>0$. \corr{Similarly, if $R$ is sufficiently large, by lemma \ref{lem:lowbound2d}, no smooth minimizer can exist, since otherwise its action would be strictly larger than $\pi^2/T$.}
\end{proof}

\subsection{Approximation of a generalized minimizer on the torus}

The generalized minimizer in theorem \ref{th:rott2} is very similar to its one-dimensional counterpart of theorem \ref{th:rots1}. Importantly, however, the extra dimension gives us enough flexibility to produce deterministic approximations, which is the main result of this section. Such approximations will be similar in spirit to those presented in the one-dimensional case. In brief, using again peakon/anti-peakon collisions we will be able to reach the final configuration by moving two complementary subsets of the domain at different times, when they occupy a small volume.     

\begin{theorem}\label{th:approxt2}
Let ${\bs{\mu}}^*$ and $\fmap$ be the minimizer and the coupling, respectively, defined in theorem \ref{th:rott2} on the torus $M = T^2_{1,R}$. There exists a sequence of continuous flow maps ${\varphi}^n:[0,T] \times M \rightarrow M$, $n\in \mathbb{N}$, such that for every $t\in [0,T]$, ${\varphi}_t^n:M \rightarrow M$ is smooth almost everywhere, and
\begin{itemize}
\item for all $n\in \mathbb{N}$, ${\varphi}_0^n = \mr{Id}$ and ${\varphi}_T^n = h$;
\item the sequence ${\bs{\mu}}_n \coloneqq ({\varphi}^n, \sqrt{\mr{Jac}({\varphi}^n)})_\# \rho_0$ can be rescaled to a sequence $\tilde{\bs{\mu}}_n\rightharpoonup {\bs{\mu}}^*$;
\item $\mc{A}(\tilde{\bs{\mu}}_n) \rightarrow \mc{A}({\bs{\mu}}^*)$.
\end{itemize}
\end{theorem}
\begin{proof}
For simplicity, we prove the result for $R=1$ but the argument presented here applies for any $R>0$.
In addition, performing an appropriate change of variables, one can easily verify that it is sufficient to prove the theorem with $\fmap:(\theta,\phi) \in \mathbb{R}^2/({2\pi}\mathbb{Z})^2 \rightarrow (\theta,\phi+\pi)$  and $\bs{\mu}^*$ defined as in equation \eqref{eq:nondetrot2}, but with $\psi^1$ defined by
\begin{equation}
\psi^1_t(\theta,\phi) = \left\{
\begin{array}{ll}
(\theta,\phi) & t\leq T/2\,,\\
(\theta,\phi+\pi) & t> T/2\,.
\end{array}
\right.
\end{equation}
For each $n\in\mathbb{N}$, the map $\varphi^n$ will be constructed using two basic flows. The first is defined as follows. Fix a sequence $\epsilon_n = \epsilon_0/n^3$, $n\in \mathbb{N}$, where $\epsilon_0$ is a sufficiently small constant. 
Moreover, for any $\epsilon>0$ consider the set $B_\epsilon \subset S^1_1$ defined by
\begin{equation}
B_\epsilon \coloneqq \bigcup_{i=0}^{n-1} \left[\frac{\pi }{n}(2i+1)-\frac{\epsilon}{2},\frac{\pi }{n}(2i+1)+\frac{\epsilon}{2}\right]\,,
\end{equation}
and let $\phi_{rot}^n: S^1_1\rightarrow S^1_1$ such that $0\leq \phi_{rot}^n\leq \pi$, $\phi_{rot}^n(\theta) = \pi$ for all $\theta\in B_{\epsilon_n}$ and $\phi_{rot}^n(\theta) = 0$ for all $\theta\in S^1_1\setminus B_{2\epsilon_n}$. 
For $k=0,1$, we let $\varphi^{k,n}_{rot}:[0,\sqrt{\epsilon_n}] \times T^2_{1,1} \rightarrow T^2_{1,1}$ be the flow defined by 
\begin{equation}
(\varphi^{0,n}_{rot})_t(\theta,\phi) \coloneqq 
\left(\theta, \phi + \frac{t}{\sqrt{\epsilon_n}} \phi^n_{rot}(\theta)\right) \,,\, (\varphi^{1,n}_{rot})_t(\theta,\phi) \coloneqq 
\left(\theta, \phi + \frac{t}{\sqrt{\epsilon_n}} (\pi- \phi^n_{rot}(\theta))\right)\,.
\end{equation}

Consider now the flow $\hat{\varphi}^n$ defined in equation \eqref{eq:hatflowdef}, with $\epsilon_n$ defined as above. With a slight abuse of notation, we will also denote by $\hat{\varphi}^n$ its canonical extension to the torus which leaves the $\phi$ coordinate fixed. Moreover, for any $\alpha\in\mathbb{R}/{2\pi}\mathbb{Z}$ denote by $R^\theta_{\alpha}:T^2_{1,1} \rightarrow T^2_{1,1}$ the map $R_\alpha^\theta(\theta,\phi)\coloneqq (\theta+\alpha,\phi)$. Then, we define the flow ${\varphi}^{0,n}_{exp}:[\sqrt{\epsilon_n}, T/2]\times  T^2_{1,1} \rightarrow T^2_{1,1}$ by
\begin{equation}
({\varphi}^{0,n}_{exp})_t(\theta,\phi) \coloneqq
\left\{
\begin{array}{ll}
\hat{\varphi}^n_{a_n(t)}( (\hat{\varphi}^n_{T})^{-1}(\theta,\phi)) & \text{if}~t\leq \frac{T}{4}\,,\\
R^\theta_{-\pi/n}\circ\hat{\varphi}^n_{4t-T}\left(R^\theta_{\pi/n}\circ(\hat{\varphi}^n_{T})^{-1}(\theta,\phi)\right) & \text{if}~\frac{T}{4}< t\leq \frac{T}{2}\,,
\end{array}
\right.
\end{equation} 
where $a_n(t) \coloneqq  T(T-4t)(T-4\sqrt{\epsilon_n})^{-1}$. Note that setting $\epsilon_n=0$ this flow coincides with the canonical extension to the torus of the flow in equation \eqref{eq:flowseqs1}. Similarly, 
\begin{equation}
{\varphi}^{1,n}_{exp} \coloneqq R^\theta_{-\pi/n}\circ{\varphi}^{0,n}_{exp} \circ R^\theta_{\pi/n}.
\end{equation}

We construct the sequence $\varphi^n$ by glueing together the maps $\varphi^{k,n}_{rot}$ and ${\varphi}^{k,n}_{exp}$ so that for each $n\in \mathbb{N}$ the final flow consists of four stages: in the first, $n$ stripes of the domain rotate while the rest of the domain stays put as prescribed by $\varphi^{0,n}_{rot}$; in the second, the stripes expand up to a symmetric configuration in which the rest of the domain occupies stripes of the same size, as prescribed by ${\varphi}^{0,n}_{exp}$; in the third, the rest of the points rotate as prescribed by $\varphi^{1,n}_{rot}$; finally, we use ${\varphi}^{1,n}_{exp}$ to compress the stripes to their original size. More precisely,
\begin{equation}\label{eq:compflow}
{\varphi}^n_t \coloneqq
\left\{
\begin{array}{ll}
(\hat{\varphi}^{0,n}_{rot})_t & \text{if}~t\leq \sqrt{\epsilon_n}\,,\\
(\hat{\varphi}^{0,n}_{exp})_{t} \circ (\hat{\varphi}^{0,n}_{rot})_{\sqrt{\epsilon_n}} & \text{if}~\sqrt{\epsilon_n}<t\leq \frac{T}{2}\,,\\
(\hat{\varphi}^{1,n}_{rot})_{t-T/2}\circ (\hat{\varphi}^{0,n}_{exp})_{T/2} \circ (\hat{\varphi}^{0,n}_{rot})_{\sqrt{\epsilon_n}} & \text{if}~\frac{T}{2}<t\leq \frac{T}{2}+\sqrt{\epsilon_n}\,,\\
(\hat{\varphi}^{1,n}_{exp})_{t-T/2} \circ 
(\hat{\varphi}^{1,n}_{rot})_{\sqrt{\epsilon_n}}\circ 
(\hat{\varphi}^{0,n}_{exp})_{T/2} \circ 
(\hat{\varphi}^{0,n}_{rot})_{\sqrt{\epsilon_n}} & \text{if}~\frac{T}{2}+\sqrt{\epsilon_n}<t\leq T \,.\\
\end{array}
\right.
\end{equation} 
A graphical representation of this flow is given in figure \ref{fig:shockflow2d} for $n=1$ (so that we have only one stripe) and in the original coordinates (so that the boundary conditions are those associated with double rotation).

Note that the flow defined in equation \eqref{eq:compflow} is very similar to the one defined in proposition \ref{prop:convergences1}, whose canonical extension to the torus will be denoted by $\varphi^{0,n}$. As in proposition \ref{prop:convergences1}, we define again the rescaled measure $\tilde{\bs{\mu}}_n$ using equation \eqref{eq:resct4}.
This means that for any Lipschitz continuous bounded functional ${\mc F}:\Omega \rightarrow \mathbb{R}$,
\begin{equation}\label{eq:dilt4}
\int_\Omega \mc F(\mr{z})\,\ed \tilde{\bs{\mu}}_n(\mr{z}) = \frac{1}{4\pi^2} 
\int_{T^2_{1,1}} \mc F\left(\left[ {\varphi}^n\circ ({\varphi}^n_{T/4})^{-1}(\theta,\phi), \bar{\lambda}^n(\theta,\phi)   \right]\right) \ed  \theta\, \ed \phi\,,
\end{equation}
where
\begin{equation}
 \bar{\lambda}^n_t \coloneqq \left(\frac{\mr{Jac}(\varphi^n_t)}{\mr{Jac}(\varphi^n_{T/4})}\right)^{1/2}\circ({\varphi}^n_{T/4})^{-1} = \left(\mr{Jac}(\varphi^n_t\circ(\varphi^n_{T/4})^{-1})\right)^{1/2}\,.
\end{equation}
Note that equation \eqref{eq:dilt4} is a direct consequence of the definition of the dilation map and the change of variables formula.
Due to proposition \ref{prop:convergences1} and the way $\varphi^n$ is constructed, to prove the convergence $\tilde{\bs{\mu}}_n\rightharpoonup \bs{\mu}^*$, it is sufficient to focus on the interval $[0,T/2]$ and check that $I^n\rightarrow 0$, where
\begin{equation}
I^n \coloneqq \int_{T^2_{1,1}} \sup_{t\in[0,T/2]} d_{\cone}( [\varphi^n_t\circ({\varphi}^n_{T/4})^{-1},\bar{\lambda}^n_t], [\varphi^{0,n}_t\circ({\varphi}^{0,n}_{T/4})^{-1},\bar{\lambda}^{0,n}_t]) \,\ed \theta \, \ed\phi 
\end{equation}
and $\bar{\lambda}^{0,n}_t \coloneqq \left(\mr{Jac}(\varphi^n_t\circ(\varphi^{0,n}_{T/4})^{-1})\right)^{1/2}$.  Because of the similar structure of the flows $\varphi^n$ and $\varphi^{0,n}$, $I^n$ reduces to
\begin{equation}
I^n = \int_{T^2_{1,1}} \sup_{t\in[0,T/4]} d_{\cone}( [\varphi^n_t\circ({\varphi}^n_{T/4})^{-1},\bar{\lambda}^n_t], [\varphi^{0,n}_t\circ({\varphi}^{0,n}_{T/4})^{-1},\bar{\lambda}^{0,n}_t]) \,\ed \theta \, \ed\phi\,.
\end{equation}
Let $A_{\epsilon} \coloneqq \varphi^{0,n}_{T/4}(B_\epsilon \times S^1_1)$, for any $\epsilon>0$. We decompose $I^n = I^{0,n}+I^{1,n}$ where $I^{0,n}$ and $I^{1,n}$ are the integrals over $A_{2\epsilon_n}$ and $T^{2}_{1,1}\setminus A_{2\epsilon_n}$, respectively.
Define for $0\leq a<b\leq T/4$
\begin{equation}
I^{0,n}_{a,b} \coloneqq  \int_{A_{2\epsilon_n}} \sup_{t\in[a,b]} d_{\cone}( [\varphi^n_t\circ({\varphi}^n_{T/4})^{-1},\bar{\lambda}^n_t], [\varphi^{0,n}_t\circ({\varphi}^{0,n}_{T/4})^{-1},\bar{\lambda}^{0,n}_t]) \,\ed \theta \, \ed\phi\,.
\end{equation}
We have $I^n\leq  I^{0,n}_{0,\sqrt{\epsilon_n}} +I^{0,n}_{\sqrt{\epsilon_n},T/4}+I^{1,n}$. By continuity of the flow maps it is easy to verify that $I^{0,n}_{\sqrt{\epsilon_n},T/4}\rightarrow 0$ and $I^{1,n}\rightarrow 0$ as $n\rightarrow +\infty$. On the other hand, by construction $0< \bar{\lambda}^n,\bar{\lambda}^{0,n}<\sqrt{2}$. Therefore, by the triangular inequality
\begin{equation}
\begin{aligned}
I^{0,n}_{0,\sqrt{\epsilon_n}} &\leq \int_{A_{2\epsilon_n}} \sup_{t\in[0,\sqrt{\epsilon_n}]}( \bar{\lambda}^n_t +  \bar{\lambda}^{0,n}_{t}) \, \ed \theta \, \ed \phi\,,\\
&\leq 4\pi n\epsilon_n (2\sqrt{2}) + \int_{B_{\pi/n}\times S^1_1} \sup_{t\in[0,\sqrt{\epsilon_n}]}( \bar{\lambda}^n_t +  \bar{\lambda}^{0,n}_{t}) \, \ed \theta \, \ed \phi\,,\\
\end{aligned}
\end{equation} 
where on the second line, we decomposed the integral over the part of $A_{2\epsilon_n}$ that gets stretched and the part that gets compressed under $\varphi^n \circ (\varphi^n_{T/4})^{-1}$ for $t\in [0,\sqrt{\epsilon_n}]$.
In particular, the integrand in the second line tends to 0 as $n\rightarrow +\infty$, which yields $I^n \rightarrow 0$. A similar argument can be applied on the interval $[T/2, T]$, which proves that $\tilde{\bs{\mu}}_n \rightharpoonup{\bs{\mu}}^*$.

In order to prove convergence of the action, in view of lemma \ref{lem:collision}, it is sufficient to show
\begin{equation}
\int_{T^2_{1,1}} \mc{A}([\varphi^{k,n}_{rot}(\theta,\phi), 1])\,\ed\theta\,\ed \phi \rightarrow 0\,,
\end{equation}
for $k=0,1$, as $n\rightarrow+\infty$, where the action is computed over the time interval $[0,\sqrt{\epsilon_n}]$. For all $n\in \mathbb{N}$, under the flow $\varphi^{k,n}_{rot}$ only points with $\theta\in B_{2\epsilon_n}$ rotate with velocity bounded by $\pi/\sqrt{\epsilon_n}$; hence
\begin{equation}
\int_{T^2_{1,1}} \mc{A}([\varphi^{k,n}_{rot}(\theta,\phi), 1])\,\ed\theta\,\ed \phi \leq 2 \pi (n 2\epsilon_n) \frac{\pi^2}{\sqrt{\epsilon_n}} = \sqrt{\frac{\epsilon_0}{n}} 4\, \pi^3  \,,
\end{equation}
which concludes the proof.
\end{proof}

\begin{remark}\label{rem:regularityt2}
As for the one-dimensional case (see remark \ref{rem:regularitys1}), the maps defined by equation \eqref{eq:compflow} are piecewise smooth in space since their Jacobian is piecewise constant with a finite number of discontinuities. Also in this case, it is sufficient to repeat the construction above using a regularized version of the linear peakon/anti-peakon collision, to obtain a sequence of smooth diffeomorphisms satisfying theorem \ref{th:approxt2}.
\end{remark}

\corr{
\begin{remark}\label{re:shinerlman}
In theorem \ref{th:rots1} we proved that our relaxation is not tight on $S^1_R$ (for $R$ sufficiently large), whereas theorem \ref{th:rott2} suggests it is tight for $d\geq 2$. It should be noted that the situation is similar for the incompressible Euler equations. In fact, Shnirelman proved that Brenier's relaxation is not tight for $d = 2$ but it is tight when $d=3$, as in this case any generalized incompressible flows can be approximated using deterministic maps \cite{shnirelman1994generalized}. 
\end{remark}
}

\begin{figure}
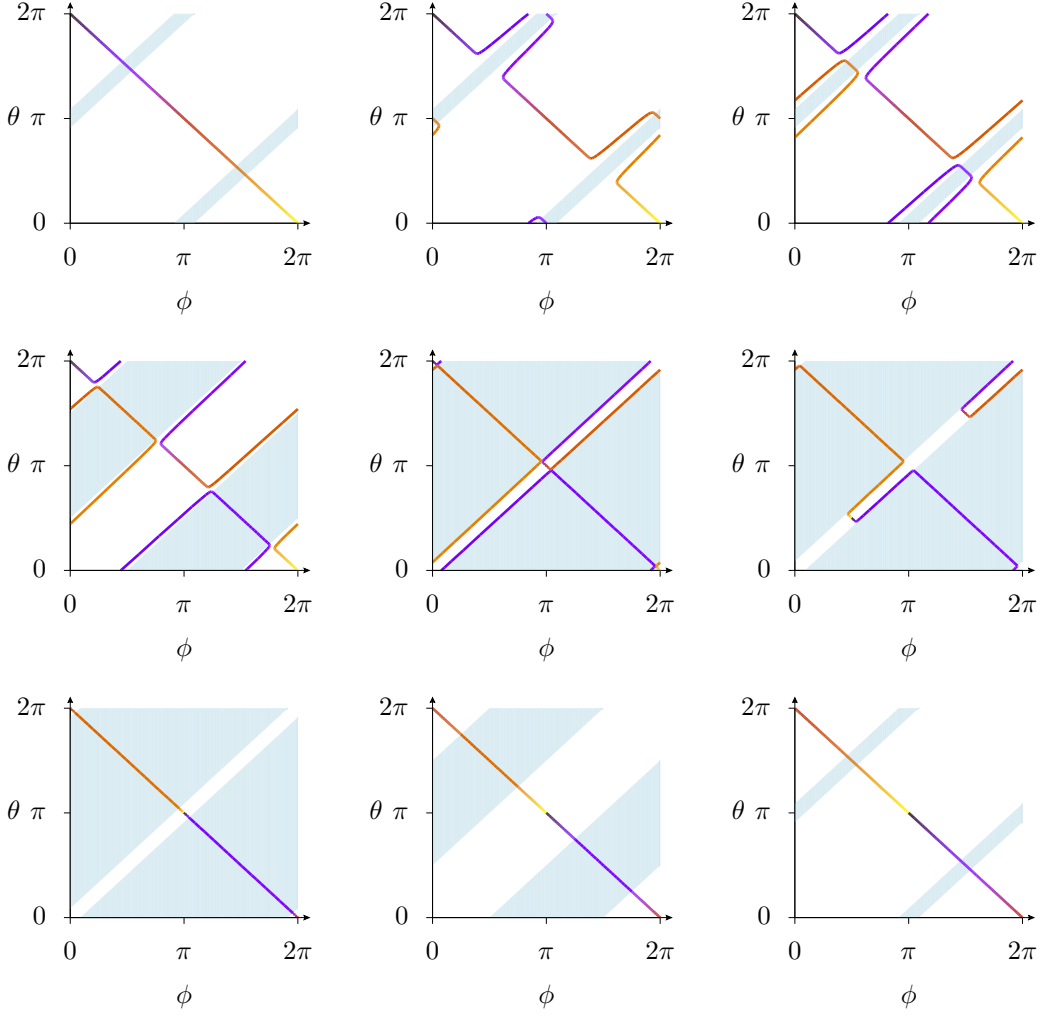

\centering
    \input{shockflow2t0}\hspace{-0.5em}
    \input{shockflow2t1}\hspace{-0.5em} 
    \input{shockflow2t2}  
    \input{shockflow2t3}\hspace{-0.5em}
    \input{shockflow2t4}\hspace{-0.5em}  
    \input{shockflow2t5}  
    \input{shockflow2t6}\hspace{-0.5em}  
    \input{shockflow2t7}\hspace{-0.5em}  
    \input{shockflow2t8}    
\caption{Particle trajectories for the flow $\varphi^n$ in \eqref{eq:compflow} for $n=1$ (in appropriate coordinates to determine double rotation on the torus). We use different colors to label particles and follow their motion. The stripes indicate the particles in between the peakons' peaks. In the limit, the trajectories of such particles lifted to the cone will start and end at the apex.}\label{fig:shockflow2d}
\end{figure}
 
\section{Discrete generalized solutions} \label{sec:discrete}

There are two main obstacles in translating problem \ref{prob:generalizedunb} to the discrete setting. On one hand, we need to make computations on an unbounded domain; on the other, we need to be able to single out a representative for the equivalence class of minimizers with respect to rescaling. However, if one is interested in simulating solutions that are not singular (see definition \ref{def:singular}), it is appropriate to enforce the strong coupling constraint in \eqref{eq:coupling} instead of \eqref{eq:couplingconstr}. Hence, if we substitute $\cone$ by $\cone_R$ for a fixed $R>1$ and use the strong coupling constraint in the generalized $H(\mr{div})$ geodesic problem, we obtain a modified formulation that is able to reproduce a particular class of solutions, which includes all deterministic solutions with bounded Jacobian. In this section we describe a numerical algorithm based on entropy regularization and Sinkhorn algorithm that solves such a modified formulation. Our scheme is based on similar methods for the incompressible Euler equations developed in \cite{nenna2016numerical,benamou2017generalized,benamou2015iterative}.    
We also provide some numerical results illustrating the behavior of generalized $H(\mr{div})$ geodesics.

\subsection{Discrete formulation}
We set $M=[0,1]^d$ and consider a uniform discretization with points $\{x_i\}_{i=1}^{N_x}$, and a discretization of the interval $(0,R]$ with points $\{r_i\}_{i=1}^{N_r}$ such that $r_j = 1$ for a fixed $j\in\{1,\ldots, N_r\}$. These induce a discretization of the cone with points $\{z_i\}_{i=1}^N$ where $N= N_x N_r$. Similarly, we also consider a uniform discretization $\{t_i \}_{i=1}^K$ of $[0,T]$. Generalized flows are then replaced by a coupling arrays $\bs{\mu} \in (\mb{R}_{\geq 0}^N)^{K}$. Note that we can incorporate the boundary condition $\lambda_0 =1$ by reducing the dimension of $\bs{\mu}$. In particular, we now denote by $\pi_x$ and $\pi_r$ the canonical projections from $M\times(0,R]$ to $M$ and $(0,R]$ respectively. We use the same notation to indicate the maps $\pi_x : \{1,\ldots,N\}\rightarrow \{1,\ldots,N_x\}$ and $\pi_r: \{1,\ldots,N\}\rightarrow \{1,\ldots,N_r\}$ mapping directly the discretization indices.  
Then, we set for any $\{j_1,\ldots, j_K\} \in \{1, \ldots, N \}^K$,
\begin{equation}\label{eq:initial}
\bs{\mu}_{j_1,\ldots,j_K} =  \mathds{1}_{\{\pi_r({z_{j_1}}) = 1\}}  \tilde{\bs{\mu}}_{\pi_x(j_1),j_2,\ldots,j_K} \,,
\end{equation}
where $\mathds{1}$ is the indicator function and $\tilde{\bs{\mu}}\in \mb{R}_{\geq 0}^{N_x}\times({\mb{R}_{\geq 0}^N})^{K-1}$. We denote by $\Pi_0$ the set of couplings satisfying \eqref{eq:initial}.
The marginal at a given time $t_k$ is a discrete measure on $M \times(0,R]$. We denote this by $S_k(\bs{\mu}) \in \mb{R}_{\geq 0}^{N}$, and it is defined as follows: 
\begin{equation}
[S_k(\bs{\mu})]_{j} = \sum_{j_1, \ldots, j_{k-1},j_{k+1}, \ldots, j_{K}} \bs{\mu}_{j_1, \ldots, j_{k-1},j,j_{k+1}, \ldots, j_{K}} \,.
\end{equation}
We denote by $M_n: \mb{R}_{\geq 0}^{N} \rightarrow \mb{R}_{\geq 0}^{N_x}$ the $n$th moment taken in the radial direction, i.e.\
\begin{equation}
M_n[A]_i = \sum_{j, \pi_x(j) = i} \pi_r(z_j)^n A_{j}\,.
\end{equation}
Hence the constraint in \eqref{eq:marginals} becomes
\begin{equation}
M_2[S_k(\bs{\mu})]_i = 1/N_x\,.
\end{equation}
Moreover, we denote by $\Pi$ the set of admissible coupling arrays,
\begin{equation}
\begin{aligned}
\Pi = \{ \bs{\mu} \in \Pi_0 ;\, \forall i,\, M_2[S_k(\bs{\mu})]_i = 1/N_x \}\,.
\end{aligned}
\end{equation}
The constraint on the coupling between time 0 and $T$ can be enforced weakly by including it directly in the cost, which is given by the following array
\begin{equation}
C_{j_1,\ldots,j_K} = \frac{K-1}{T} \sum_{k=1}^{K-1}d_\cone(z_{j_k},z_{j_{k+1}})^2 + \alpha\, d_\cone(z_{j_K},(\fmap(\pi_x(z_{j_1})), \sqrt{|\mr{Jac}(\fmap)|}))^2\,,
\end{equation}
where $\alpha>0$ is a parameter. The regularized discrete problem is then,
\begin{equation}\label{eq:minimization}
\min_{\bs{\mu} \in \Pi}\, \langle C,\bs{\mu} \rangle  - \epsilon E(\bs{\mu})\,,
\end{equation}
where $\epsilon>0$ is another parameter and $E(\bs{\mu})$ is the entropy of the coupling defined by
\begin{equation}
E(\bs{\mu}) = -\langle\bs{\mu}, \log(\bs{\mu}) -1\rangle\,. 
\end{equation}

Problem \eqref{eq:minimization} can be solved by means of alternating projections which consist in enforcing recursively the marginal constraints at the different time levels. In particular, we consider the following augmented functional
\begin{equation}\label{eq:augmented}
\min_{\bs{\mu}}\, \langle C,\bs{\mu} \rangle  - \epsilon E(\bs{\mu}) - \sum_{i,k} p^k_i (M_2[S_k (\bs{\mu})]_i - 1/N_x)\,,
\end{equation} 
where $p^k\in \mb{R}^{N_x}$ for all $k\in\{1,\ldots,K\}$. From \eqref{eq:augmented} we obtain
\begin{equation}
\bs{\mu}_{j_1,\ldots,j_K} = e^{-\frac{C_{j_1,\ldots,j_K}}{\epsilon}} e^{ \sum_k p^k_{\pi_x(j_k)}   r_{\pi_r(j_k)}^2}\,.
\end{equation}
Enforcing the constraint at time level $n$ allows us to solve for $p^n$ given the set $\{p^k\}_{k\neq n}$. This amounts to solving the following nonlinear equation for all $i\in\{1,\dots, N_x\}$,
\begin{equation}
\sum_j B_{i,j} e^{ p^n_{i}   r_{j}^2} r_j^2 = 1/N_x\,	,
\end{equation}
where
\begin{equation}
B = S_n \left[ e^{-\frac{C_{j_1,\ldots,j_K}}{\epsilon}} e^{ \sum_{k,k\neq n} p^k_{\pi_x(j_k)}   r_{\pi_r(j_k)}^2}      \right ]\,.
\end{equation}
Due to the structure of the cost, we only need to store two arrays $D^0,D^1\in \mb{R}^N\times \mb{R}^N$, given by
\begin{equation}
D^0_{i,j} = d_\cone(z_{i},z_{j})^2\,, \qquad D^1_{i,j} = d_\cone(z_{i},(\fmap(\pi_x(z_{j})), \sqrt{|\mr{Jac}(\fmap)}|))^2\,.
\end{equation}

\subsection{Numerical results: from CH to Euler} \label{subsec:numerical}

We now present some numerical results illustrating the behavior of generalized solutions of the $H(\mr{div})$ geodesic problem and their relation to generalized incompressible Euler solutions. We consider two types of couplings to define the boundary conditions: a classical deterministic coupling, which we use to illustrate the emergence of discontinuities in the flow map, and a generalized coupling that obliges particles to cross each other so that the solution is not deterministic. For both cases, the domain will be the one-dimensional interval $M=[0,1]$ and $T=1$.
\\

\noindent \emph{A peakon-like solution}. Consider the continuous map $h:[0,1] \rightarrow [0,1]$, defined by 
\begin{equation}\label{eq:bcpeakon}
h(x) = \left \{
\begin{array}{ll}
1.4 \,x & \text{if } x\leq 0.5 \,,\\
0.6 \,x + 0.4 & \text{if } x> 0.5 \,.
\end{array}
 \right.
\end{equation} 
We use this map to define the coupling on the cone as in equation \eqref{eq:coupling}. We compute the solution using the algorithm presented in the previous section with $N_x=40$, $N_r =41$, $0.55\leq r \leq 1.45$, $K=35$, $\alpha=40$, $\epsilon = 5\cdot 10^{-4}$. In figure \ref{fig:peakon1} we show the evolution of the transport plan on the domain $M$ given by $(e^M_{0,t_k})_\#\bs{\mu} \in \mc{P}(M^2)$, where $e^M_{0,t_k}(\mr{z}) \coloneqq (x_0,x_{t_k})$, for selected times. In figure \ref{fig:peakon2} we show the evolution of the marginals on the cone given by $(e_{t_k})_\#\bs{\mu} \in \mc{P}(\cone)$ for the same times. 
We remark that the dynamic plan is approximately deterministic since there is very little diffusion of the mass in the domain, which is at least partially due to the entropic regularization. In addition the discontinuity in the Jacobian of the coupling map propagates to the whole solution, which resembles a peakon with the discontinuity point corresponding to the peak of the peakon. 
\\

\begin{figure}[htbp]
\centering
\subfigure[$k=1$]{\includegraphics[width=0.23\textwidth]{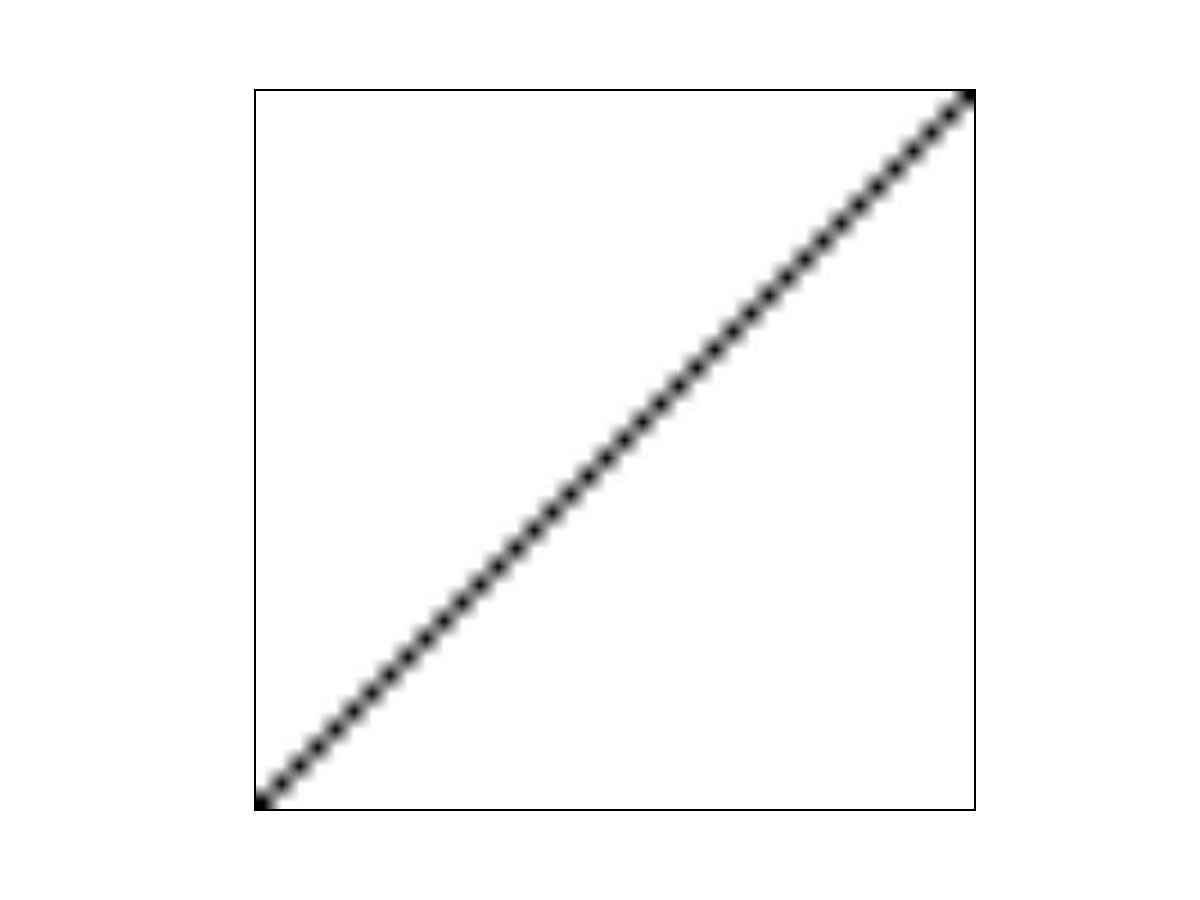}}
\subfigure[$k=6$]{\includegraphics[width=0.23\textwidth]{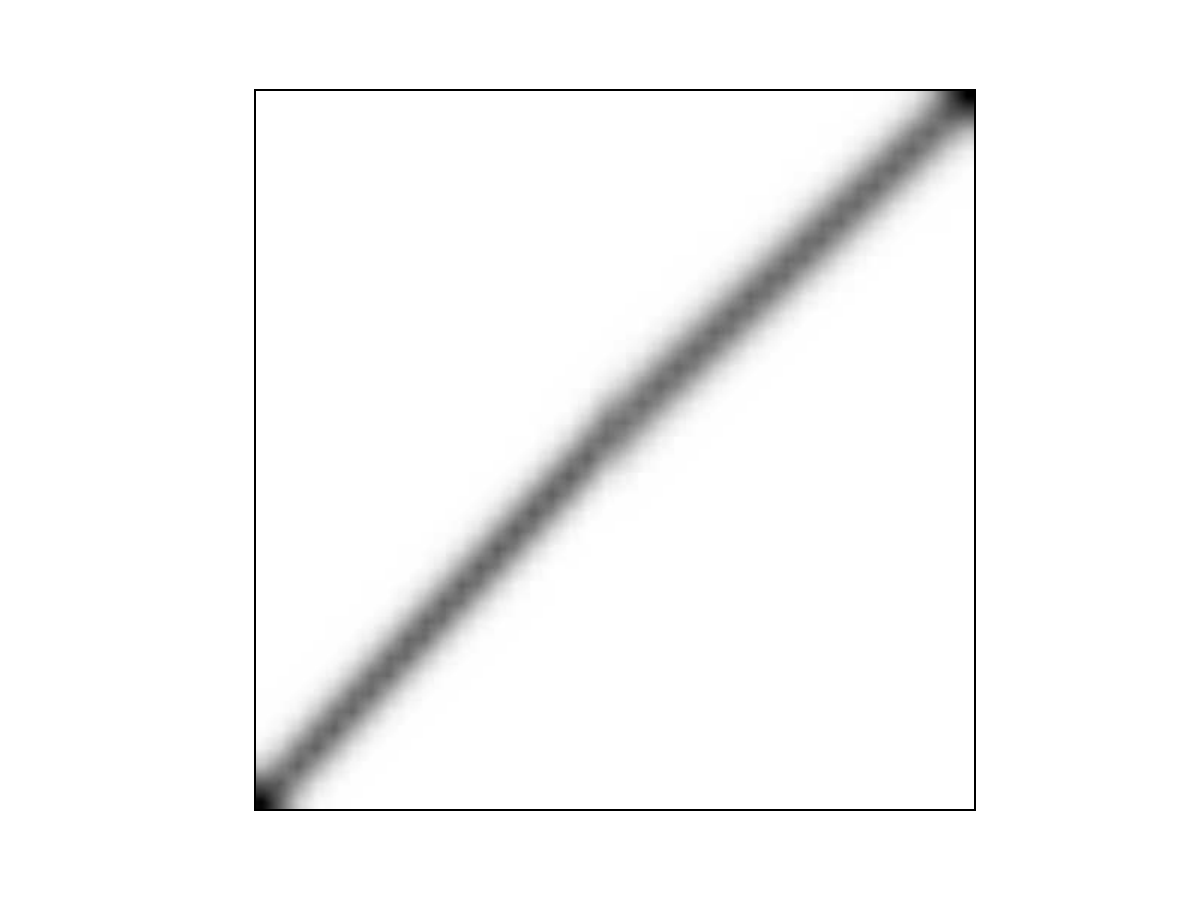}}
\subfigure[$k=11$]{\includegraphics[width=0.23\textwidth]{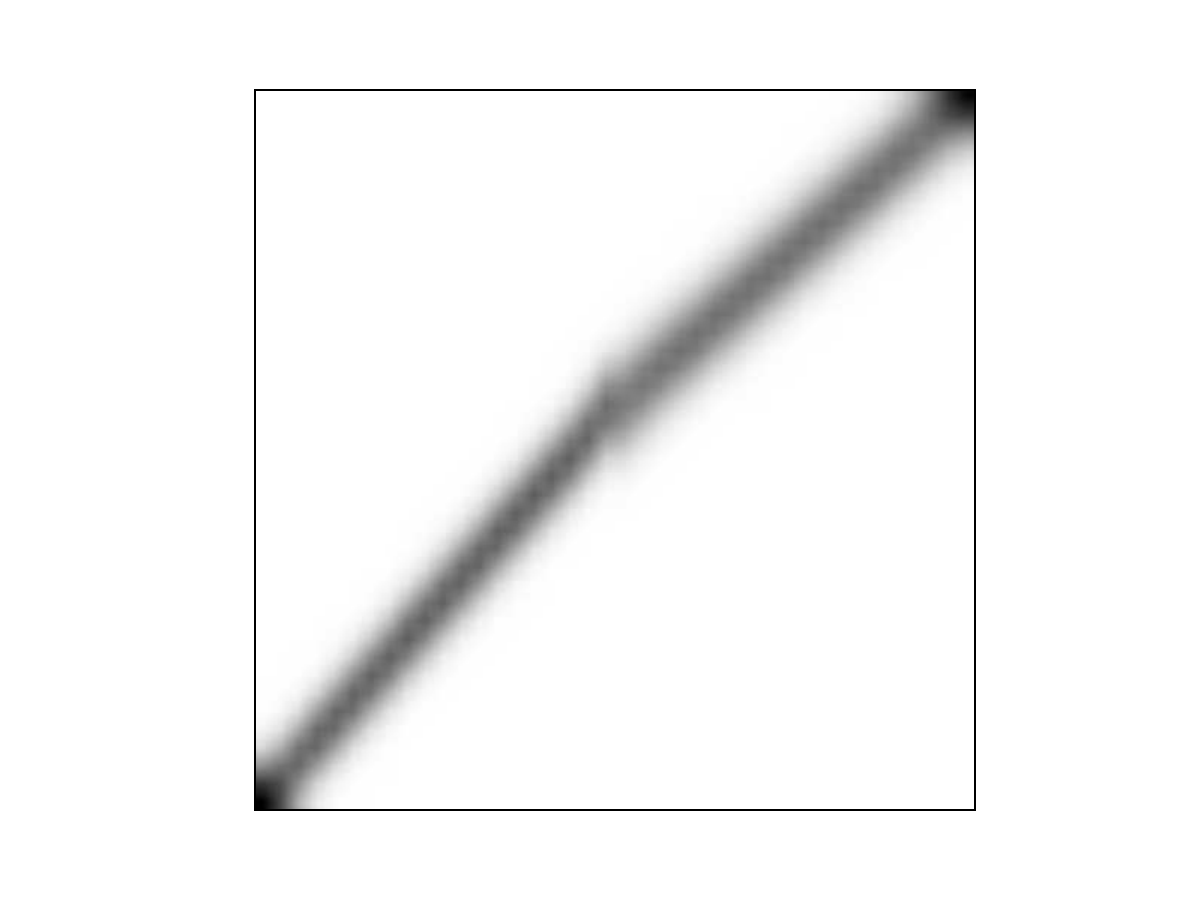}}
\subfigure[$k=16$]{\includegraphics[width=0.23\textwidth]{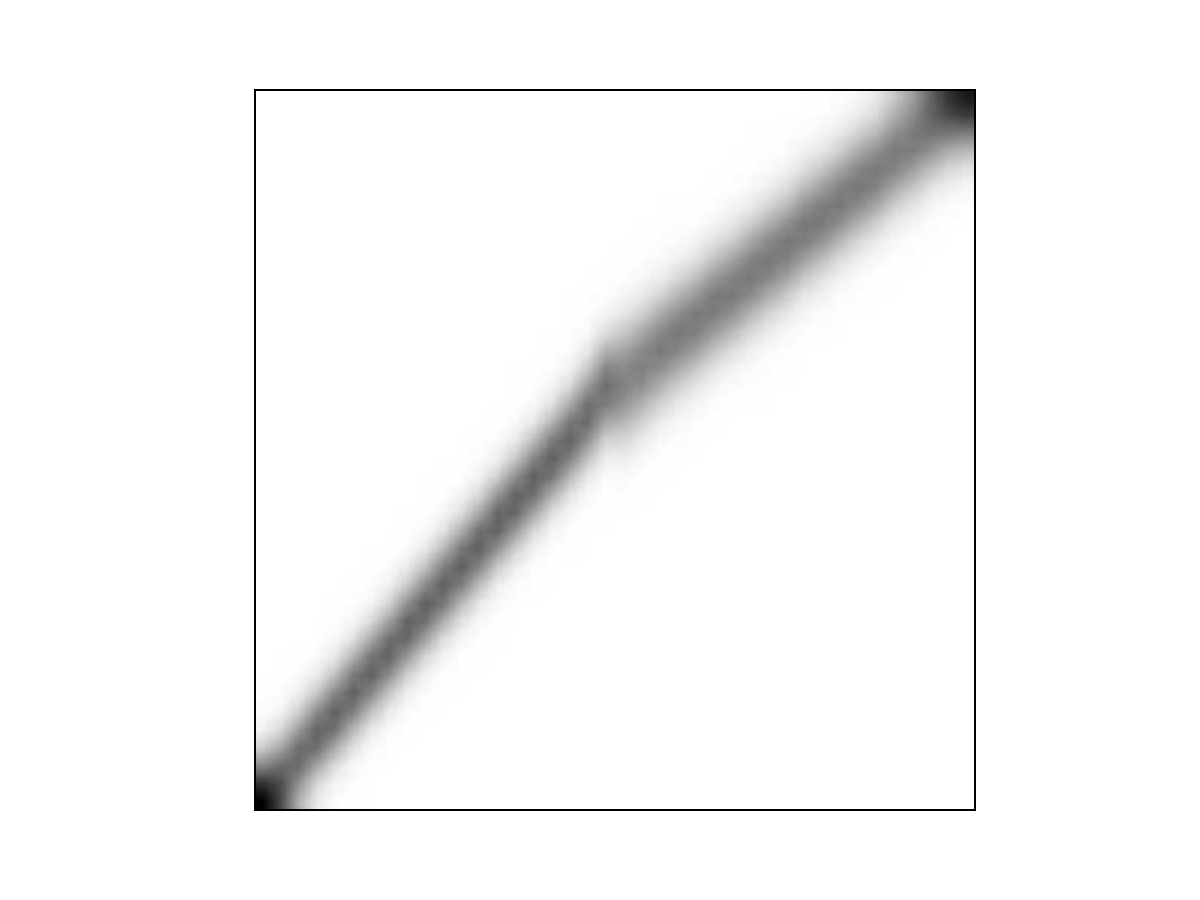}}
\subfigure[$k=20$]{\includegraphics[width=0.23\textwidth]{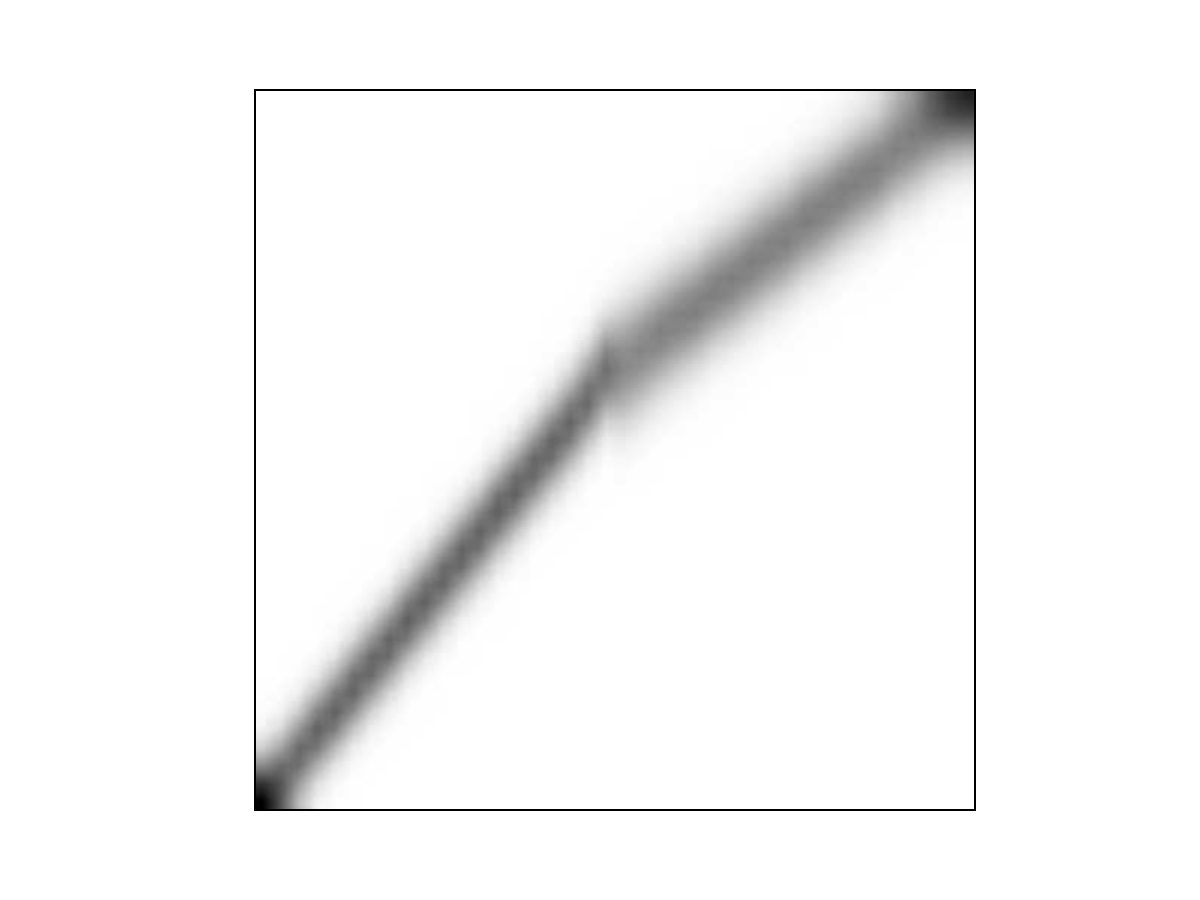}}
\subfigure[$k=25$]{\includegraphics[width=0.23\textwidth]{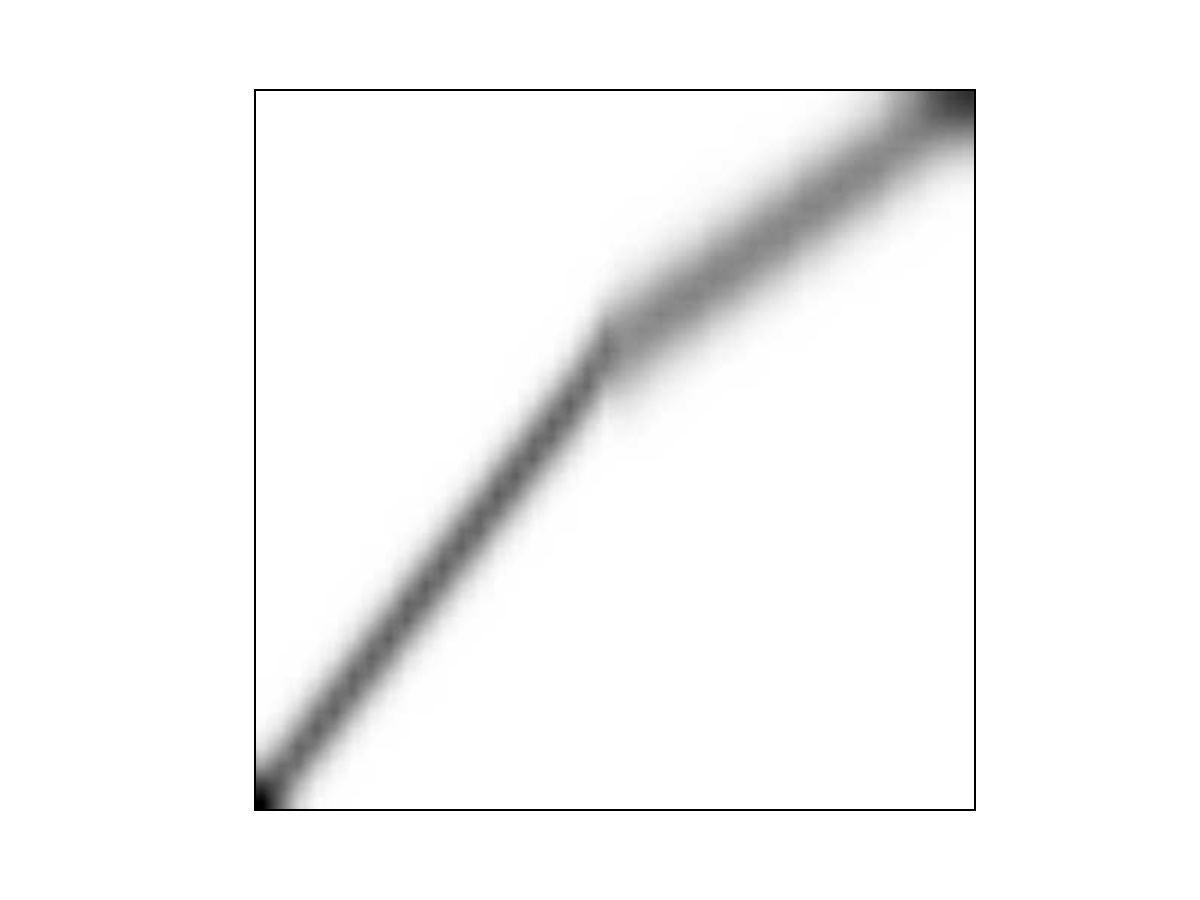}}
\subfigure[$k=30$]{\includegraphics[width=0.23\textwidth]{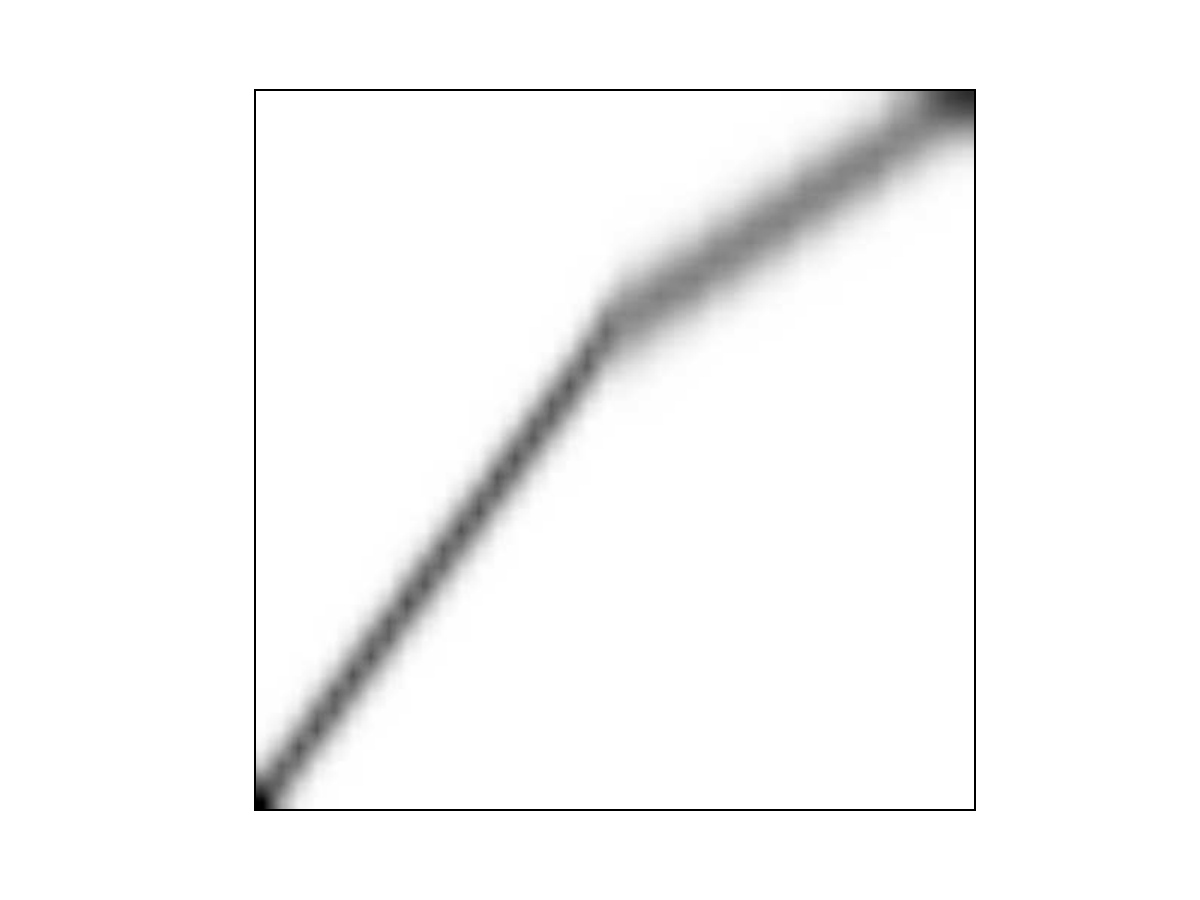}}
\subfigure[$k=35$]{\includegraphics[width=0.23\textwidth]{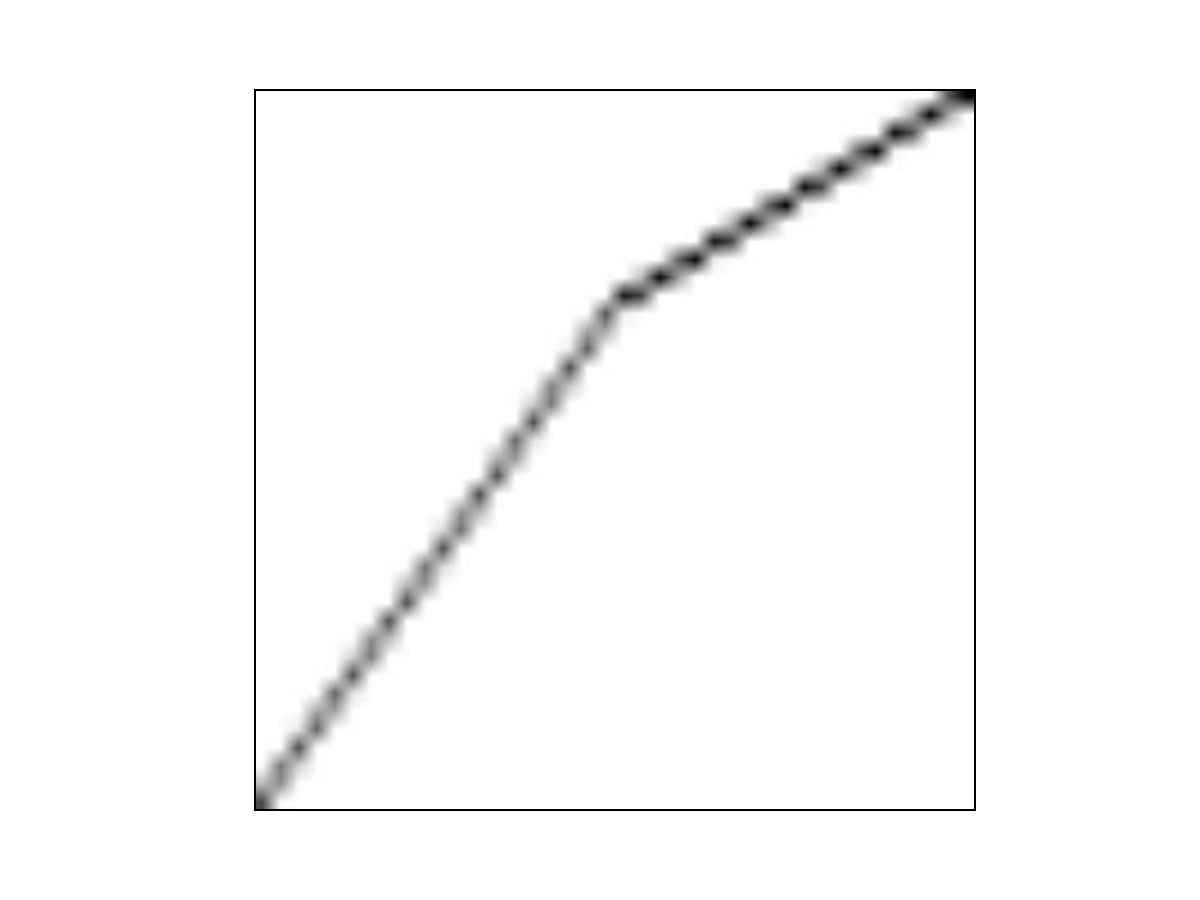}}
\caption{Transport couplings $(e^M_{0,t_k})_\#\bs{\mu}$ on $M\times M$ for the peakon-like solution associated with the boundary conditions specified by the map in equation \eqref{eq:bcpeakon}.} \label{fig:peakon1}
\end{figure}

\begin{figure}[htbp]
\centering
\subfigure[$k=1$]{\includegraphics[width=0.23\textwidth]{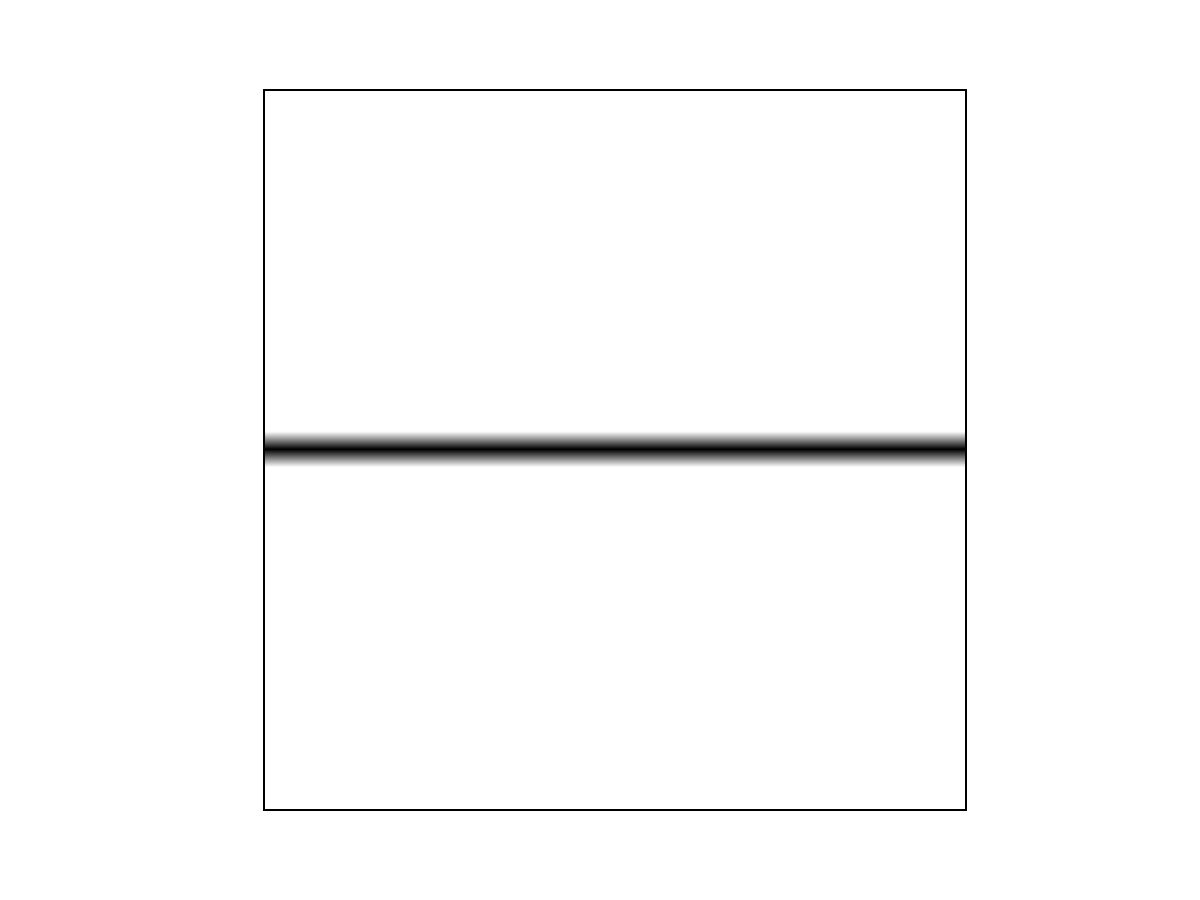}}
\subfigure[$k=6$]{\includegraphics[width=0.23\textwidth]{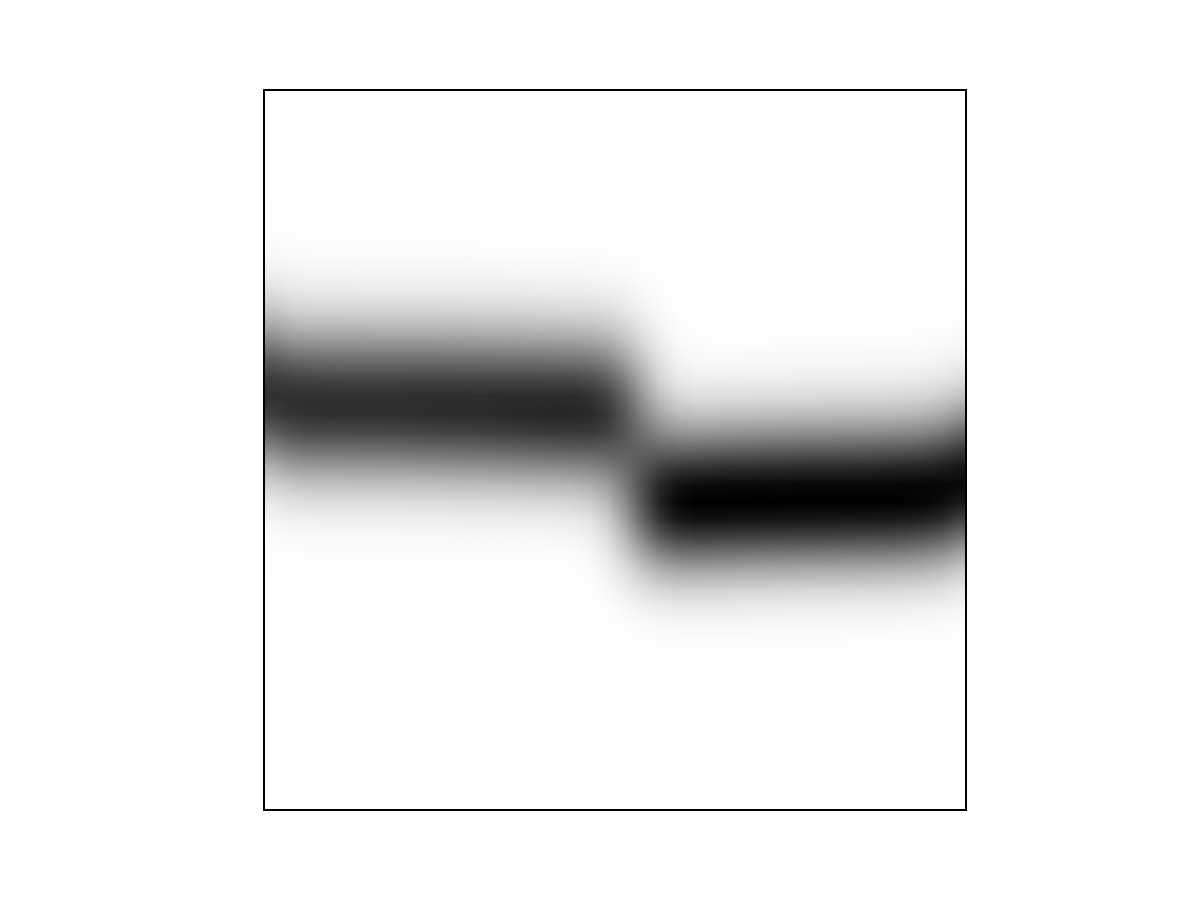}}
\subfigure[$k=11$]{\includegraphics[width=0.23\textwidth]{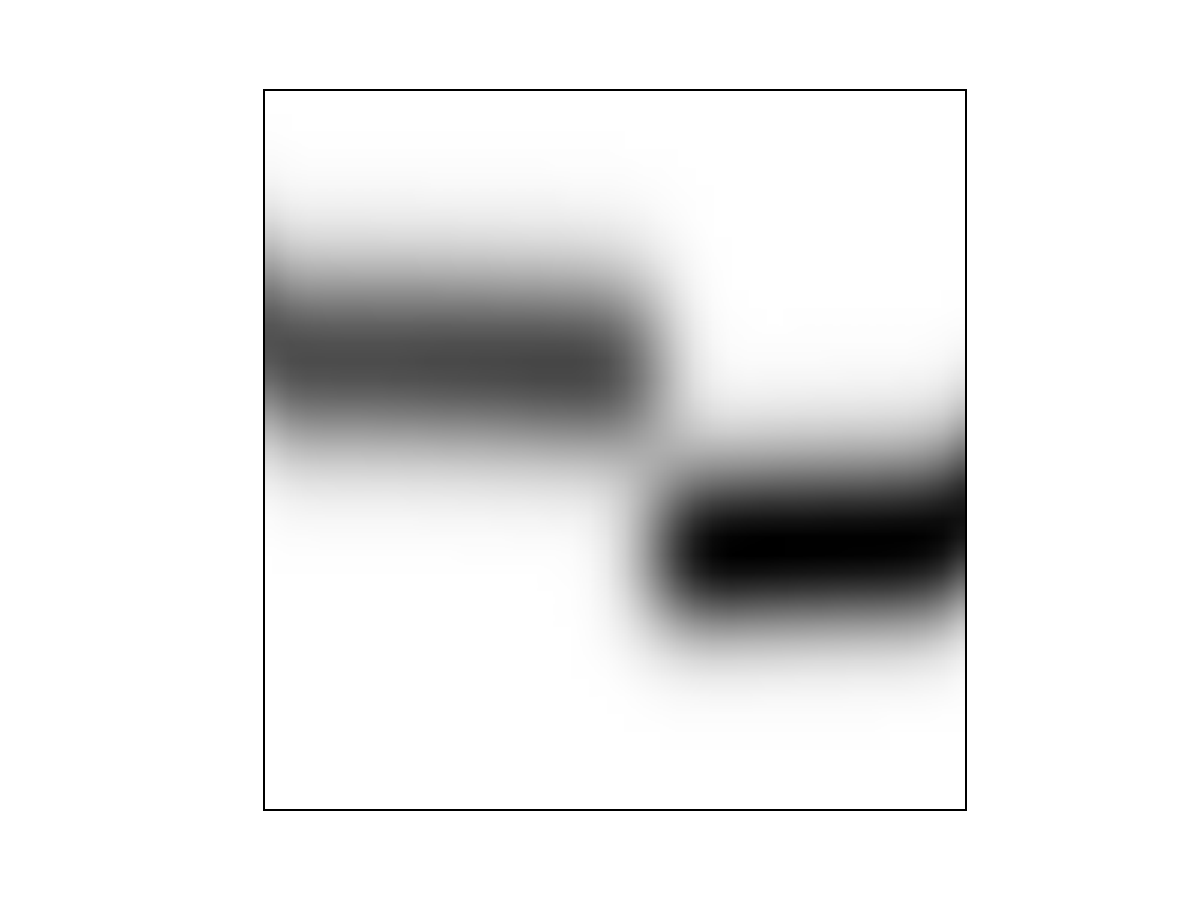}}
\subfigure[$k=16$]{\includegraphics[width=0.23\textwidth]{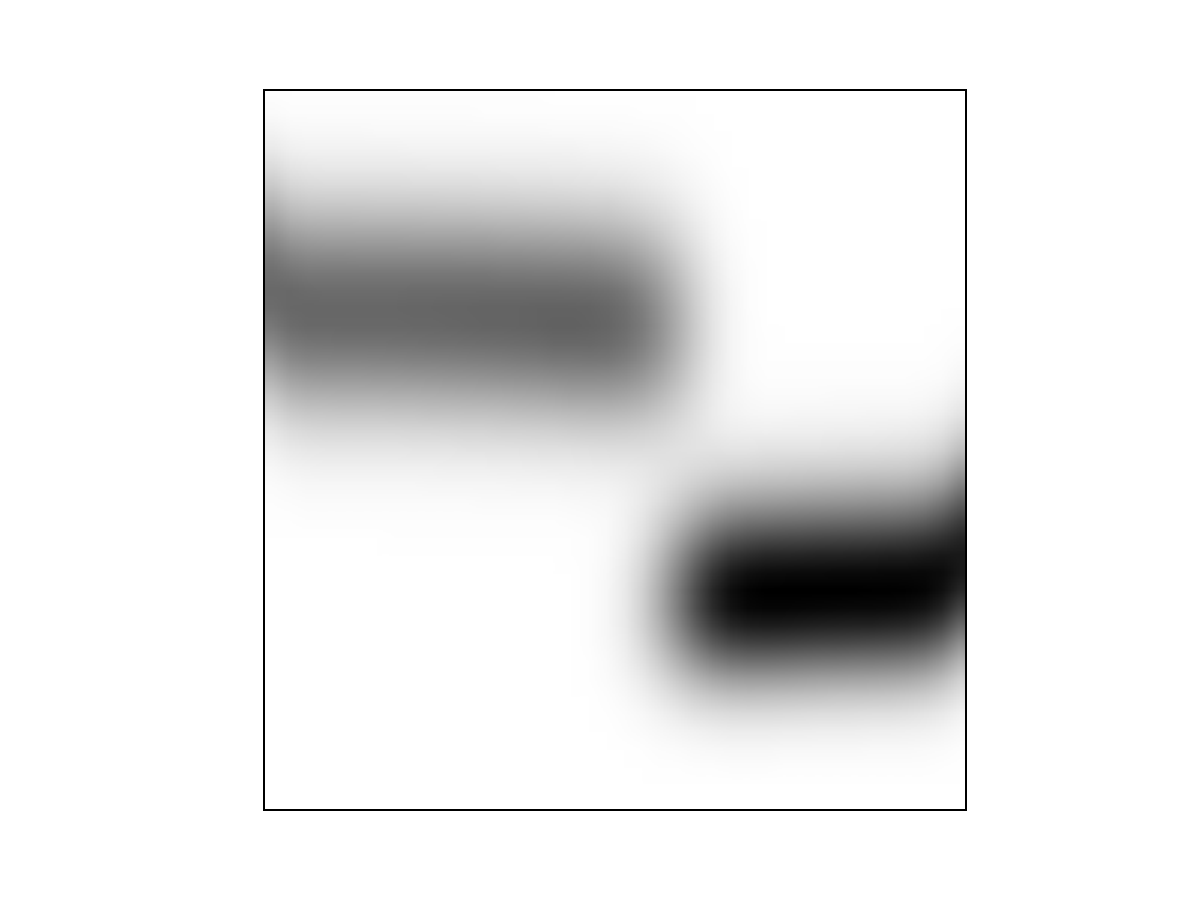}}
\subfigure[$k=20$]{\includegraphics[width=0.23\textwidth]{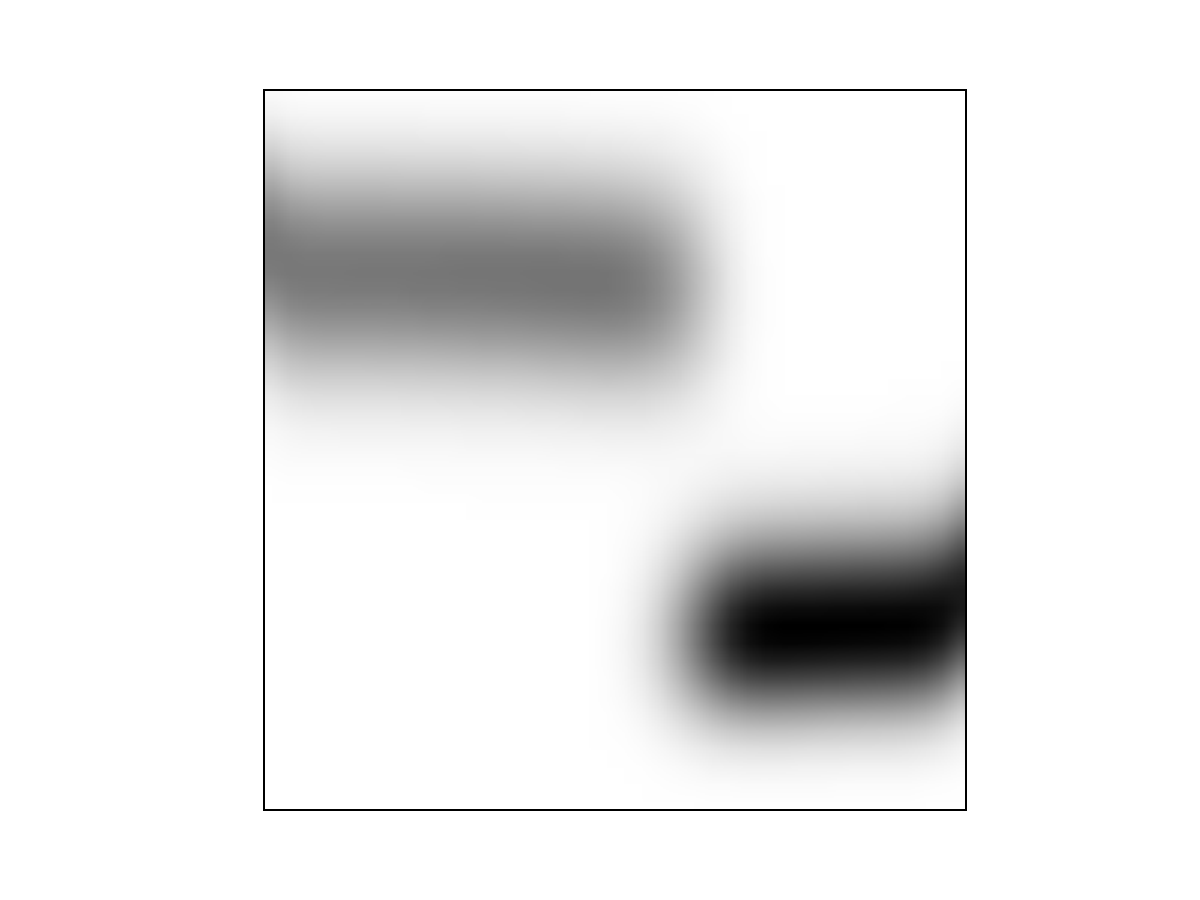}}
\subfigure[$k=25$]{\includegraphics[width=0.23\textwidth]{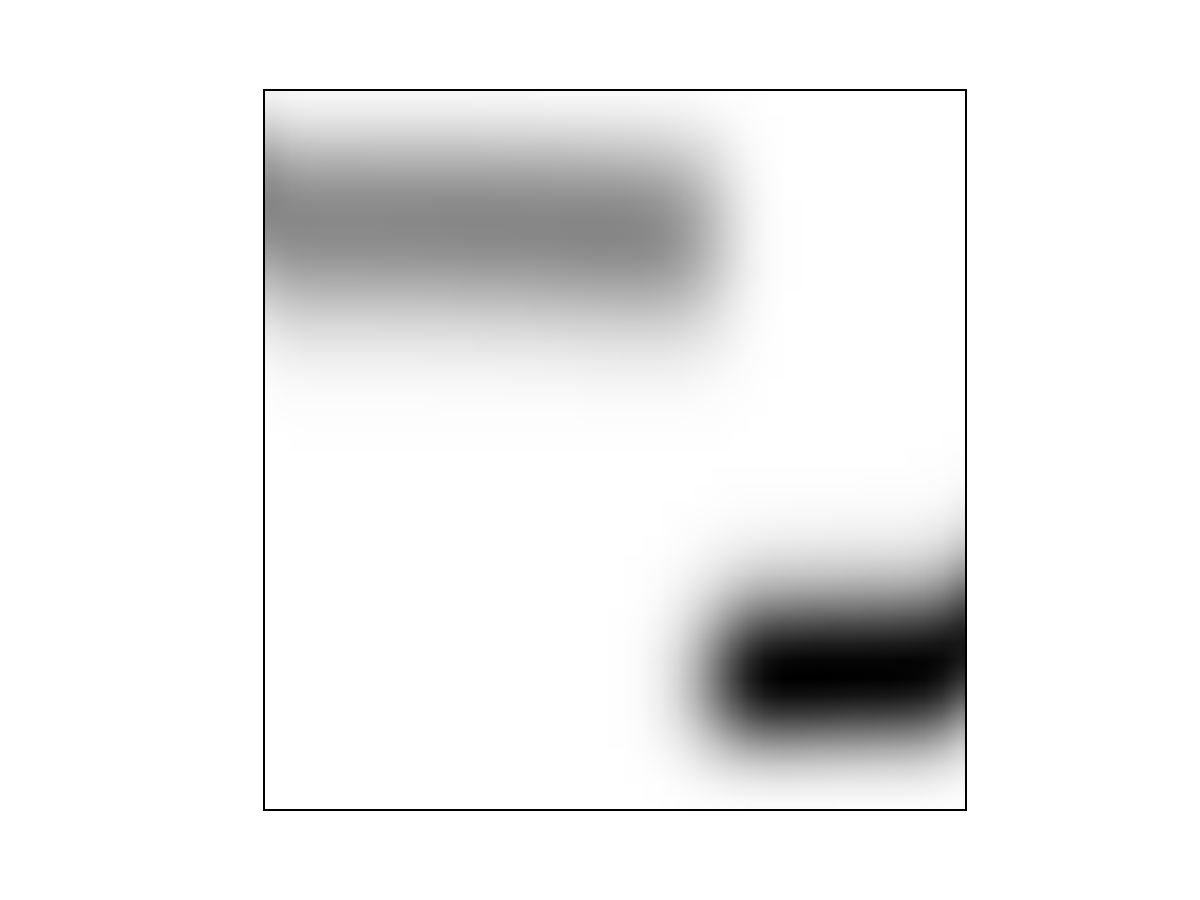}}
\subfigure[$k=30$]{\includegraphics[width=0.23\textwidth]{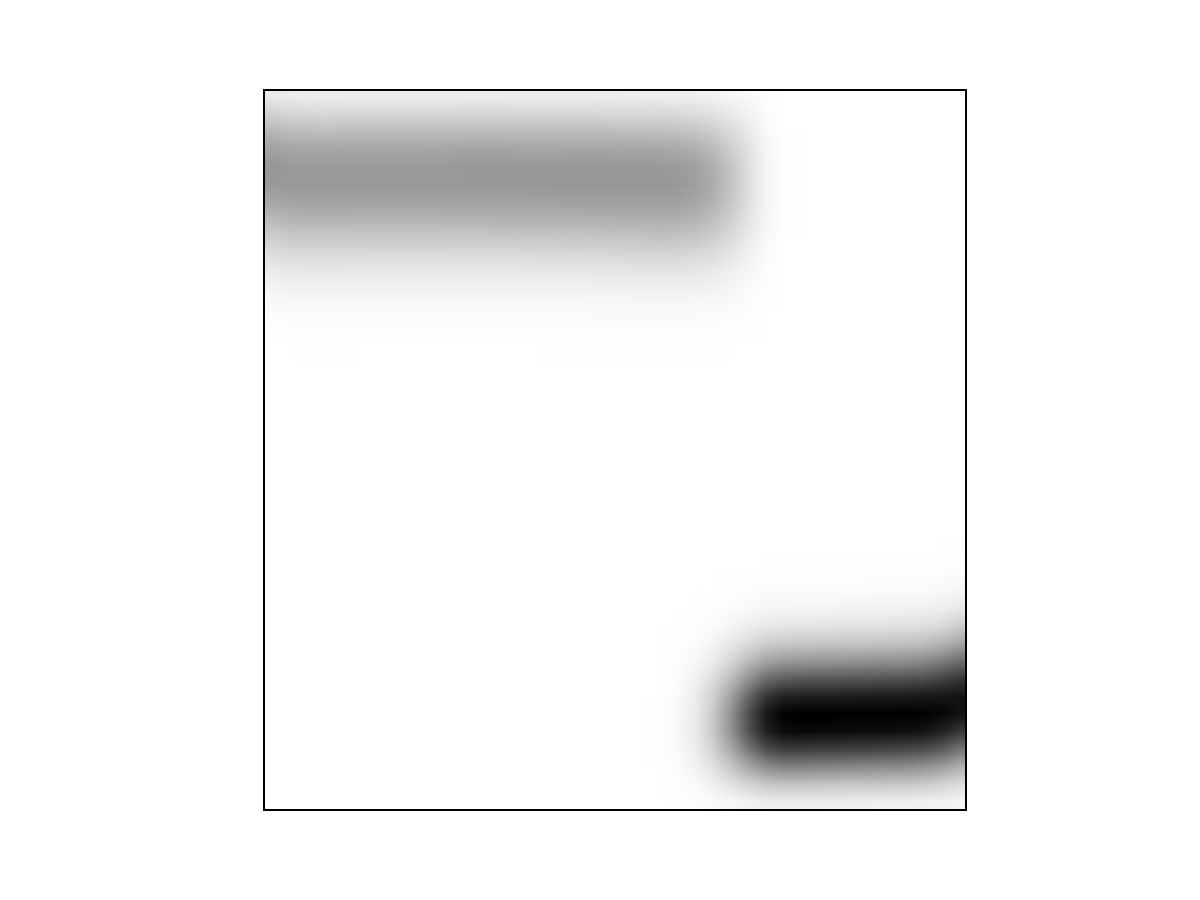}}
\subfigure[$k=35$]{\includegraphics[width=0.23\textwidth]{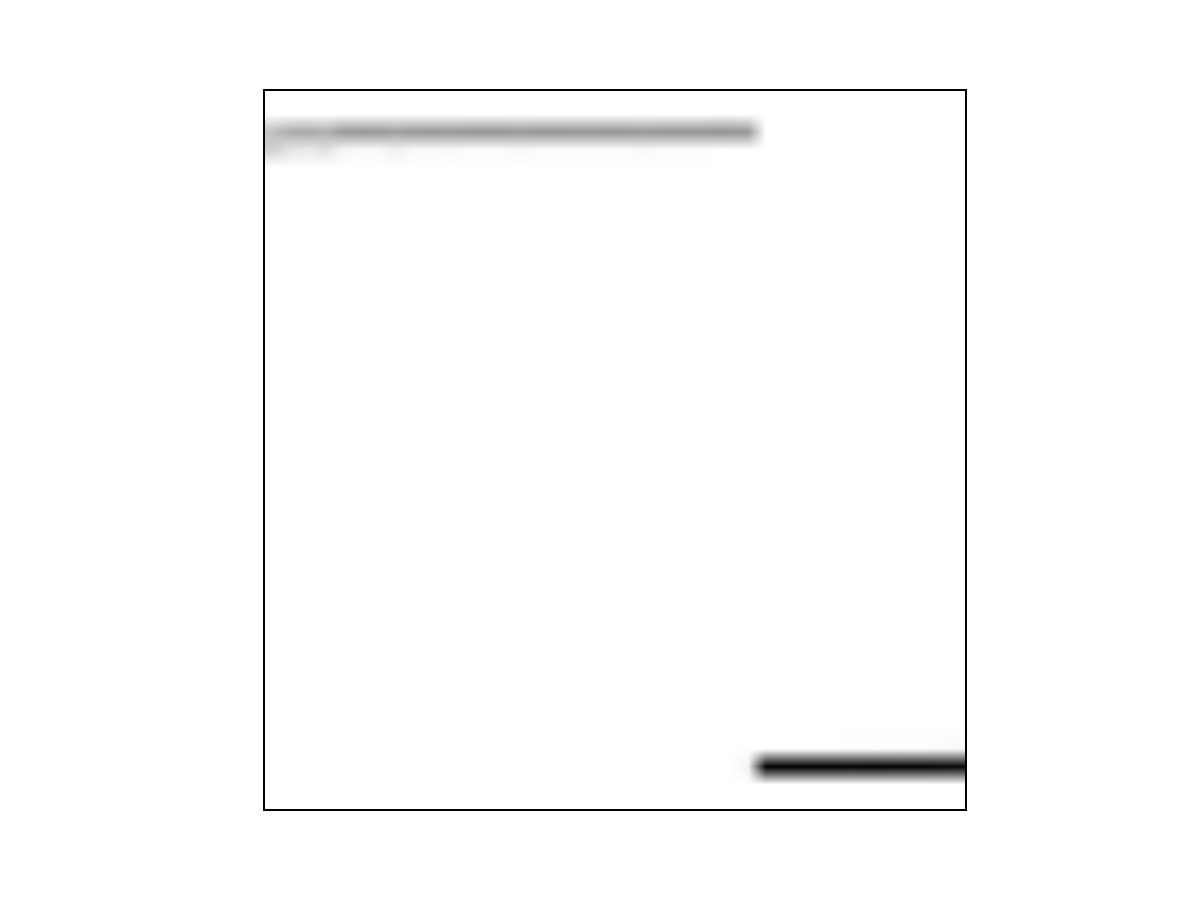}}
\caption{Fixed time marginals $(e_{t_k})_\# \bs{\mu}$ on the cone section $M\times [r_{min}, r_{max}]$ ($r_{min}=0.55$,  $r_{max}= 1.45$) for the peakon-like solution associated with the boundary conditions specified by the map  in equation \eqref{eq:bcpeakon}.} \label{fig:peakon2}
\end{figure}

\noindent \emph{A non-deterministic solution}.
The homogeneous marginal constraint allows us to consider very general couplings even defined by non-injective maps or maps that do not preserve the local orientation of the domain. Measure-preserving maps provide a special example since these were used by Brenier to define boundary conditions for generalized incompressible Euler flows.
In fact if $h$ is measure-preserving, i.e.\ $h_\# \rho_0 = \rho_0$, then we can use as coupling
\begin{equation}
\bs{\gamma} = [(\mr{Id},1),(h,1)]_\# \rho_0\,.
\end{equation}
Here, we take $h:[0,1] \rightarrow [0,1]$ to be the map 
\begin{equation}\label{eq:bcnondet}
h(x) = 1-x\,,
\end{equation} 
which can only be realized by a non-deterministic plan.  We compute the discrete solution associated with such boundary conditions with $N_x=40$, $N_r =41$, $0.6\leq r \leq 1.4$, $K=35$, $\alpha=40$, $\epsilon = 5\cdot 10^{-4}$. As before, we show the evolution of the transport plan on the domain $M$ given by $(e^M_{0,t_k})_\#\bs{\mu} \in \mc{P}(M^2)$ in figure \ref{fig:nondet1}. In figure \ref{fig:nondet2} we show the evolution of the marginals on the cone given by $(e_{t_k})_\#\bs{\mu} \in \mc{P}(\cone)$.  The transport plan evolution is remarkably similar to that of the incompressible Euler equation for the same coupling (see, e.g., \cite{benamou2017generalized}). However, the two do not coincide as it is evident from the marginals on the cone in figure \ref{fig:nondet2}. In the case of incompressible Euler, these marginals are concentrated on  $r = 1$ for every time, i.e.\ the transport plan remains measure-preserving during the evolution. This is clearly not the case for the generalized CH solution, for which also the Jacobian appears to be non-deterministic.

\begin{figure}[htbp]
\centering
\subfigure[$k=1$]{\includegraphics[width=0.23\textwidth]{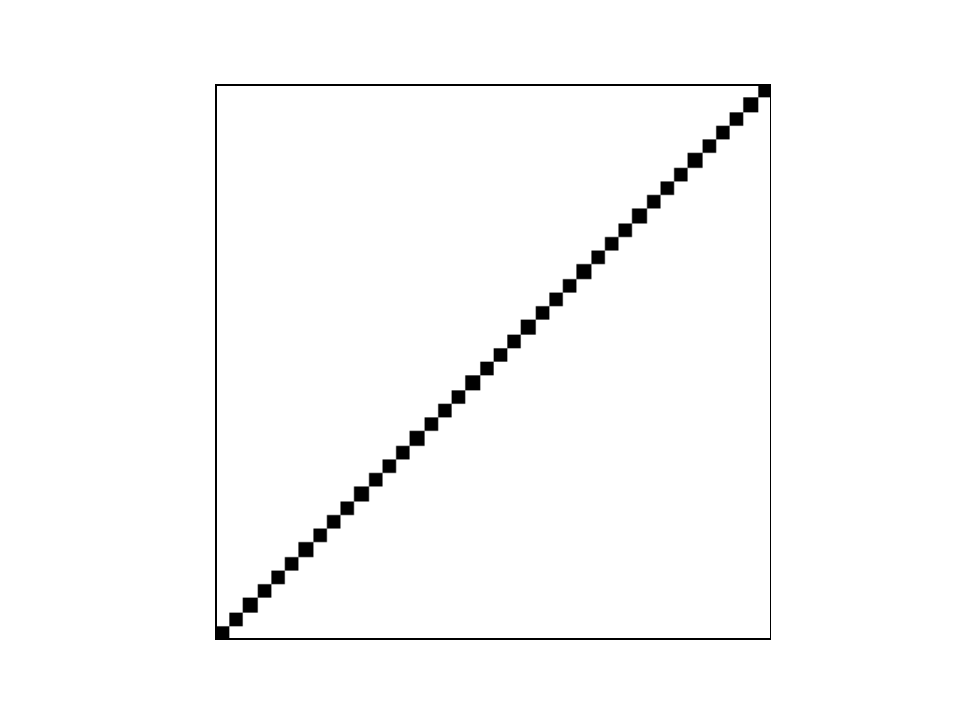}}
\subfigure[$k=6$]{\includegraphics[width=0.23\textwidth]{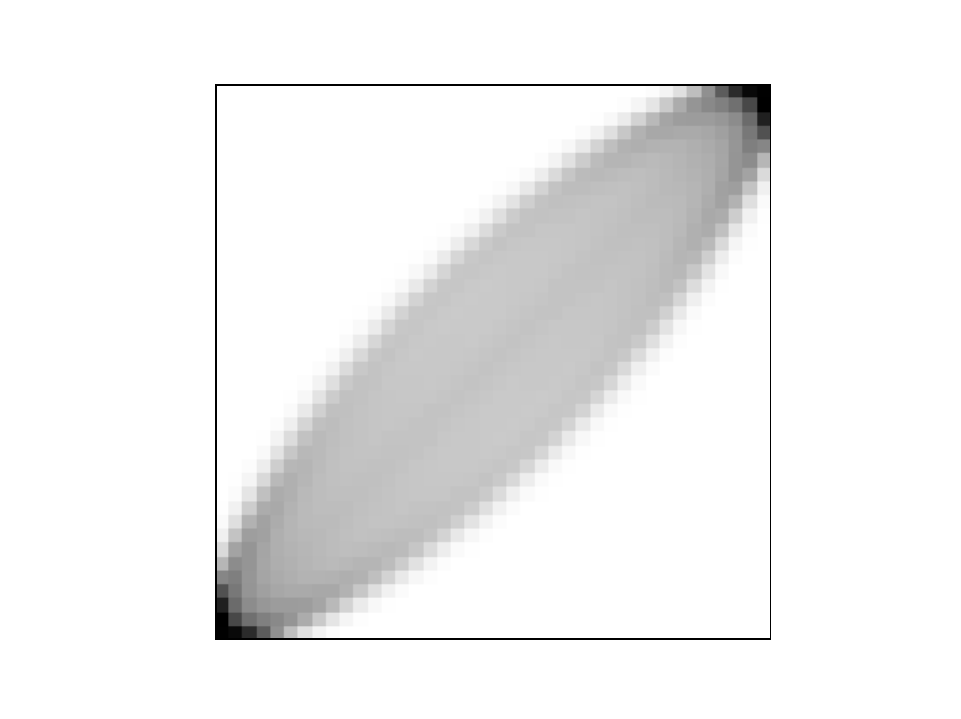}}
\subfigure[$k=11$]{\includegraphics[width=0.23\textwidth]{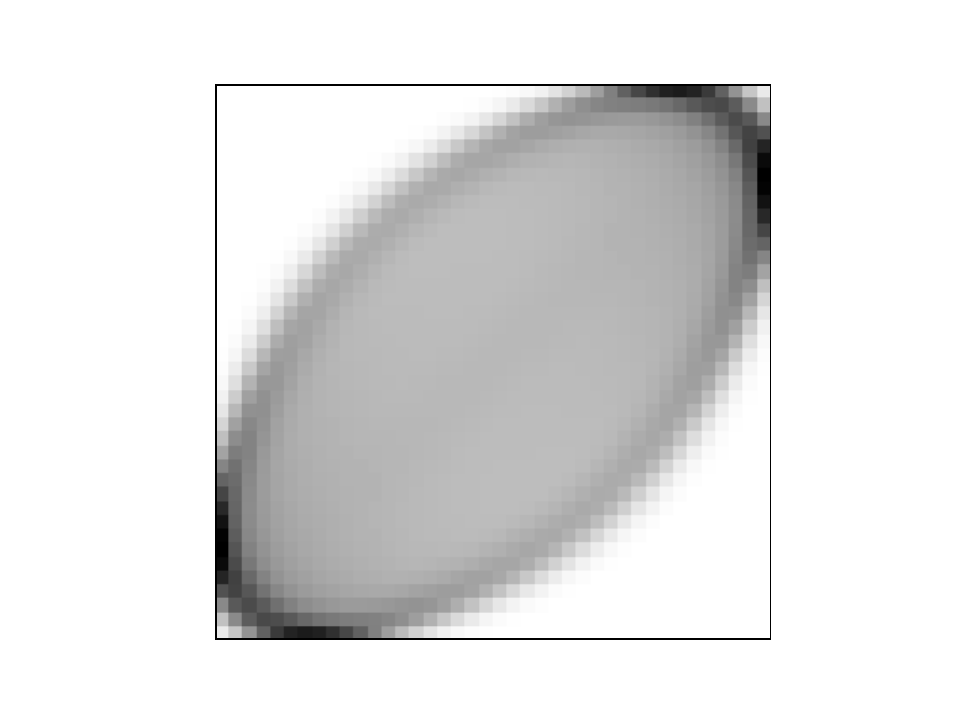}}
\subfigure[$k=16$]{\includegraphics[width=0.23\textwidth]{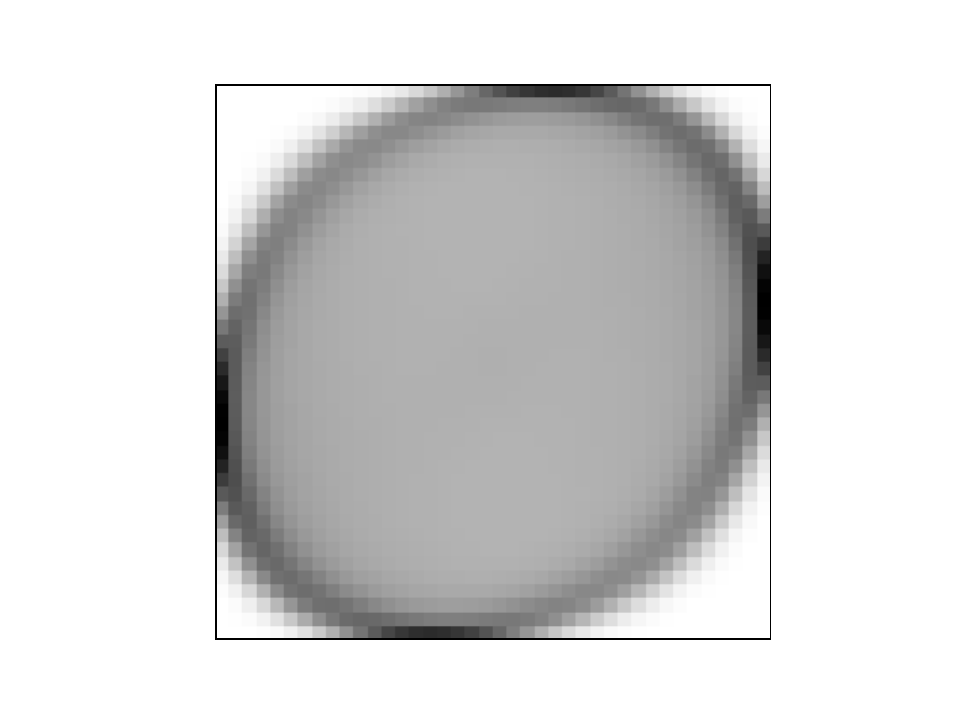}}
\subfigure[$k=20$]{\includegraphics[width=0.23\textwidth]{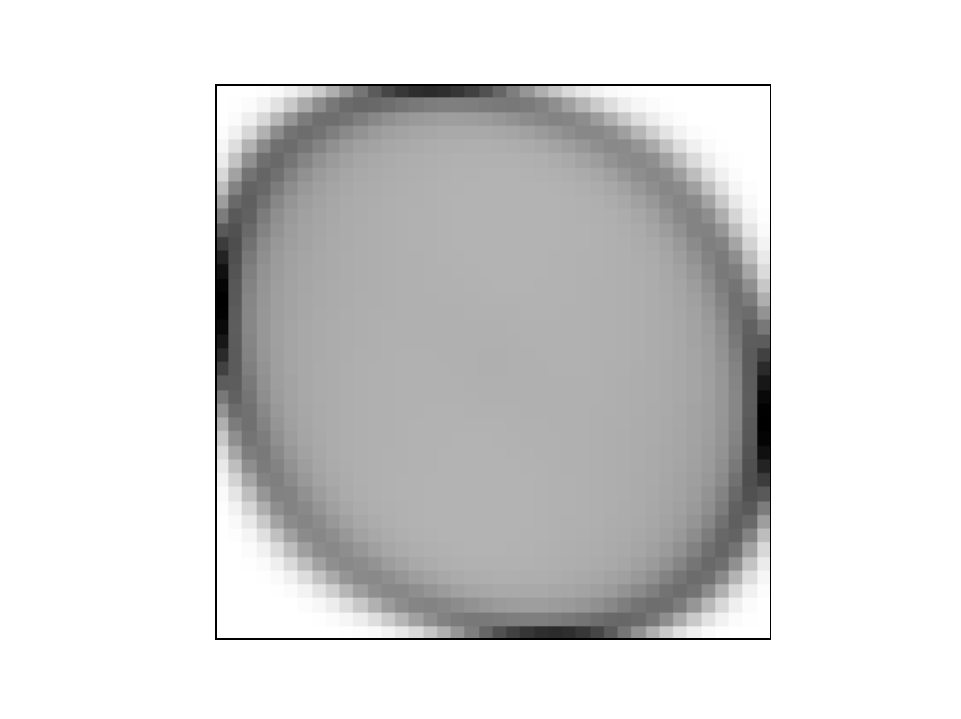}}
\subfigure[$k=25$]{\includegraphics[width=0.23\textwidth]{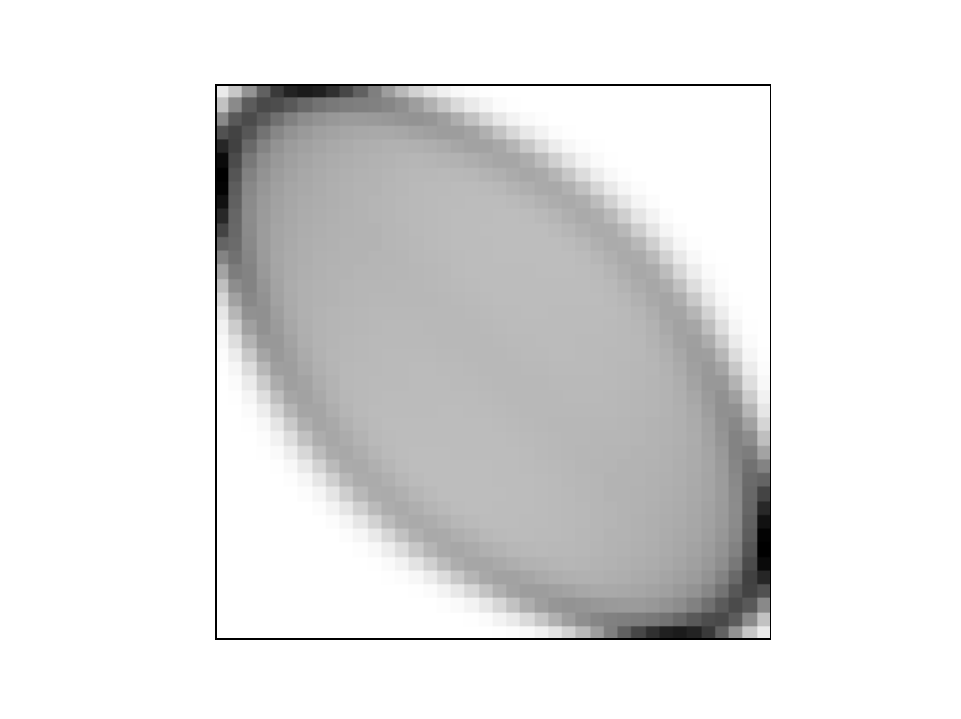}}
\subfigure[$k=30$]{\includegraphics[width=0.23\textwidth]{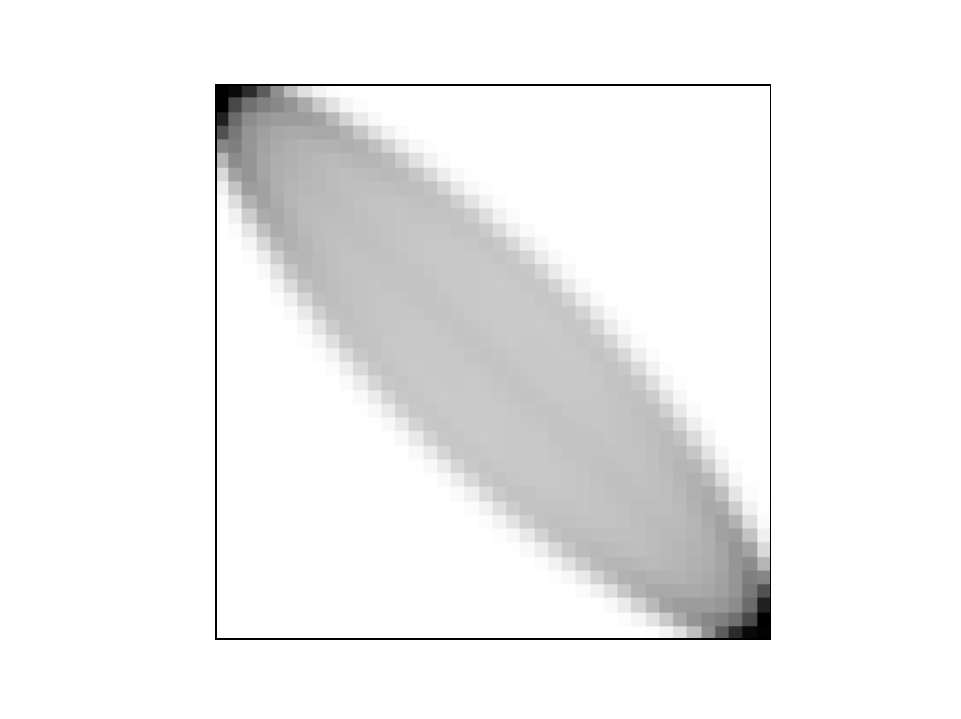}}
\subfigure[$k=35$]{\includegraphics[width=0.23\textwidth]{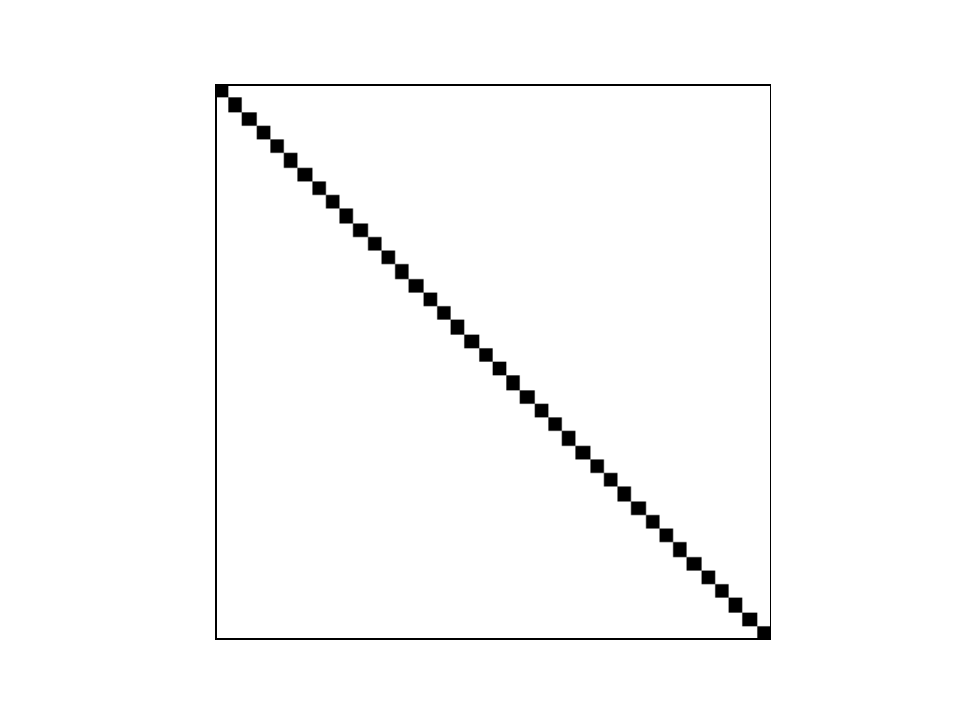}}
\caption{Transport couplings $(e^M_{0,t_k})_\#\bs{\mu}$ on $M\times M$  for the non-deterministic solution associated  to the boundary conditions specified by the map  in equation \eqref{eq:bcnondet}.} \label{fig:nondet1}
\end{figure}

\begin{figure}[htbp]
\centering
\subfigure[$k=1$]{\includegraphics[width=0.23\textwidth]{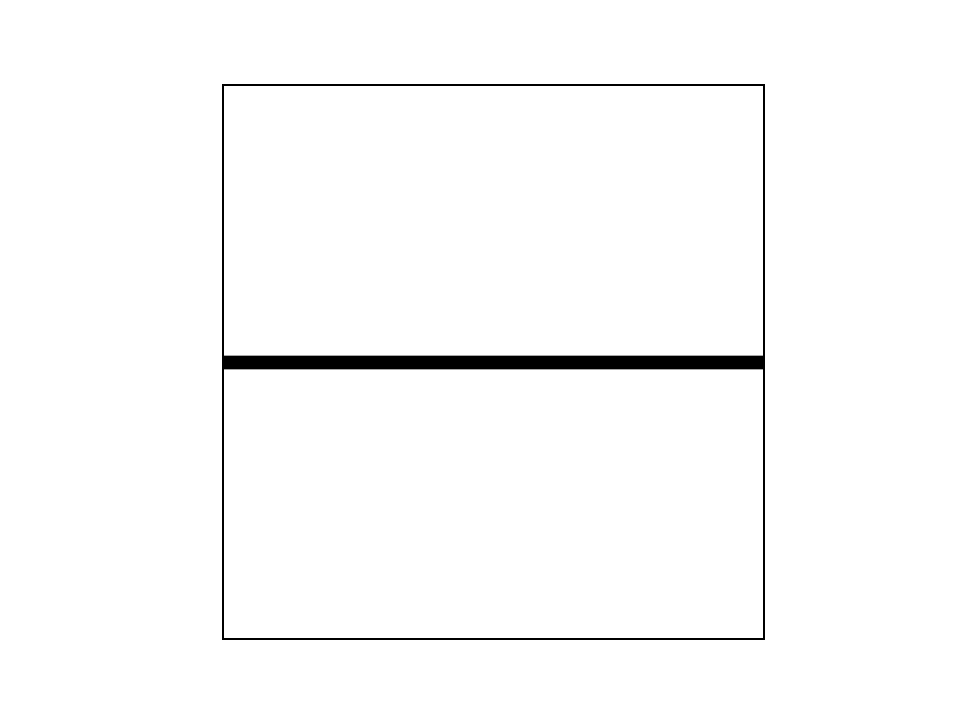}}
\subfigure[$k=6$]{\includegraphics[width=0.23\textwidth]{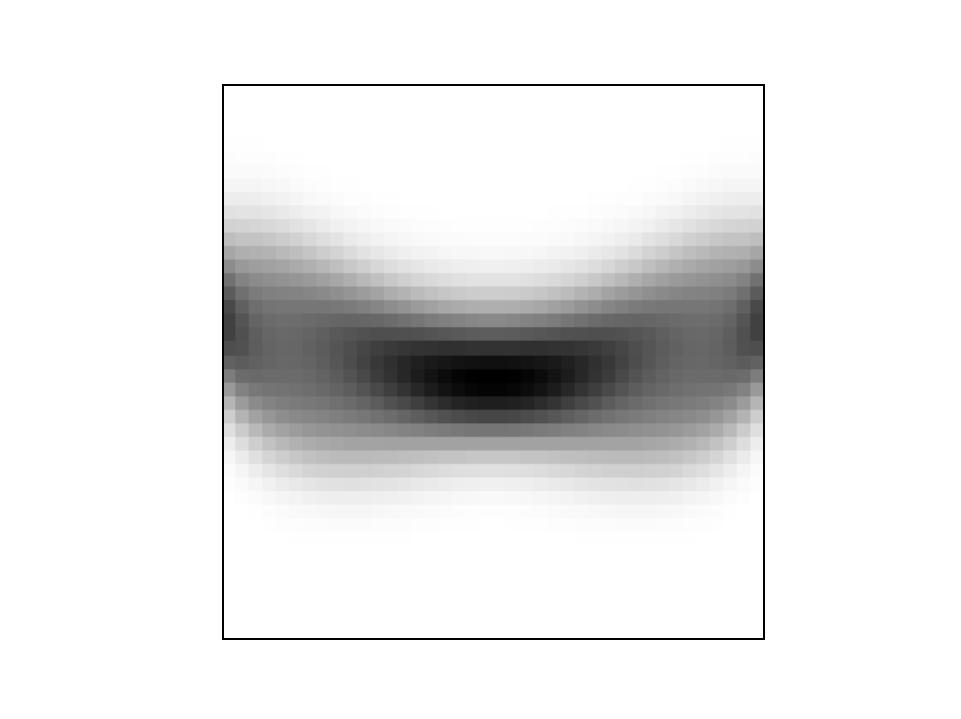}}
\subfigure[$k=11$]{\includegraphics[width=0.23\textwidth]{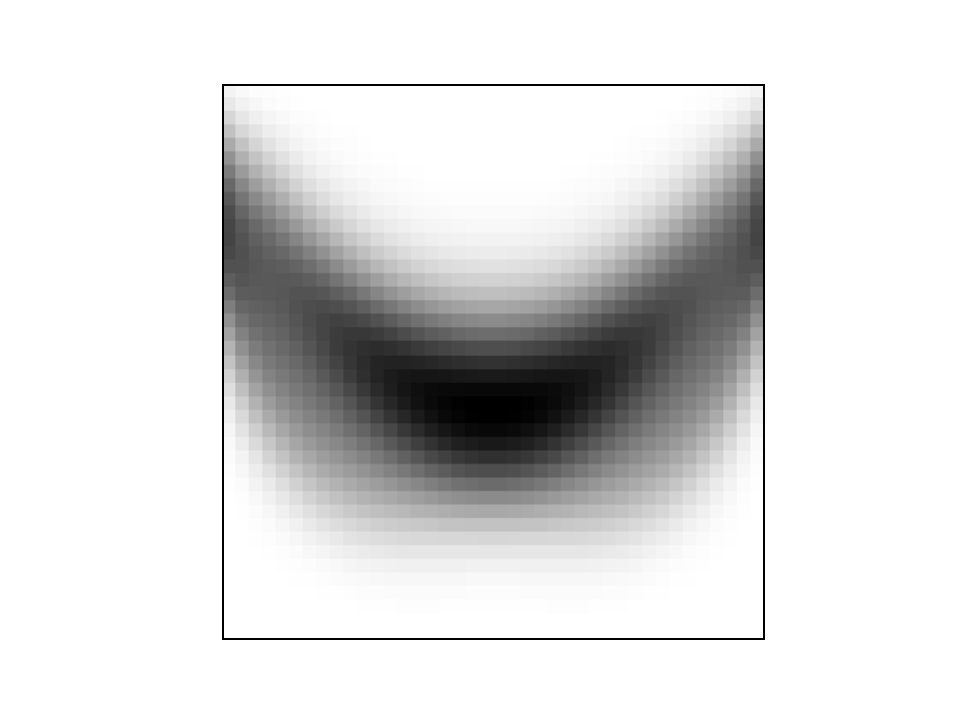}}
\subfigure[$k=16$]{\includegraphics[width=0.23\textwidth]{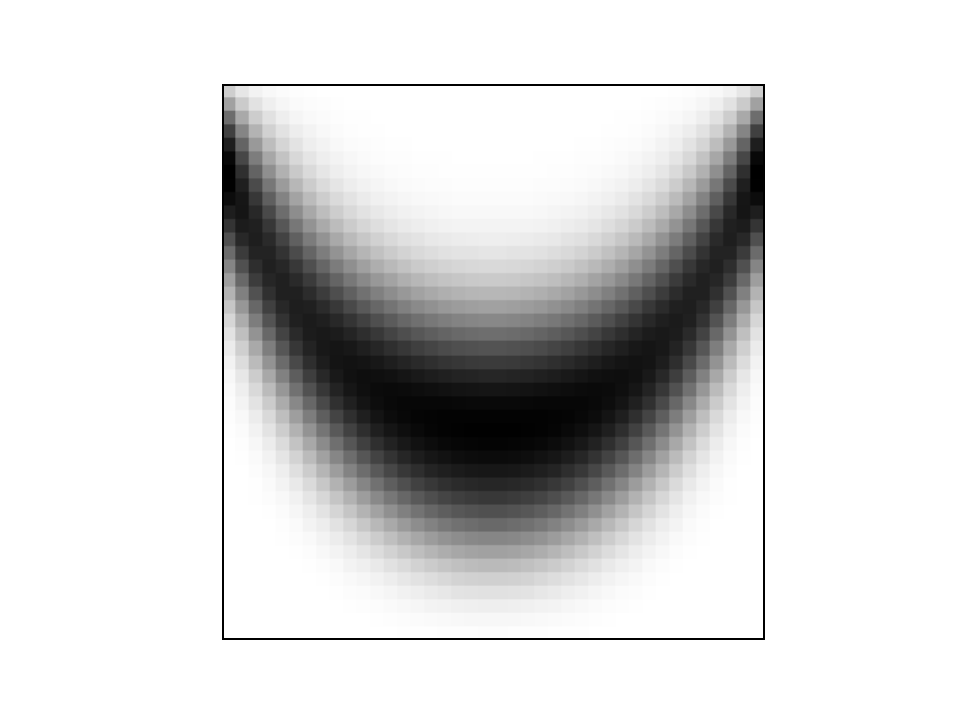}}
\subfigure[$k=20$]{\includegraphics[width=0.23\textwidth]{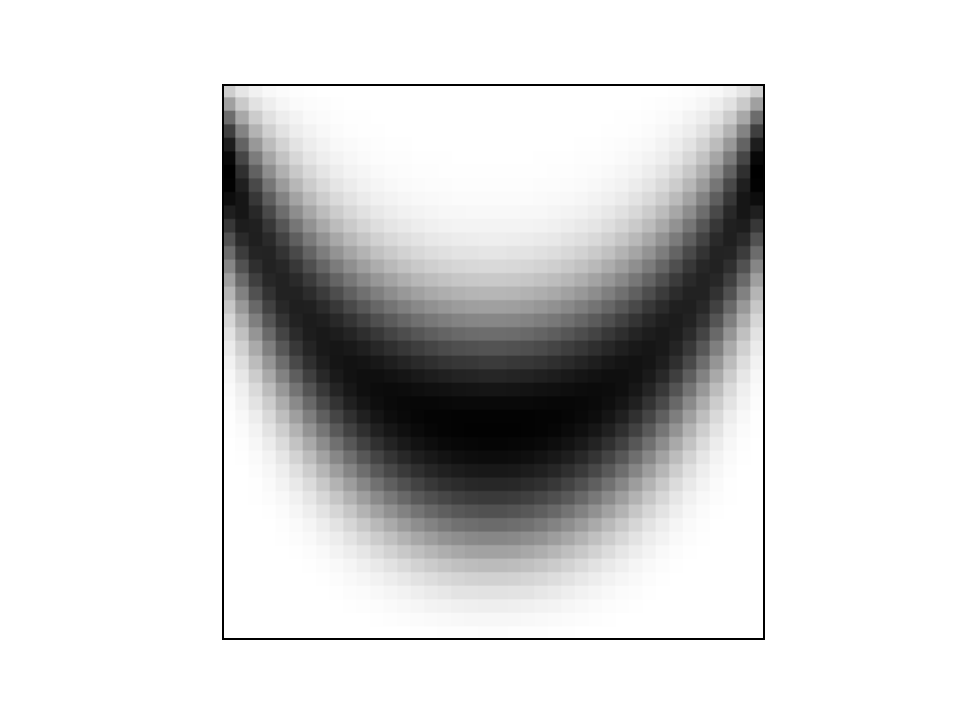}}
\subfigure[$k=25$]{\includegraphics[width=0.23\textwidth]{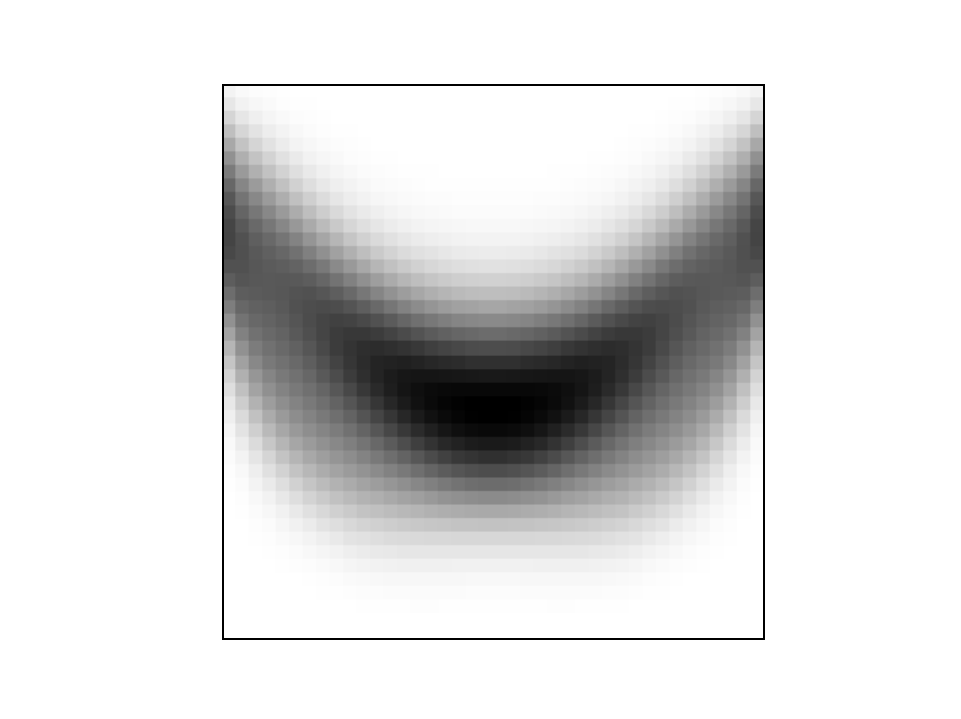}}
\subfigure[$k=30$]{\includegraphics[width=0.23\textwidth]{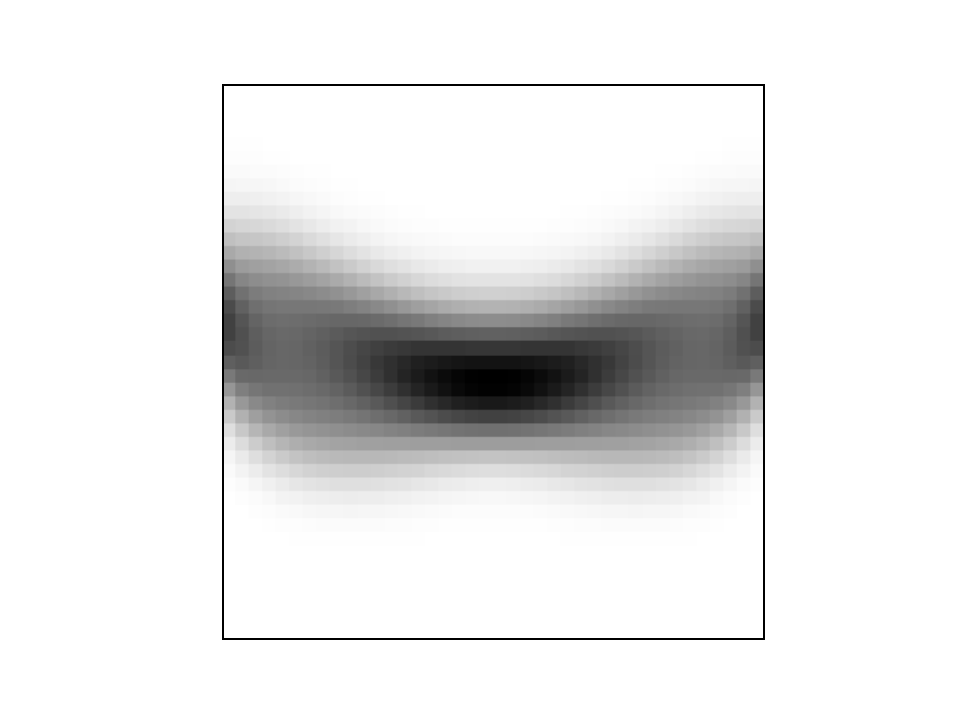}}
\subfigure[$k=35$]{\includegraphics[width=0.23\textwidth]{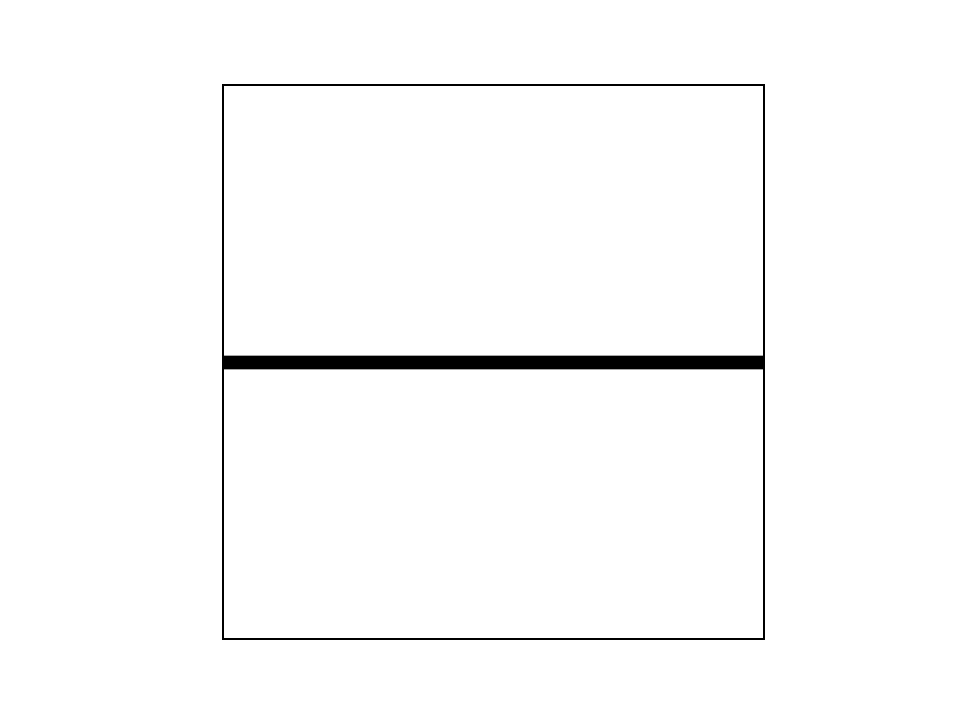}}
\caption{Fixed time marginals $(e_{t_k})_\# \bs{\mu}$ on the cone section $M\times [r_{min}, r_{max}]$ ($r_{min}=0.6$,  $r_{max}= 1.4$) for the non-deterministic associated with the boundary conditions specified by the map  in equation \eqref{eq:bcnondet}.} \label{fig:nondet2}
\end{figure}

\section{Outlook}\label{sec:conclusions}

There are several natural questions that were not addressed in this paper and that we reserve to future work: 
\begin{itemize}
\item \emph{Tight relaxation}. Brenier's relaxation of incompressible Euler is not tight in two dimensions but it is in three dimensions due to the work of Shnirelman \cite{shnirelman1994generalized}. It is an open question whether a similar result holds for the generalized problem studied in this paper.
The approximation results in section \ref{sec:examples} suggest that this is the case. In particular, we conjecture that our formulation is a tight relaxation of the $H(\mr{div})$ geodesic  problem in dimension $d \geq 2$.   
\end{itemize}

As for the generalized Euler solutions, a better understanding of the structure of minimizing generalized flows is of theoretical interest:
\begin{itemize}
\item \emph{Occurrence of singular solutions}. In this paper we did not fully characterize the emergence of singular solutions. Even for the case of rotation on the circle or on the torus, for example, we did not prove that these are the unique minimizers for the problem. In addition, such examples suggest that singular solutions appear whenever particles' displacement is sufficiently large. It would be interesting to give a full characterization in this direction, specifying when solutions are singular in terms of the boundary conditions and the dimension and geometry of the base space $M$;
\item \emph{Regularity of the pressure}. Brenier's result on the existence and uniqueness of the pressure in incompressible Euler was subsequently improved by Ambrosio and Figalli \cite{ambrosio2008regularity} in terms of regularity of the pressure field. It is natural to ask whether such a result can be extended to the generalized $H(\mr{div})$ geodesic problem. This question is related to the previous one, due to the fact that a sufficiently regular pressure field can prevent the occurrence of singular solutions as it can be deduced from the proofs in section \ref{sec:correspondence}. 
\end{itemize}

Addressing these theoretical questions will also guide the development of numerical schemes which are better suited to the formulation considered in this paper than methods based on entropic regularization. A viable alternative in this context is given by semi-discrete methods \cite{merigot2011multiscale} (see also the schemes developed for the incompressible Euler equations in \cite{merigot2016minimal,gallouet2017lagrangian}), whose use for the generalized $H(\mr{div})$ geodesic problem will also be studied in future work. 

\section*{Acknowledgments}
The research leading to these results has received funding from the People Programme (Marie
Curie Actions) of the European Union’s Seventh Framework Programme (FP7/2007-2013)
under REA grant agreement n.\
PCOFUND-GA-2013-609102, through the PRESTIGE
programme coordinated by Campus France.
The authors would also like to acknowledge the support from the project MAGA ANR-16-CE40-0014 (2016-2020).

\appendix

\section{Proof of lemma \ref{lem:equivconstr}}

\begin{proof}
Here we prove that the homogeneous marginal constraint can be enforced at each time rather than in integral form as in equation \eqref{eq:constraintsunb}. 

First, we prove that the constraint in equation \eqref{eq:constraintsunb} implies the one in  equation \eqref{eq:constraintsunbs}. In order to show this, for any fixed $t^*\in[0,T]$  and $f\in C^0(M)$, consider the following functionals 
\begin{equation}
\mc{F}(\mr{z}) \coloneqq r_{t^*}^2 f(x_{t^*}) \,, \qquad \mc{F}_n(\mr{z}) \coloneqq \int_0^T r_t^2 f(x_t) \delta_{n,t^*}(t)\,\ed t\,,
\end{equation}
where $\delta_{n,t^*}:[0,T] \rightarrow \mathbb{R}$, $n\in \mathbb{N}$, is a Dirac sequence of continuous functions converging to $\delta_{t^*}$. Then for any $\mr{z}\in \Omega$,  $\mc{F}_n(\mr{z})\rightarrow \mc{F}(\mr{z})$ as $n\rightarrow +\infty$. Moreover, using Jensen's inequality,
\begin{equation}
\begin{aligned}
\mc{F}_n(\mr{z}) & \leq \|f \|_{C^0} \int_0^T r_t^2 \delta_{n,t^*} \, \ed t \\ 
&\leq 2 \|f \|_{C^0} \left( r_0^2 + \int_0^T (r_t-r_0)^2\delta_{n,t^*} \, \ed t \right )\\
&\leq 2 \|f \|_{C^0} \left( r_0^2 + \int_0^T \dot{r}_t^2 \, \ed t\, \int_0^T t\,\delta_{n,t^*} \, \ed t \right )\\
&\leq 2 \|f \|_{C^0} \left( r_0^2 +  T\mc{A}(\mr{z})  \right )\,.
\end{aligned}
\end{equation}
The right-hand side is $\bs{\mu}$-integrable since $\mc{A}(\bs{\mu})<+\infty$ and because of the coupling constraint. Hence, we get the result by the dominated convergence theorem.

Similarly, if $f\in C^0([0,T]\times M)$, we take 
\begin{equation}
\mc{F}(\mr{z}) \coloneqq \int_0^T f(t,x_t) r_t^2 \, \ed t \,, \qquad \mc{F}_n(\mr{z}) \coloneqq \frac{T}{K} \sum_{k=0}^K f(t_k,x_{t_k}) r_{t_k}^2\,, 
\end{equation}
where $t_k \coloneqq kT/K$. Then for any $\mr{z}\in \Omega$,  $\mc{F}_n(\mr{z})\rightarrow \mc{F}(\mr{z})$ as $n\rightarrow +\infty$. Moreover,
\begin{equation}
\begin{aligned}
\mc{F}_n(\mr{z}) & \leq 2 \|f \|_{C^0} \left(r_0^2 + \frac{T}{K} \sum_{k=1}^K (r_{t_k}-r_0)^2\right) \\
& \leq 2 \|f \|_{C^0} \left(r_0^2 + \frac{T}{K}  \sum_{k=1}^K t_k\int_0^{t_k} \dot{r}_t^2 \, \ed t \right) \\
& \leq 2 \|f \|_{C^0} \left(r_0^2 + {T}^2  \mc{A}(\mr{z}) \right)\,, \\
\end{aligned}
\end{equation} 
and we can apply again the dominated convergence theorem to conclude the proof.
\end{proof}

\section{Proof of lemma \ref{lem:gv}}
\begin{proof}
Throughout this proof, most metric operations are performed with respect to the cone metric $g_\cone$, so to simplify the notation we will use $|\cdot|$ and $\langle \cdot, \cdot \rangle$ to denote, respectively, the norm and the inner product both on $T\cone$ and $\mathbb{R}^d$ according to the context. Moreover, given a vector field $u$ on the cone and a curve $t\mapsto p(t) \in \cone$, $\nabla_t u(p(t)) \coloneqq \nabla_{\dot{p}(t)} u(p(t))$ is the covariant derivative of $u$ at $p(t)$ with respect to the vector $\dot{p}(t)$.

Given a smooth solution $(\varphi,\lambda)$ and a fixed $x\in M$, let $\mr{z}^*=[\mr{x}^*,\mr{r}^*] \in \Omega$ be the curve defined by
$\mr{x}^*: t\rightarrow x^*_t \coloneqq \varphi_t(x)$ and $\mr{r}^*: t \rightarrow r^*_t \coloneqq \lambda_t(x)$.  We want to show that for any curve $\mr{z} \in AC^2([0,T];\cone)$ such that $\mr{z} \neq \mr{z}^*$,  $z_0 = z^*_0$ and $z_T = z^*_T$, we have $\mathcal{B}(\mr{z}) > \mathcal{B}(\mr{z}^*)$. We proceed in two steps: first we show that the inequality holds when $\mr{z}$ is smooth and when the geodesics between $z^*_t$ and  $z_t$ are smooth for all $t\in[0,T]$; then we derive sufficient conditions for which the inequality holds also for curves $\mr{z}$ which are farther away from $\mr{z}^*$.

Let $s\in[0,1] \mapsto c(t,s)\in \cone$ be a family of geodesics parameterized by $t\in[0,T]$ such that $c(t,0) = z^*_t$ and $c(t,1) = z_t$. In order for such geodesics to be smooth we need to assume
\begin{equation}\label{eq:dmcondition}
|x^*_t-x_t|<\pi\,, \qquad \forall\, t \in [0,T]\,. 
\end{equation}
Let $J(t,s) \coloneqq \partial_{t} c(t,s)$, which is a Jacobi field when restricted to any geodesic $c(t,\cdot)$ for any fixed $t\in[0,T]$.
Moreover, $J(t,0) = \dot{z}^*_t$ and $J(t,1) = \dot{z}_t$. Hence we want to show that 
\begin{equation} 
\int_0^T | J(t,0)|^2 - \Psi_p(t,c(t,0)) \,\ed t  \leq \int_0^T | J(t,1)|^2 - \Psi_p(t,c(t,1)) \,\ed t \,.
\end{equation}
Let $C \coloneqq \sup_{t\in[0,T]} \sup_{x\in M} | \mr{Hess}\, \Psi_p | $. The Taylor expansion of $\Psi_p(t,c(s,t))$ with respect to $s$ at $s=0$ yields
\begin{equation}\label{eq:taylor}
\Psi_p(t,c(t,1)) - \Psi_p(t,c(t,0)) - \langle \nabla \Psi_p(t,c(t,0)), \partial_s c(t,0) \rangle \leq \frac{C}{2} \int_0^1 | \partial_s c(t,s) |^2 \, \ed s \,.
\end{equation}
Since $\partial_s c(t,s) = 0$ at $t=0$ and $t=T$, by the Poincar\'e inequality we also have 
\begin{equation}
\int_0^T | \partial_s c(t,s)|^2 \, \ed t \leq \frac{T^2}{\pi^2} \int_0^T | \partial_t | \partial_s c(t,s)| |^2 \, \ed t \leq \frac{T^2}{\pi^2} \int_0^T | \nabla_t  \partial_s c(t,s)|^2  \, \ed t \,.
\end{equation}
Let $\dot{J}(t,s) \coloneqq \nabla_s  \partial_t c(t,s)$ and exchanging the order of derivatives in the equation above we obtain
\begin{equation}\label{eq:poincare}
\int_0^T | \partial_s c(t,s)|^2 \, \ed t \leq \frac{T^2}{\pi^2} \int_0^T |\dot{J}(t,s)|^2  \, \ed t \,.
\end{equation}
Integrating over $[0,T]$ equation~\eqref{eq:taylor} and using equation~\eqref{eq:poincare} we get
\begin{equation}\label{eq:taylor1}
\int_0^T \Psi_p(t,c(t,1)) - \Psi_p(t,c(t,0)) - \langle \nabla \Psi_p(t,c(t,0)), \partial_s c(t,0) \rangle \, \ed t \leq \frac{C T^2}{2 \pi^2} \int_0^1 | \dot{J}(t,s) |^2 \, \ed s \,.
\end{equation}
Consider the term involving the gradient of $\Psi_p$. Substituting $\nabla \Psi_p(t,c(t,0)) = - 2\nabla_t \dot{z}^*_t = - 2\nabla_t J(t,0)$, integrating by parts in $t$, and exchanging the order of derivatives for this term yields
\begin{equation}\label{eq:taylor2}
\int_0^T \Psi_p(t,c(t,1)) - \Psi_p(t,c(t,0)) -2 \langle J(t,0), \dot{J}(t,0) \rangle \, \ed t \leq \frac{C T^2}{2 \pi^2} \int_0^1 | \dot{J}(t,s) |^2_{g_\cone} \, \ed s \,.
\end{equation}
Let $f(s) \coloneqq \int_0^T |J(t,s)|^2 \, \ed t$, then
\begin{equation}
f'(0) = \int_0^T 2\langle J(t,0), \dot{J}(t,0) \rangle \, \ed t \, ,
\end{equation}
and
\begin{equation}\label{eq:ftaylor}
\begin{aligned}
f(1) - f(0) - f'(0) &= \int_0^1 (1-s) f''(s) \, \ed s\\
& = \int_0^1 \int _0^T 2 (1-s) (| \dot{J}(t,s) |^2 + \langle J(t,s), \nabla_s \dot{J}(t,s)  \rangle ) \, \ed t \ed s \\
& \geq \int_0^1 \int _0^T 2 (1-s) | \dot{J}(t,s) |^2  \, \ed t \ed s
 \, ,
\end{aligned}
\end{equation}
where the last inequality is due to the fact that for a Jacobi field $J(t,s)$, 
\begin{equation}
\nabla_s \dot{J}(t,s) = - R(J(t,s), \partial_s c(t,s) ) \partial_s c(t,s) \,,  
\end{equation}
where $R$ is the Riemann tensor, which for any tangent vectors $X$ and $Y$ at the same point on the cone over a flat manifold satisfies $\langle X,  R(X, Y )Y \rangle\leq 0$ . Moreover since the Jacobi fields are finite dimensional and $[0,T]\times M$ is compact, there exists a constant $C_0>0$ such that
\begin{equation}
f(1) - f(0) - f'(0) \geq \frac{C_0}{2} \int_0^1 \int _0^T  | \dot{J}(t,s) |^2  \, \ed t \ed s
 \, .
\end{equation}
Combining this with \eqref{eq:taylor2} and rearranging terms we obtain
\begin{equation}
\begin{aligned}
\left(\frac{C_0}{2} - \frac{CT^2}{2\pi^2}  \right) \int_0^1 \int _0^T  | \dot{J}(t,s) |^2  \, \ed t \ed s &+   \int_0^T | J(t,0)|^2- \Psi_p(t,c(t,0)) \,\ed t  \\ & \leq \int_0^T | J(t,1)|^2 - \Psi_p(t,c(t,1)) \,\ed t \,.
\end{aligned}
\end{equation}
Because of the inequality \eqref{eq:hessassumption}, shows that $\mr{z}^*$ is minimizing among all paths $\mr{z}\in \Omega$ which satisfy \eqref{eq:dmcondition} and it is unique when the inequality is strict. 
Note that when $M=S^1_1$, the circle of unit radius, we can identify $\cone$ with $\mathbb{R}^2$ and condition \eqref{eq:dmcondition} is not necessary. Furthermore, since geodesics are straight lines with constant speed, from equation \eqref{eq:ftaylor} we find $C_0 = 2$. This concludes the proof for the case $M=S^1_1$.

Now, assume that for all $x\in M$, $d_\cone(z_{t_0}, z_{t_1}) \leq \epsilon$, for all $t_0,t_1\in[0,T]$. Let
\begin{equation}
B_\delta \coloneqq \bigcap_{t\in [0,T]} \{ q \in \cone \,;\, d_{\cone}(q,z^*_t) \leq \delta \}\,,
\end{equation}
and take $\epsilon < \delta \coloneqq  \frac{r_{min}}{2}$, where $r_{min} \coloneqq \min_{(t,x)\in [0,T]\times M } \lambda_t(x)$. For any $q\in B_\delta$ and any $t\in[0,T]$ the geodesic path between $q$ and $z_t^*$ cannot pass through the apex, since otherwise the distance between the two points should be at least equal to $r_{min}$. In other words, we must have $d_\cone(q,z_t^*)<\pi$ and the path $\mr{z}^*$ is minimizing among all paths $\mr{z}\in \Omega$ contained in $B_\delta$. Moreover, the geodesic path from $z^*_0$ to $z^*_T$ is also included in $B_\delta$. 
Consider the following quantity
\begin{equation}
E(\delta,q, T^{*}) \coloneqq \inf_{p\in \partial B_\delta / \mathcal{C}(\partial M)}  \left\{\inf_{\mr{z}\in AC^2([0,T^*];\cone)} \left\{ \int_0^{T^{*}} | \dot{z}_t |^2 - \Psi_p(t,z_t) \,\ed t \, ;  \, z_0 = q \in B_\delta \,, \, z_T = p \right\}\right\}\,,
\end{equation}
which is the infimum action over the interval $[0,T^{*}]$ among paths starting at a point $q \in B_\delta$ and reaching its boundary $\partial B_\delta$ (but not points on $\partial M$) at time $T^{*}$. Given any path $\mr{z}$ such that $z_0 = z^*_0$ and $z_T = z^*_T$ not contained in $B_\delta$, we have
\begin{equation}\label{eq:lowerbound}
\mathcal{B}(\mr{z}) \geq \inf_{T_1 +T_2 \leq T} ( E(\delta, z^*_0 , T_1) + E(\delta, z^*_T , T_2) ) \,,
\end{equation}
and we want to show that $\mathcal{B}(\mr{z}) > \mathcal{B}(\mr{z}^*)$. We have
\begin{equation}
\begin{aligned}
E(\delta,z^*_0,T_1) & \geq \inf_p \inf_{\mr{z}} \int_0^{T_1} | \dot{z}_t |^2 \,\ed t - (r_{max} + \delta)^2 C T_1 \\
& \geq \frac{(\delta - \epsilon)^2}{T_1} - (r_{max} + \delta)^2 C T_1\,,
\end{aligned}
\end{equation}
where $C \coloneqq \sup_{(t,x)\in [0,T]\times M} |P(t,x)|$ and $r_{max} \coloneqq \max_{(t,x)\in [0,T]\times M} \lambda_t(x)$. Hence, by equation \eqref{eq:lowerbound},
\begin{equation}
\mathcal{B}(\mr{z}) \geq  \frac{4 (\delta - \epsilon)^2}{T} - (r_{max} + \delta)^2 C T\,.
\end{equation}
On the other hand, we can deduce an upper bound for $\mathcal{B}(\mr{z}^*)$ using the geodesic path $\mr{z}^g$ between $z_0^*$ and $z_T^*$, yielding
 \begin{equation}
\mathcal{B}(\mr{z}) \leq \int | \dot{z}^g_t |^2 \,\ed t + r_{max}^{2} C T \leq \frac{ \epsilon^2}{T} + r_{max}^2 C T\,.
\end{equation}
Therefore we find the following sufficient condition for optimality of the path $\mr{z}^*$:
\begin{equation}\label{eq:estdelta}
[r_{max}^2 + (r_{max} + \delta)^2] C T \leq \frac{4 (\delta - \epsilon)^2}{T} - \frac{ \epsilon^2}{T}\,.
\end{equation}
The right-hand side is positive if $\epsilon<2\delta/3$. Hence taking $\epsilon = \delta/2$ and substituting $\delta = \frac{r_{min}}{2}$,
\begin{equation}\label{eq:estdelta1}
\left[r_{max}^2 + \left(r_{max} + \frac{r_{min}}{2}\right)^2\right] C T \leq \frac{3 r_{min}^2 }{8T}\,.
\end{equation}
This is the same as equation \eqref{eq:rhoassumption}. For uniqueness we only need to substitute the inequality in \eqref{eq:estdelta1} by a strict one, which concludes the proof.
\end{proof}

\bibliographystyle{plain}      
\bibliography{refs}   

\end{document}